\documentclass[11pt]{article}

\usepackage[a4paper,margin=1in]{geometry}
\usepackage{amsmath,amssymb,amsthm,mathtools}
\usepackage{enumitem}
\usepackage{booktabs}
\usepackage{microtype}
\usepackage{mathrsfs}
\usepackage{tikz}
\usepackage{algorithm}
\usepackage{algorithmic}
\usepackage[numbers,sort&compress]{natbib}
\usepackage[hidelinks]{hyperref}
\providecommand{\doi}[1]{\href{https://doi.org/#1}{\nolinkurl{https://doi.org/#1}}}

\newtheorem{theorem}{Theorem}[section]
\newtheorem{proposition}[theorem]{Proposition}
\newtheorem{corollary}[theorem]{Corollary}
\newtheorem{lemma}[theorem]{Lemma}
\newtheorem{definition}[theorem]{Definition}
\newtheorem{remark}[theorem]{Remark}

\newtheorem{example}[theorem]{Example}
\newtheorem*{explanatoryremark}{Remark}
\newtheorem*{explanatorycorollary}{Corollary}

\title{A Homological Decomposition for the Dimension and Dimensional Stability of Polynomial Spline Spaces over T-Meshes}
\author{Bingru Huang
\\[0.5ex]\small School of Mathematical Sciences, University of Science and Technology of China,\\ \small Hefei, 230026, People's Republic of China}
\date{}

\begin{document}
\maketitle

\noindent\textbf{Abstract.}

We study the dimension and dimensional stability of polynomial spline spaces of bi-degree $(m,m')$ with prescribed smoothness orders over planar T-meshes. The homological dimension formula writes the spline dimension as the sum of an Euler characteristic term and a correction term.  For a fixed ordered bi-degree, the Euler characteristic term is determined by the mesh structure and the prescribed smoothness orders.  The correction term can be written as a quotient of coefficient spaces attached to maximal interior segments (MISs).  We prove a weighted deletion theorem: when the available vertex relations generate the coefficient space of an MIS, its summand can be removed from the quotient without changing the correction term.  This operation changes neither the T-mesh nor its chain complexes.  Repeating the deletion leaves a weighted completely non-diagonalizable component (CNDC).  We prove that the weighted CNDC is independent of the order in which eligible MISs are removed and is the same for corresponding pairs in the structural class.  The correction term can therefore be represented using only the MISs in the weighted CNDC, while all relations from the original T-mesh are retained.  Dimensional stability is then equivalent to constancy of the dimension of the remaining relation space.  In particular, an empty weighted CNDC is sufficient for stability.  We also derive an upper bound for the remaining correction term and compare it with Mourrain's upper bound based on all MISs.

\medskip
\noindent\textbf{Keywords.} T-mesh; polynomial spline; dimension; deletion of MIS summands; weighted CNDC; smoothness distribution; dimensional stability

\section{Introduction}

Polynomial splines over T-meshes provide a flexible framework for local refinement.  The presence of T-junctions avoids the global propagation of tensor-product refinement, but it also makes the dimension problem more difficult.  The dimension is needed before a basis can be constructed and before the change of degrees of freedom under refinement can be understood.

For a general T-mesh, the spline dimension may depend on the positions of the mesh lines and not only on the incidence relations among its cells, edges, and vertices.  This phenomenon is usually called dimensional instability; see \citet{LiChen2011}, \citet{BerdinskyOhKimMourrain2012}, and \citet{LiWang2019}.  A dimension theory should therefore distinguish the data fixed by the mesh structure from the part that may still vary with the geometric realization.

Several methods have been developed for the dimension problem, including the B-net method \citep{DengChenFeng2006}, the smoothing cofactor method \citep{LiWangZhang2006,LiDeng2016,ZengDengLiDeng2015}, the minimal determining set method \citep{SchumakerWang2012,LaiSchumaker2007}, the space embedding method \citep{DengChenJin2013}, and homological algebra \citep{Billera1988,SchenckStillman1997,Mourrain2014}.  The homological method is particularly useful in the present setting because it gives
\[
\dim \mathcal S_{m,m'}^{\mathbf r}(\mathcal T)
=
\chi\!\left(\mathcal Q_{m,m'}^{\mathbf r}(\mathcal T^\circ)\right)
+
\dim H_0\!\left(\mathcal I_{m,m'}^{\mathbf r}(\mathcal T^\circ)\right).
\]
The first term is the Euler characteristic of the quotient complex.  For a fixed ordered bi-degree and fixed smoothness orders, it is determined by the numbers of cells, horizontal and vertical interior edges, and interior vertices.  The second term is the correction homology and records the remaining relations among the smoothness conditions.

Two different steps should be distinguished.  To study dimensional stability, we first compare geometric realizations with the same mesh structure and the same smoothness order on corresponding edges.  Their Euler characteristic terms are equal.  After fixing one realization $\mathcal T$, we simplify only the quotient representation of its correction homology.  This deletion is algebraic: it does not remove an edge from $\mathcal T$, merge cells, or change the complexes $\mathcal Q_{m,m'}^{\mathbf r}(\mathcal T^\circ)$ and $\mathcal I_{m,m'}^{\mathbf r}(\mathcal T^\circ)$.  The Euler characteristic term is therefore always computed on the original T-mesh.

Mourrain \citep{Mourrain2014} represents the correction homology as a quotient whose summands are indexed by maximal interior segments.  Each MIS is assigned a finite-dimensional polynomial coefficient space, and the vertices on the MISs give relations among these spaces.  This representation allows us to test whether the coefficient space of one MIS is already generated by the available vertex relations.  If so, that MIS summand can be removed without changing the quotient.

This idea is related to the decomposition of a T-connected component in the highest-order smoothness case \citep{HuangChen2024}, where a diagonalizable part is separated from a completely non-diagonalizable component.  Here we extend the deletion argument to arbitrary bi-degree and to smoothness distributions that may vary between maximal segments.  The contribution of each vertex is determined by the local degree and smoothness order.

We first define the structural class in which dimensional stability is studied.  We then prove a deletion theorem for this quotient representation of the correction term.  Repeating this deletion gives the weighted CNDC.  The correction homology can be represented using the MISs that remain, with all relations from the original T-mesh retained.  The stability problem is thereby reduced to the dimension of the remaining relation space.

The main contributions are as follows.
\begin{enumerate}[label=\textup{(\roman*)},leftmargin=*]
\item We define the structural class by requiring the same mesh structure and the same smoothness order on corresponding edges.  We prove that the Euler characteristic term is constant on this class.  We also show that the later deletion process, which is performed on a fixed T-mesh, does not change this term.

\item We give a quotient representation of the correction homology whose summands are indexed by MISs.  For any subset of MISs, we define the corresponding correction space by retaining every relation induced by the original T-mesh.

\item We prove a weighted deletion theorem.  If the relative weight of an MIS reaches the threshold determined by its direction, its coefficient summand can be removed without changing the correction space.

\item We give an iterative deletion algorithm that does not require a total order in advance.  Its final remaining set is the weighted CNDC.  We prove that this set is independent of the order in which eligible MISs are removed and is the same for corresponding pairs in the structural class.

\item We rewrite the dimension formula using the weighted CNDC and reduce dimensional stability to constancy of the dimension of the remaining relation space.  An empty weighted CNDC is sufficient for stability.  We also derive an upper bound for the remaining correction term.  For an order obtained from the deletion history, this bound has the same value as Mourrain's corresponding bound over all MISs, because the deleted MISs contribute zero.
\end{enumerate}

A nonempty weighted CNDC does not by itself imply dimensional instability.  It identifies the MISs that may contribute to the correction term, but the value of that term also depends on the dimension of the remaining relation space.  We include a highest-order smoothness example from \citet{LiChen2011}, also discussed in \citet{HuangChen2024}, in which the weighted CNDC is unchanged while the dimension of the remaining relation space and the spline dimension vary with the line coordinates.

The rest of the paper is organized as follows.  Section~\ref{sec:preliminaries} recalls T-meshes, smoothness distributions, spline spaces, and the homological complexes, and then defines the structural class used for comparison.  Section~\ref{sec:decomposition} proves the deletion theorem, constructs the weighted CNDC, and studies the remaining relation space, dimensional stability, and the upper bound.  Section~\ref{sec:conclusion} concludes the paper.

\section{Preliminaries}
\label{sec:preliminaries}

This section recalls the notation used in the paper.  We first define T-meshes, maximal segments, structural isomorphisms, and smoothness distributions.  We then introduce the spline space and the three chain complexes in the homological dimension formula.  The notation follows \citet{Mourrain2014} and the formulation for mixed smoothness in \citet{ToshniwalVillamizar2020}; related constructions appear in \citet{BraccoLycheManniRomanSpeleers2016}.

\subsection{T-meshes, smoothness distributions, and spline spaces}
\label{subsec:tmesh-spline}

Let $\Omega\subset \mathbb R^2$ be a finite union of closed axis-aligned rectangles whose
interiors are pairwise disjoint.  We assume throughout that $\Omega$ is simply connected
and that its interior $\Omega^\circ$ is connected.  We first specify the cells, edges,
and vertices of the T-mesh, since they appear in both the definition of the spline space
and the Euler characteristic term in the dimension formula.

\begin{definition}[T-mesh]
\label{def:t-mesh}
A planar T-mesh on $\Omega$ is a triple
\[
    \mathcal T=(\mathcal T_2,\mathcal T_1,\mathcal T_0)
\]
with the following properties:
\begin{itemize}
    \item $\mathcal T_2$ is the finite set of closed axis-aligned rectangles, called
    cells or $2$-cells;
    \item $\mathcal T_1=\mathcal T_1^h\cup \mathcal T_1^v$ is a finite set of closed
    horizontal and vertical segments contained in
    $\bigcup_{\sigma\in\mathcal T_2}\partial\sigma$, called edges or $1$-cells;
    \item $\mathcal T_0:=\bigcup_{\tau\in\mathcal T_1}\partial\tau$ is the finite set
    of endpoints of edges, called vertices or $0$-cells;
    \item for each $\sigma\in\mathcal T_2$, the boundary $\partial\sigma$ is a finite
    union of elements of $\mathcal T_1$;
    \item if $\sigma,\sigma'\in\mathcal T_2$ and $\sigma\neq\sigma'$, then
    $\sigma\cap\sigma'=\partial\sigma\cap\partial\sigma'$ is a finite union of elements
    of $\mathcal T_1\cup\mathcal T_0$;
    \item if $\tau,\tau'\in\mathcal T_1$ and $\tau\neq\tau'$, then
    $\tau\cap\tau'\subseteq\mathcal T_0$.
\end{itemize}
The elements of $\mathcal T_1$ meeting $\Omega^\circ$ are called interior edges and form
the set $\mathcal T_1^\circ$.  We write
\[
    \mathcal T_1^{\circ,h}:=\mathcal T_1^\circ\cap \mathcal T_1^h,
    \qquad
    \mathcal T_1^{\circ,v}:=\mathcal T_1^\circ\cap \mathcal T_1^v .
\]
The vertices lying in $\Omega^\circ$ are called interior vertices and form the set
$\mathcal T_0^\circ$.
\end{definition}

A segment of $\mathcal T$ is a connected union of edges lying on the same straight line.
For an interior edge $\tau\in\mathcal T_1^\circ$, we denote by $\rho(\tau)$ the maximal
segment consisting of interior edges, lying on the same line as $\tau$, and containing
$\tau$.  The set of all such maximal segments is denoted by $\operatorname{MS}(\mathcal T)$.
An element of $\operatorname{MS}(\mathcal T)$ is called a cross-cut if both of its endpoints lie on $\partial\Omega$, a ray if exactly one endpoint lies on $\partial\Omega$, and a maximal interior segment, abbreviated by MIS, if it does not meet $\partial\Omega$.  We write
\[
\operatorname{MS}(\mathcal T)
=
\operatorname{CC}(\mathcal T)
\,\dot\cup\,
\operatorname{Ray}(\mathcal T)
\,\dot\cup\,
\operatorname{MIS}(\mathcal T),
\qquad
\operatorname{MIS}(\mathcal T)
=
\operatorname{MIS}_h(\mathcal T)\cup\operatorname{MIS}_v(\mathcal T).
\]
A boundary maximal segment is a maximal connected union of boundary edges lying on the same straight line.  The set of boundary maximal segments is denoted by
$\operatorname{BMS}(\mathcal T)$.  When comparing T-meshes, we include both interior and boundary maximal segments and write
\[
\widehat{\operatorname{MS}}(\mathcal T)
:=
\operatorname{CC}(\mathcal T)
\,\dot\cup\,
\operatorname{Ray}(\mathcal T)
\,\dot\cup\,
\operatorname{MIS}(\mathcal T)
\,\dot\cup\,
\operatorname{BMS}(\mathcal T).
\]
The notation $\operatorname{MS}(\mathcal T)$ continues to refer only to maximal segments made of interior edges.  In the quotient representation introduced below, only the MISs give direct summands.

\paragraph{Assumptions on the T-mesh.}
We restrict attention to reduced T-meshes satisfying the following conditions.  No vertex is inserted merely to divide a straight edge into two collinear edges.  Every vertex lies on exactly one horizontal and one vertical member of $\widehat{\operatorname{MS}}(\mathcal T)$.  The edges of $\mathcal T_1$ are exactly the closed subsegments between consecutive vertices on these members.  In particular, an interior maximal segment and a boundary maximal segment of the same direction cannot meet collinearly at a common endpoint.  Under these assumptions, the maximal-segment types, their intersections, and the order of parallel maximal segments determine the one-skeleton without ambiguity.

\begin{figure}[t]
\centering
\begin{tikzpicture}[scale=0.78, every node/.style={font=\small}]
    % outer boundary
    \draw[thick] (0,0) rectangle (6,5);

    % interior edges
    \draw[thick] (2,0) -- (2,5);
    \draw[thick] (2,1) -- (6,1);
    \draw[thick] (2,4) -- (6,4);
    \draw[thick] (4,1) -- (4,4);
    \draw[thick] (2,2.5) -- (4,2.5);

    % marked vertices
    \fill (2,2.5) circle (2pt);
    \fill (4,2.5) circle (2pt);
    \fill (4,1) circle (2pt);
    \fill (4,4) circle (2pt);

    \node[left] at (2,2.5) {$v_1$};
    \node[below right] at (4,2.5) {$v_2$};
    \node[below right] at (4,1) {$v_3$};
    \node[above right] at (4,4) {$v_4$};
\end{tikzpicture}
\caption{A T-mesh example.}
\label{fig:basic-tmesh}
\end{figure}

In Fig.~\ref{fig:basic-tmesh}, the horizontal segment $[v_1,v_2]$ and the vertical
segment $[v_3,v_2]\cup[v_2,v_4]$ are MISs.  The vertex $v_2$ lies on two MISs and is
therefore a multi-vertex relative to this connected collection of MISs, while
$v_1,v_3,v_4$ lie on only one MIS and are mono-vertices.

In the smoothing cofactor literature~\cite{ZengDengLiDeng2015,HuangChen2024}, maximal segments made of interior edges are classified as cross-cuts, rays, and T-large edges, also called T $l$-edges.  In the present homological terminology, a T-large edge is an MIS.  A connected collection of MISs together with their intersection vertices corresponds to a T-connected component.  Relative to such a collection, an interior vertex lying on exactly one MIS is called a mono-vertex, while an interior vertex lying on two MISs is called a multi-vertex.

To compare two geometric realizations, we need a precise rule for deciding when they have the same mesh structure.  The rule must preserve enough information to recover the vertices, edges, and cells, because the Euler characteristic term will later be compared across this class.  For this reason, the definition uses $\widehat{\operatorname{MS}}(\mathcal T)$ and includes the boundary maximal segments.

For $\rho\in\widehat{\operatorname{MS}}(\mathcal T)$, let
$\operatorname{dir}(\rho)\in\{h,v\}$ denote its direction.  If $\rho$ is
horizontal, let $\operatorname{cor}(\rho)$ be its constant $t$-coordinate; if
$\rho$ is vertical, let $\operatorname{cor}(\rho)$ be its constant
$s$-coordinate.  We write
\[
\widehat{\operatorname{MS}}(\mathcal T)
=
\widehat{\operatorname{MS}}_h(\mathcal T)
\cup
\widehat{\operatorname{MS}}_v(\mathcal T).
\]

\begin{definition}[Structural isomorphism]
\label{def:structural-isomorphism}
Let $\mathcal T$ and $\mathcal T'$ be two reduced T-meshes.  A bijection
\[
\alpha:
\widehat{\operatorname{MS}}(\mathcal T)
\longrightarrow
\widehat{\operatorname{MS}}(\mathcal T')
\]
is called a structural isomorphism if the following conditions hold:
\begin{enumerate}[label=\textup{(\roman*)},leftmargin=*]
\item The four maximal-segment types are preserved:
\[
\begin{aligned}
\alpha\bigl(\operatorname{CC}(\mathcal T)\bigr)
&=\operatorname{CC}(\mathcal T'),&
\alpha\bigl(\operatorname{Ray}(\mathcal T)\bigr)
&=\operatorname{Ray}(\mathcal T'),\\
\alpha\bigl(\operatorname{MIS}(\mathcal T)\bigr)
&=\operatorname{MIS}(\mathcal T'),&
\alpha\bigl(\operatorname{BMS}(\mathcal T)\bigr)
&=\operatorname{BMS}(\mathcal T').
\end{aligned}
\]

\item Incidence is preserved: for all distinct
$\rho,\eta\in\widehat{\operatorname{MS}}(\mathcal T)$,
\[
\rho\cap\eta\neq\varnothing
\quad\Longleftrightarrow\quad
\alpha(\rho)\cap\alpha(\eta)\neq\varnothing.
\]
When the intersection is nonempty, it is a vertex by the assumptions above, and the induced vertex correspondence is defined by
\[
\alpha_0(\rho\cap\eta)
:=
\alpha(\rho)\cap\alpha(\eta).
\]

\item Parallelism and relative coordinate order are preserved.  For
$\rho,\eta\in\widehat{\operatorname{MS}}(\mathcal T)$,
\[
\rho\parallel\eta
\quad\Longleftrightarrow\quad
\alpha(\rho)\parallel\alpha(\eta),
\]
and, whenever these segments are parallel,
\[
\operatorname{cor}(\rho)<\operatorname{cor}(\eta)
\quad\Longleftrightarrow\quad
\operatorname{cor}(\alpha(\rho))
<
\operatorname{cor}(\alpha(\eta)).
\]
\end{enumerate}
If such a map exists, then $\mathcal T$ and $\mathcal T'$ are structurally isomorphic.  The structural isomorphism is direction-preserving if it maps horizontal maximal segments to horizontal maximal segments and vertical maximal segments to vertical maximal segments.
\end{definition}

\begin{proposition}[Maps induced by a structural isomorphism]
\label{prop:induced-tmesh-isomorphism}
A structural isomorphism $\alpha$ either preserves the horizontal and vertical families globally or interchanges them globally.  Moreover, the induced vertex map $\alpha_0$ extends uniquely to bijections
\[
\alpha_1:\mathcal T_1\longrightarrow\mathcal T_1',
\qquad
\alpha_2:\mathcal T_2\longrightarrow\mathcal T_2'
\]
that preserve incidences and boundary and interior edge types, and such that
$\alpha_2$ maps the boundary cycle of each cell to the boundary cycle of its
image cell.  Thus
$(\alpha_2,\alpha_1,\alpha_0)$ is an isomorphism between the relative CW
complexes determined by the two T-meshes.
\end{proposition}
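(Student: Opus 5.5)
The plan has three parts: show that $\alpha$ respects directions globally, construct $\alpha_1$ from the vertex order along each maximal segment, and then construct $\alpha_2$ from the faces of the plane graph.

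\textbf{Directions.} I will show that $\alpha$ either preserves or swaps the horizontal and vertical families on all of $\widehat{\operatorname{MS}}(\mathcal T)$. By condition (iii), $\alpha$ preserves parallelism, so it maps each parallel class into a single parallel class. There are exactly two classes, $\widehat{\operatorname{MS}}_h$ and $\widehat{\operatorname{MS}}_v$, and both are nonempty in each mesh, since $\partial\Omega$ contains boundary maximal segments of both directions. Because $\alpha$ is a bijection, it induces a bijection between the two-element sets of classes. It is therefore either the identity on directions or the swap.

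\textbf{Edges.} I will use the reducedness assumptions to describe each member $\rho\in\widehat{\operatorname{MS}}(\mathcal T)$ combinatorially.
\begin{itemize}
\item Every vertex on $\rho$ is $\rho\cap\eta$ for a unique member $\eta$ of the other direction.
\item The vertices of $\rho$ are linearly ordered along $\rho$, and this order is the order of $\operatorname{cor}(\eta)$ over those transversal $\eta$ meeting $\rho$.
\item The edges contained in $\rho$ are exactly the segments between consecutive vertices in this order.
\end{itemize}
Condition (ii) ensures that $\alpha_0$ is well defined and bijective on vertices. It is injective because each vertex lies on exactly one horizontal and one vertical member, and surjective by applying the same construction to $\alpha^{-1}$. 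Condition (ii) also shows that $\alpha_0$ maps the vertex set of $\rho$ onto the vertex set of $\alpha(\rho)$. Condition (iii) then shows that this bijection preserves the linear order along the segment, possibly reversing it if directions are swapped, but it is still order-preserving or order-reversing. Hence consecutive pairs go to consecutive pairs. I will define $\alpha_1$ of the edge $[p,q]\subset\rho$ to be $[\alpha_0(p),\alpha_0(q)]\subset\alpha(\rho)$. This is forced by incidence with the endpoints, which gives uniqueness. It is bijective, its inverse being built the same way from $\alpha^{-1}$. It preserves interior and boundary type, since by (i) boundary maximal segments go to boundary maximal segments and interior ones to interior ones.

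\textbf{Cells (the main obstacle).} I will first realize the one-skeleta as plane graphs. At each vertex, the cyclic order of incident edges is determined by the directions of those edges: up to four edges, each labelled by direction (east/west along the horizontal member, north/south along the vertical one). The map $\alpha_1$ carries this labelling to the corresponding labelling in $\mathcal T'$, up to a global symmetry. The east/west distinction comes from the order in (iii); a swap of families composed with the orders gives a reflection. So the rotation systems agree up to a global orientation reversal.

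A plane embedding is determined by its rotation system, and its faces are the orbits of the face-tracing permutation. Hence $\alpha_1$ maps facial cycles to facial cycles. The outer face corresponds to the outer face because boundary edges correspond. Each cell $\sigma$ is a bounded face whose boundary cycle is a facial cycle, so I will set $\alpha_2(\sigma)$ to be the cell bounded by the image cycle. Uniqueness holds because a cell is determined by its boundary cycle, and incidences are preserved by construction.

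The hardest part is making the rotation-system argument rigorous. This includes checking that the simple connectivity of $\Omega$ and the connectivity of $\Omega^\circ$ make the plane graph connected with a single outer face, so that every bounded face is a cell. Together these give the claimed isomorphism $(\alpha_2,\alpha_1,\alpha_0)$ of relative CW complexes.
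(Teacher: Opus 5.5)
Your proposal is correct and follows essentially the paper's proof: $\alpha_0$ comes from condition (ii) together with the one-horizontal-one-vertical assumption, $\alpha_1$ from the coordinate order of transversal segments preserved by (iii), and $\alpha_2$ from rotation systems agreeing up to one global reversal and the resulting face cycles, with the exterior face fixed because boundary edges map to boundary edges. The only local difference is the first step, where you get global preservation or interchange of directions directly from the parallelism clause of (iii) instead of from the unique bipartition of the connected incidence graph, which is shorter and needs no connectivity; note also that (iii) preserves the order along each segment even when directions are swapped, so your ``possibly reversing'' hedge is unnecessary, though harmless.
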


\begin{proof}
The one-skeleton of a T-mesh is connected.  Every edge lies on one member of
$\widehat{\operatorname{MS}}(\mathcal T)$, so the incidence graph of all maximal segments is connected.  This graph is bipartite: one part consists of
horizontal maximal segments and the other of vertical maximal segments.  A
connected bipartite graph has a unique bipartition up to interchanging its two
parts.  Hence $\alpha$ either preserves the two direction families or interchanges
them globally.

By the assumptions above, every vertex is the intersection of one horizontal and
one vertical member of $\widehat{\operatorname{MS}}(\mathcal T)$.  Condition~\textup{(ii)}
therefore defines a bijection $\alpha_0$ of vertices.  Fix a segment
$\rho\in\widehat{\operatorname{MS}}(\mathcal T)$.  The vertices on $\rho$ are ordered by the coordinate order of
the perpendicular maximal segments that meet $\rho$.  Condition~\textup{(iii)}
preserves this order.  Thus two consecutive vertices on $\rho$ are sent to two
consecutive vertices on $\alpha(\rho)$.  The closed subsegment between a
consecutive pair is an edge by the assumptions above, so this gives
a bijection $\alpha_1$ of edges.  Condition~\textup{(i)} preserves whether a maximal segment lies on the boundary and therefore preserves boundary and interior edges.

It remains to recover the cells.  At a vertex on a fixed horizontal maximal
segment, the order just described determines whether there is a horizontal
half-edge to the left, to the right, or to both sides; the same statement holds
vertically.  Hence the four possible local positions of incident half-edges are
preserved, up to the same global interchange of horizontal and vertical
directions.  Therefore $\alpha_1$ preserves the cyclic order of incident
half-edges at every vertex, again up to one global reversal.  The one-skeleton
together with these cyclic orders determines the face cycles of the planar
embedding.  Since boundary edges are mapped to boundary edges, the exterior face
is preserved.  The remaining face cycles are the boundaries of the rectangular
cells.  Mapping each such face to its corresponding face defines a bijection
$\alpha_2:\mathcal T_2\to\mathcal T_2'$ and proves the stated relative
CW-complex isomorphism.
\end{proof}

The definition records more than an unlabelled incidence graph.  It preserves the four types of maximal segments, their intersections with the boundary and with one another, and the relative order of parallel maximal segments.  Proposition~\ref{prop:induced-tmesh-isomorphism} shows that this information determines the vertices, edges, and cells of a reduced T-mesh.  When no confusion can arise, we use the same symbol $\alpha$ for the induced maps $\alpha_0$, $\alpha_1$, and $\alpha_2$.

The next definition specifies the continuity order across each edge.  We define the smoothness distribution separately because the structural class compares both the T-mesh and its prescribed smoothness.

\begin{definition}[Smoothness distribution {\cite{ToshniwalVillamizar2020}}]
\label{def:smoothness-distribution}
A smoothness distribution on $\mathcal T$ is a map
\[
    \mathbf r:\mathcal T_1\longrightarrow \mathbb Z_{\ge -1}:=\{-1,0,1,\ldots\}
\]
such that $\mathbf r(\tau)=-1$ for every boundary edge
$\tau\in\mathcal T_1\setminus\mathcal T_1^\circ$.  For an interior edge $\tau$,
the integer $\mathbf r(\tau)$ is the order of smoothness required across $\tau$.
\end{definition}

Throughout the remainder of the paper, we assume that the smoothness order is constant along each maximal segment, as in \citet{Mourrain2014}: for
every $\rho\in\operatorname{MS}(\mathcal T)$ and all interior edges
$\tau,\tau'\subset\rho$,
\[
    \mathbf r(\tau)=\mathbf r(\tau').
\]
The common value is denoted by $r(\rho)$.  Thus the distribution may be
non-uniform from one maximal segment to another, but it is constant along each
individual maximal segment.  This assumption allows us to factor the same power of the line equation from
all edge ideals on a given maximal segment in the deletion argument below.

To compare spline dimensions, we also require corresponding edges to have the same smoothness order.  We therefore compare pairs $(\mathcal T',\mathbf r')$, consisting of a T-mesh and its smoothness distribution.

\begin{definition}[Structural class]
\label{def:structural-class}
For reduced T-meshes, write
$\mathcal T\sim_{\mathrm I}\mathcal T'$ if there is a structural isomorphism
from $\mathcal T$ to $\mathcal T'$, and write
$\mathcal T\sim_{\mathrm I}^{+}\mathcal T'$ if such an isomorphism can be chosen
direction-preserving.  These are equivalence relations, and their equivalence
classes are denoted by $[\mathcal T]_{\mathrm I}$ and
$[\mathcal T]_{\mathrm I}^{+}$, respectively.

Now fix a smoothness distribution $\mathbf r$ on $\mathcal T$ with this property.  A pair $(\mathcal T',\mathbf r')$ belongs to the structural class
of $(\mathcal T,\mathbf r)$ if there is a direction-preserving structural
isomorphism $\alpha:\mathcal T\to\mathcal T'$ such that
\[
\mathbf r'\bigl(\alpha_1(\tau)\bigr)=\mathbf r(\tau)
\qquad
\text{for every }\tau\in\mathcal T_1^\circ,
\]
where $\alpha_1$ is the induced edge bijection in
Proposition~\ref{prop:induced-tmesh-isomorphism}.  Thus corresponding interior
edges have the same smoothness order.  We denote this class by
$[\mathcal T,\mathbf r]_{\mathrm I}^{+}$.

If two structural isomorphisms to the same geometric T-mesh lead to different
smoothness distributions, the resulting pairs are regarded as different elements
of the class.  Thus an element of $[\mathcal T,\mathbf r]_{\mathrm I}^{+}$ records
both the T-mesh and the smoothness distribution imposed on it.
\end{definition}

\begin{figure}[t]
\centering
\begin{tikzpicture}[scale=0.78, every node/.style={font=\small}]
    % outer boundary
    \draw[thick] (0,0) rectangle (5,6);

    % interior edges: obtained from Fig.~\ref{fig:basic-tmesh} by interchanging
    % the horizontal and vertical directions
    \draw[thick] (0,2) -- (5,2);
    \draw[thick] (1,2) -- (1,6);
    \draw[thick] (4,2) -- (4,6);
    \draw[thick] (1,4) -- (4,4);
    \draw[thick] (2.5,2) -- (2.5,4);

    % marked vertices
    \fill (2.5,2) circle (2pt);
    \fill (2.5,4) circle (2pt);
    \fill (1,4) circle (2pt);
    \fill (4,4) circle (2pt);

    \node[below right] at (2.5,2) {$v_1'$};
    \node[above left] at (2.5,4) {$v_2'$};
    \node[left] at (1,4) {$v_3'$};
    \node[right] at (4,4) {$v_4'$};
\end{tikzpicture}
\caption{A T-mesh structurally isomorphic to the T-mesh in
Fig.~\ref{fig:basic-tmesh}.}
\label{fig:structural-isomorphic-example}
\end{figure}

For example, let $\mathcal T$ be the T-mesh in Fig.~\ref{fig:basic-tmesh}, and
let $\mathcal T'$ be the T-mesh in Fig.~\ref{fig:structural-isomorphic-example}.
The correspondence sends each marked vertex $v_i$ to $v_i'$, sends each interior maximal segment to the corresponding interior maximal segment, and sends the four boundary maximal segments to their corresponding images.  It preserves maximal-segment types, incidences, and the coordinate order of parallel maximal segments.  Hence
$\mathcal T'\in[\mathcal T]_{\mathrm I}$.  The source mesh has a vertical
cross-cut, whereas the target mesh has a horizontal cross-cut.  Hence no
direction-preserving structural isomorphism exists, and
$\mathcal T'\notin[\mathcal T]_{\mathrm I}^{+}$.  After interchanging the two
variables, a spline space of ordered bi-degree $(m,m')$ on the first mesh
corresponds to one of ordered bi-degree $(m',m)$ on the second mesh.

For $(\mathcal T',\mathbf r')\in[\mathcal T,\mathbf r]_{\mathrm I}^{+}$,
the structural isomorphism identifies corresponding maximal segments, vertices,
edges, and cells.  It also assigns the same smoothness order to corresponding
interior edges.  The edge ideals, vertex ideals, and chain complexes can therefore
be compared term by term.  The spline space on the second T-mesh is
$\mathcal S_{m,m'}^{\mathbf r'}(\mathcal T')$.

A direction-interchanging structural isomorphism gives a separate symmetry.  After simultaneously interchanging $s$ and $t$, $m$ and $m'$, and the horizontal and vertical smoothness orders, it identifies the corresponding constructions on the two meshes.  Such maps belong to $[\mathcal T]_{\mathrm I}$, but they are not used in $[\mathcal T,\mathbf r]_{\mathrm I}^{+}$ when the ordered bi-degree is fixed.

For an interior vertex $\gamma\in\mathcal T_0^\circ$, let
$\rho_h(\gamma)$ and $\rho_v(\gamma)$ be the horizontal and vertical maximal
segments through $\gamma$, respectively.  Because $\mathbf r$ is constant along each maximal segment, we define
\[
    r_h(\gamma):=r\bigl(\rho_v(\gamma)\bigr),
    \qquad
    r_v(\gamma):=r\bigl(\rho_h(\gamma)\bigr).
\]
Thus $r_h(\gamma)$ records the smoothness imposed by the vertical maximal segment
through $\gamma$, and $r_v(\gamma)$ records the smoothness imposed by the
horizontal maximal segment through $\gamma$.  These values agree with
$\mathbf r(\tau)$ for every incident edge $\tau$ of the corresponding direction.

Let $R:=\mathbb R[s,t]$ be the polynomial ring in two variables.  For integers $m,m'\ge 0$, denote by
\[
    R_{m,m'}
    :=
    \operatorname{span}_{\mathbb R}\{s^i t^j:0\le i\le m,\ 0\le j\le m'\}
\]
the vector space of polynomials of bi-degree at most $(m,m')$.  We use the convention $R_{a,b}=\{0\}$ whenever $a<0$ or $b<0$, and
$\mathbb R[u]_{\le n}=\{0\}$ whenever $n<0$.

With the T-mesh and its smoothness distribution fixed, we can now define the spline space studied in this paper.

\begin{definition}[Polynomial spline space {\cite{ToshniwalVillamizar2020}}]
\label{def:polynomial-spline-space}
Let $\mathcal T$ be a T-mesh, let $(m,m')$ be a bi-degree, and let $\mathbf r$ be a
smoothness distribution.  The polynomial spline space of bi-degree $(m,m')$ and
smoothness distribution $\mathbf r$ over $\mathcal T$ is
\[
    \mathcal S_{m,m'}^{\mathbf r}(\mathcal T)
    :=
    \left\{
    f:\Omega\to\mathbb R
    \ \middle|\
    \begin{array}{l}
    f|_{\sigma}\in R_{m,m'}
    \text{ for every }\sigma\in\mathcal T_2,\\
    f\text{ is }C^{\mathbf r(\tau)}
    \text{ across every }\tau\in\mathcal T_1^\circ
    \end{array}
    \right\}.
\]
The boundary edges impose no smoothness condition.  We use the standard convention that $C^{-1}$ means that no matching condition is imposed; thus an interior edge may also be assigned the value $-1$.
\end{definition}

The following algebraic characterization of smoothness will be used repeatedly.

\begin{lemma}[Edge smoothness criterion {\cite{ChuiWang1983a,Billera1988}}]
\label{lem:edge-smoothness}
Let $\sigma,\sigma'\in\mathcal T_2$ be two cells sharing an interior edge
$\tau=\sigma\cap\sigma'$, and let $\ell_\tau\in R$ be a nonzero affine linear polynomial
vanishing on the line supporting $\tau$.  Suppose that a piecewise polynomial function is
given by $p$ on $\sigma$ and by $p'$ on $\sigma'$.  Then this function is
$C^r$ across $\tau$ if and only if
\[
    p-p'\in \left\langle \ell_\tau^{\,r+1}\right\rangle .
\]
\end{lemma}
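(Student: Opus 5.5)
We prove the two implications separately, after reducing to an affine change of coordinates that turns $\ell_\tau$ into a coordinate function. Since $\tau$ is axis-aligned, we may assume without loss of generality that $\tau$ is vertical, so $\ell_\tau=s-a$ up to a nonzero scalar, where $a=\operatorname{cor}(\rho(\tau))$. The horizontal case is identical with the roles of $s$ and $t$ interchanged. Scaling $\ell_\tau$ does not change the ideal $\langle\ell_\tau^{\,r+1}\rangle$. We write $q:=p-p'$, a polynomial in $R$. The case $r=-1$ is trivial: $C^{-1}$ imposes no condition, and $\langle\ell_\tau^{0}\rangle=R$ contains every $q$. So assume $r\ge 0$.

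The key step is a Taylor expansion of $q$ in the variable $s$ around $s=a$:
\[
q(s,t)=\sum_{k\ge 0}\frac{(s-a)^k}{k!}\,\partial_s^k q(a,t).
\]
This is a finite sum with polynomial coefficients in $t$. The function defined by $p$ on $\sigma$ and $p'$ on $\sigma'$ is $C^r$ across $\tau$ exactly when all partial derivatives of order at most $r$ of $p$ and $p'$ agree on $\tau$. Because $\tau$ lies on $\{s=a\}$, tangential derivatives $\partial_t$ commute with restriction to the line. Hence the condition reduces to
\[
\partial_s^k q(a,t)=0 \qquad\text{for } t \text{ in the } t\text{-interval of } \tau,\quad 0\le k\le r.
\]
Each $\partial_s^k q(a,\cdot)$ is a univariate polynomial in $t$. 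Since $\tau$ is a nondegenerate segment, vanishing on the interval is equivalent to vanishing identically.

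\emph{Sufficiency.} Suppose $q\in\langle(s-a)^{r+1}\rangle$. Then every term of the expansion has $k\ge r+1$, so all $\partial_s^k q(a,t)$ with $k\le r$ vanish, and therefore all mixed derivatives $\partial_s^k\partial_t^j q$ with $k\le r$ vanish on the line.

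\emph{Necessity.} Suppose the $C^r$ condition holds. Then the coefficients for $k\le r$ vanish identically, and the expansion gives $q=(s-a)^{r+1}\sum_{k\ge r+1}\frac{(s-a)^{k-r-1}}{k!}\partial_s^k q(a,t)$, which lies in the ideal.

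The only delicate point is the equivalence between the analytic definition of $C^r$ across an edge and the pointwise matching of derivatives of the two polynomial pieces on the common edge. For piecewise polynomials this equivalence holds by definition, since both pieces extend analytically. The remaining issue is the passage from vanishing on the segment $\tau$ to vanishing on the whole line. This follows because a univariate polynomial with infinitely many zeros is zero.
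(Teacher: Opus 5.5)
The paper gives no proof of its own here (it cites Chui--Wang and Billera), and your argument is correct and is essentially the standard one from those sources: normalize $\ell_\tau$ to $s-a$, expand $p-p'$ in powers of $s-a$ with coefficients in $\mathbb R[t]$, and observe that matching derivatives of order at most $r$ along a nondegenerate segment forces exactly the first $r+1$ coefficients to vanish identically. The one point you take as definitional (that $C^r$ across $\tau$ means the two pieces have equal derivatives of order $\le r$ on $\tau$) is the usual convention. If one instead uses the neighborhood definition, sufficiency still follows at once by writing the function near $\tau$ as $p'+(s-a)_+^{r+1}g$ (with $\sigma$ on the side $s\ge a$), since the truncated power is $C^r$.
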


\subsection{CW chain complexes}
\label{subsec:cw-chain-complexes}

We now introduce three CW chain complexes associated with the interior T-mesh
$\mathcal T^\circ$.  They are built from formal symbols $[\sigma]$, $[\tau]$, and
$[\gamma]$ attached respectively to cells, interior edges, and interior vertices.

For an interior edge $\tau\in\mathcal T_1^\circ$, define
\[
    I_\tau^{\mathbf r}
    :=
    \left\langle \ell_\tau^{\,\mathbf r(\tau)+1}\right\rangle\cap R_{m,m'} .
\]
For an interior vertex $\gamma\in\mathcal T_0^\circ$, define
\[
    I_\gamma^{\mathbf r}
    :=
    \sum_{\substack{\tau\ni\gamma\\ \tau\in\mathcal T_1^\circ}}
    I_\tau^{\mathbf r}
    \subseteq R_{m,m'} .
\]
Equivalently, $I_\gamma^{\mathbf r}$ is generated, in bi-degree $(m,m')$, by the
powers of the linear forms corresponding to the vertical and horizontal edges meeting
at $\gamma$.

The first complex is the relative CW chain complex with polynomial coefficients:
\[
\mathcal C_{m,m'}(\mathcal T^\circ):
\quad
0
\longrightarrow
\bigoplus_{\sigma\in\mathcal T_2}[\sigma]R_{m,m'}
\xrightarrow{\partial_2}
\bigoplus_{\tau\in\mathcal T_1^\circ}[\tau]R_{m,m'}
\xrightarrow{\partial_1}
\bigoplus_{\gamma\in\mathcal T_0^\circ}[\gamma]R_{m,m'}
\longrightarrow
0 .
\]
Choose an orientation for every interior edge $\tau\in\mathcal T_1^\circ$.  If
$\tau=[\gamma_-,\gamma_+]$ is oriented from $\gamma_-$ to $\gamma_+$, then
\[
    \partial_1([\tau]p)
    =
    [\gamma_+]p-[\gamma_-]p,
    \qquad p\in R_{m,m'}.
\]
Here $[\gamma]p$ is set equal to zero if $\gamma$ is a boundary vertex.  Each cell
$\sigma\in\mathcal T_2$ is oriented counterclockwise.  For $p\in R_{m,m'}$, define
\[
    \partial_2([\sigma]p)
    =
    \sum_{\tau\subset\partial\sigma\cap\mathcal T_1^\circ}
    \varepsilon_{\sigma,\tau}[\tau]p,
\]
where $\varepsilon_{\sigma,\tau}=1$ if the orientation induced by $\partial\sigma$
agrees with the chosen orientation of $\tau$, and
$\varepsilon_{\sigma,\tau}=-1$ otherwise.  With these definitions,
$\partial_1\circ\partial_2=0$.

The second complex is the ideal subcomplex of the relative CW chain complex determined by the smoothness distribution:
\[
\mathcal I_{m,m'}^{\mathbf r}(\mathcal T^\circ):
\quad
0
\longrightarrow
0
\longrightarrow
\bigoplus_{\tau\in\mathcal T_1^\circ}[\tau]I_\tau^{\mathbf r}
\xrightarrow{\widehat{\partial}_1}
\bigoplus_{\gamma\in\mathcal T_0^\circ}[\gamma]I_\gamma^{\mathbf r}
\longrightarrow
0 .
\]
The map $\widehat{\partial}_1$ is the restriction of $\partial_1$.  This is well defined
because $I_\tau^{\mathbf r}\subseteq I_\gamma^{\mathbf r}$ whenever $\gamma$ is an
endpoint of $\tau$.

The third complex is the quotient CW chain complex:
\[
\mathcal Q_{m,m'}^{\mathbf r}(\mathcal T^\circ):
\quad
0
\longrightarrow
\bigoplus_{\sigma\in\mathcal T_2}[\sigma]R_{m,m'}
\xrightarrow{\overline{\partial}_2}
\bigoplus_{\tau\in\mathcal T_1^\circ}
[\tau]\left(R_{m,m'}/I_\tau^{\mathbf r}\right)
\xrightarrow{\overline{\partial}_1}
\bigoplus_{\gamma\in\mathcal T_0^\circ}
[\gamma]\left(R_{m,m'}/I_\gamma^{\mathbf r}\right)
\longrightarrow
0 .
\]
The maps $\overline{\partial}_2$ and $\overline{\partial}_1$ are induced by
$\partial_2$ and $\partial_1$ after passing to quotients.

The three CW chain complexes fit into the following short exact sequence of chain complexes:
{\footnotesize
\[
\begin{array}{rccccccccc}
\mathcal I_{m,m'}^{\mathbf r}(\mathcal T^\circ):
&
0 & \longrightarrow & 0
& \longrightarrow &
\displaystyle\bigoplus_{\tau\in\mathcal T_1^\circ}[\tau]I_\tau^{\mathbf r}
& \xrightarrow{\widehat{\partial}_1} &
\displaystyle\bigoplus_{\gamma\in\mathcal T_0^\circ}[\gamma]I_\gamma^{\mathbf r}
& \longrightarrow & 0
\\[2mm]
&&& \downarrow && \downarrow && \downarrow &&
\\[-1mm]
\mathcal C_{m,m'}(\mathcal T^\circ):
&
0 & \longrightarrow &
\displaystyle\bigoplus_{\sigma\in\mathcal T_2}[\sigma]R_{m,m'}
& \xrightarrow{\partial_2} &
\displaystyle\bigoplus_{\tau\in\mathcal T_1^\circ}[\tau]R_{m,m'}
& \xrightarrow{\partial_1} &
\displaystyle\bigoplus_{\gamma\in\mathcal T_0^\circ}[\gamma]R_{m,m'}
& \longrightarrow & 0
\\[2mm]
&&& \downarrow && \downarrow && \downarrow &&
\\[-1mm]
\mathcal Q_{m,m'}^{\mathbf r}(\mathcal T^\circ):
&
0 & \longrightarrow &
\displaystyle\bigoplus_{\sigma\in\mathcal T_2}[\sigma]R_{m,m'}
& \xrightarrow{\overline{\partial}_2} &
\displaystyle\bigoplus_{\tau\in\mathcal T_1^\circ}
[\tau]\left(R_{m,m'}/I_\tau^{\mathbf r}\right)
& \xrightarrow{\overline{\partial}_1} &
\displaystyle\bigoplus_{\gamma\in\mathcal T_0^\circ}
[\gamma]\left(R_{m,m'}/I_\gamma^{\mathbf r}\right)
& \longrightarrow & 0 .
\end{array}
\]
}
The vertical maps from $\mathcal I_{m,m'}^{\mathbf r}$ to
$\mathcal C_{m,m'}$ are inclusions, and those from $\mathcal C_{m,m'}$ to
$\mathcal Q_{m,m'}^{\mathbf r}$ are quotient maps.

The quotient complex was constructed so that the top cycles are exactly the
piecewise polynomials satisfying the prescribed edge smoothness conditions.
Using Lemma~\ref{lem:edge-smoothness}, this gives the following homological
characterization of the spline space.

\begin{theorem}[Spline space as top homology {\cite{Mourrain2014}}]
\label{thm:spline-homology}
For a simply connected T-mesh $\mathcal T$, there is a natural vector space
isomorphism
\[
    \mathcal S_{m,m'}^{\mathbf r}(\mathcal T)
    \cong
    H_2\!\left(\mathcal Q_{m,m'}^{\mathbf r}(\mathcal T^\circ)\right).
\]
\end{theorem}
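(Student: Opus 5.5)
Since $\mathcal Q_{m,m'}^{\mathbf r}(\mathcal T^\circ)$ has nothing in degree $3$, its top homology is the cycle space
\[
H_2\bigl(\mathcal Q_{m,m'}^{\mathbf r}(\mathcal T^\circ)\bigr)=\ker\overline{\partial}_2\subseteq\bigoplus_{\sigma\in\mathcal T_2}[\sigma]R_{m,m'}.
\]
The plan is to identify this kernel with the spline space. To a spline $f\in\mathcal S_{m,m'}^{\mathbf r}(\mathcal T)$ we assign the chain $\Phi(f):=\sum_{\sigma}[\sigma]\,f|_\sigma$. The map $\Phi$ is linear and injective, because a spline is determined by its restrictions to the cells, and these cover $\Omega$. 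Conversely, every chain $\sum_\sigma[\sigma]p_\sigma$ determines a piecewise polynomial; the only question is whether it is well defined on shared edges and has the required smoothness. So it suffices to prove
\[
\overline{\partial}_2\Bigl(\sum_\sigma[\sigma]p_\sigma\Bigr)=0
\iff
\text{the piecewise polynomial }(p_\sigma)\text{ is }C^{\mathbf r(\tau)}\text{ across every }\tau\in\mathcal T_1^\circ .
\]

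To compute the boundary, we collect the coefficient of each interior edge $\tau$. Because $\Omega^\circ$ is connected and the cells have disjoint interiors, an interior edge $\tau$ lies on the boundary of exactly two cells $\sigma,\sigma'$, one on each side of its supporting line. The counterclockwise orientations of these two cells induce opposite orientations on $\tau$, so $\varepsilon_{\sigma,\tau}=-\varepsilon_{\sigma',\tau}$. Hence the $[\tau]$-component of $\overline{\partial}_2(\sum[\sigma]p_\sigma)$ is $\pm(p_\sigma-p_{\sigma'})$ modulo $I_\tau^{\mathbf r}$. This step is the only place where geometry enters, and I expect it to be the main point needing care. One subtlety is that $\partial\sigma$ is a union of edges, so $\tau$ is a full edge of both cells. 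Another is that boundary edges are omitted from the interior complex, and this matches the absence of smoothness conditions on them.

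The chain is therefore a cycle exactly when $p_\sigma-p_{\sigma'}\in I_\tau^{\mathbf r}$ for every interior edge $\tau=\sigma\cap\sigma'$. We have $p_\sigma-p_{\sigma'}\in R_{m,m'}$ and $I_\tau^{\mathbf r}=\langle\ell_\tau^{\mathbf r(\tau)+1}\rangle\cap R_{m,m'}$, so this condition is equivalent to $p_\sigma-p_{\sigma'}\in\langle\ell_\tau^{\mathbf r(\tau)+1}\rangle$. By Lemma~\ref{lem:edge-smoothness}, that is exactly $C^{\mathbf r(\tau)}$ smoothness across $\tau$. When $\mathbf r(\tau)=-1$, the ideal is all of $R$ and there is no condition, which agrees with the convention for $C^{-1}$.

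For $\mathbf r(\tau)\ge 0$, the condition also forces $p_\sigma=p_{\sigma'}$ on $\tau$. So the piecewise function is single-valued on interior edges and is a genuine function on $\Omega$. Ambiguities at isolated vertices, or on boundary-free $C^{-1}$ edges, are handled by the usual convention that such splines are identified with their tuples of cell polynomials. Thus $\Phi$ is a bijection onto $\ker\overline{\partial}_2$, and it is natural because no choices were made beyond the fixed orientations. Simple connectivity is not otherwise needed for this degree-$2$ statement. It matters later for the vanishing of $H_2(\mathcal C)$ and for the Euler characteristic formula.
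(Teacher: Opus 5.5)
Your proposal is correct and is essentially the argument the paper relies on: the paper gives no separate proof but, following Mourrain, notes that the top cycles of $\mathcal Q_{m,m'}^{\mathbf r}(\mathcal T^\circ)$ are exactly the tuples $(p_\sigma)$ with $p_\sigma-p_{\sigma'}\in I_\tau^{\mathbf r}$ on every interior edge, which Lemma~\ref{lem:edge-smoothness} identifies with the spline conditions. One small correction: the fact that each interior edge is a full edge of exactly two cells follows from the relative interior of $\tau$ lying in $\Omega^\circ$, together with the axioms that cell boundaries are unions of edges and that distinct edges meet only in vertices, rather than from connectivity of $\Omega^\circ$. Your remarks on the $C^{-1}$ convention and on simple connectivity being unnecessary for this degree-$2$ identification are accurate.
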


The homology of the middle complex is purely topological.  Since $\Omega$ is
simply connected and $\Omega^\circ$ is connected, the relative CW chain complex
satisfies
\[
    H_0\!\left(\mathcal C_{m,m'}(\mathcal T^\circ)\right)=0,
    \qquad
    H_1\!\left(\mathcal C_{m,m'}(\mathcal T^\circ)\right)=0.
\]
The long exact homology sequence induced by the short exact sequence of
complexes gives
\[
    H_0\!\left(\mathcal Q_{m,m'}^{\mathbf r}(\mathcal T^\circ)\right)=0,
    \qquad
    H_1\!\left(\mathcal Q_{m,m'}^{\mathbf r}(\mathcal T^\circ)\right)
    \cong
    H_0\!\left(\mathcal I_{m,m'}^{\mathbf r}(\mathcal T^\circ)\right).
\]
Therefore, by the Euler characteristic identity for the quotient complex, one
obtains the following dimension formula.

\begin{theorem}[Homological dimension formula {\cite{Mourrain2014}}]
\label{thm:homological-dimension-formula}
The dimension of the spline space is
\[
    \dim \mathcal S_{m,m'}^{\mathbf r}(\mathcal T)
    =
    \chi\!\left(\mathcal Q_{m,m'}^{\mathbf r}(\mathcal T^\circ)\right)
    +
    \dim H_0\!\left(\mathcal I_{m,m'}^{\mathbf r}(\mathcal T^\circ)\right),
\]
where
\[
\begin{aligned}
\chi\!\left(\mathcal Q_{m,m'}^{\mathbf r}(\mathcal T^\circ)\right)
={}&
|\mathcal T_2|(m+1)(m'+1)
\\
&-
\sum_{\tau\in\mathcal T_1^{\circ,h}}
(m+1)\bigl(\min\{\mathbf r(\tau),m'\}+1\bigr)
\\
&-
\sum_{\tau\in\mathcal T_1^{\circ,v}}
\bigl(\min\{\mathbf r(\tau),m\}+1\bigr)(m'+1)
\\
&+
\sum_{\gamma\in\mathcal T_0^\circ}
\bigl(\min\{r_h(\gamma),m\}+1\bigr)
\bigl(\min\{r_v(\gamma),m'\}+1\bigr).
\end{aligned}
\]
\end{theorem}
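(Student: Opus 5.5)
The proof combines Theorem~\ref{thm:spline-homology} with the vanishing of $H_0$ and the isomorphism $H_1(\mathcal Q)\cong H_0(\mathcal I)$ recorded above. It then computes the Euler characteristic term by term. Since $\mathcal Q_{m,m'}^{\mathbf r}(\mathcal T^\circ)$ is a bounded complex of finite-dimensional spaces, the rank--nullity identity gives
\[
\dim Q_2-\dim Q_1+\dim Q_0=\dim H_2-\dim H_1+\dim H_0 .
\]
Substituting $H_2\cong\mathcal S_{m,m'}^{\mathbf r}(\mathcal T)$, $H_0(\mathcal Q)=0$ and $H_1(\mathcal Q)\cong H_0(\mathcal I)$ yields
\[
\dim\mathcal S=\chi(\mathcal Q)+\dim H_0(\mathcal I),
\qquad
\chi(\mathcal Q):=\sum_i(-1)^i\dim Q_i .
\]
It remains to compute the three chain dimensions.

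For the $2$-chains, the term is immediate: $\dim\bigoplus_\sigma[\sigma]R_{m,m'}=|\mathcal T_2|(m+1)(m'+1)$.

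For an edge, we need $\dim R_{m,m'}/I_\tau^{\mathbf r}$. Take a horizontal edge, so $\ell_\tau=t-c$, and write $r=\mathbf r(\tau)$. Using the basis $s^i(t-c)^j$ of $R_{m,m'}$, the polynomials in $R_{m,m'}$ divisible by $(t-c)^{r+1}$ are exactly the span of the basis elements with $j\ge r+1$. Hence
\[
\dim R_{m,m'}/I_\tau^{\mathbf r}=(m+1)\bigl(\min\{r,m'\}+1\bigr).
\]
This also covers $r=-1$, where the quotient is $0$, and $r\ge m'$, where $I_\tau^{\mathbf r}=0$. A vertical edge gives $(\min\{r,m\}+1)(m'+1)$ symmetrically.

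For an interior vertex $\gamma=(a,b)$, use the basis $(s-a)^i(t-b)^j$. I expect the one real point of care to lie here, in establishing that $I_\gamma^{\mathbf r}$ is a monomial subspace in these shifted coordinates. All incident vertical edges have order $r_h(\gamma)$ and all horizontal ones have order $r_v(\gamma)$, by constancy along maximal segments. So $I_\gamma^{\mathbf r}$ is the span of the basis elements with $i\ge r_h(\gamma)+1$ or $j\ge r_v(\gamma)+1$. The sum of these two coordinate subspaces is again a coordinate subspace, which rules out any cancellation subtlety. The complement is spanned by the elements with $i\le\min\{r_h,m\}$ and $j\le\min\{r_v,m'\}$, giving the product formula.

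One case needs checking separately: a T-vertex where only one direction of edges is present on one side. The remark that every vertex lies on one horizontal and one vertical member of $\widehat{\operatorname{MS}}(\mathcal T)$ ensures that both directions contribute interior edges at an interior vertex. Hence $r_h$ and $r_v$ are both defined, and the formula holds there as well.

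Summing the three terms with alternating signs gives the displayed expression for $\chi(\mathcal Q_{m,m'}^{\mathbf r}(\mathcal T^\circ))$.
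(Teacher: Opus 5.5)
Your proposal is correct and follows the paper's route. The paper likewise obtains $H_0(\mathcal Q_{m,m'}^{\mathbf r}(\mathcal T^\circ))=0$ and $H_1(\mathcal Q_{m,m'}^{\mathbf r}(\mathcal T^\circ))\cong H_0(\mathcal I_{m,m'}^{\mathbf r}(\mathcal T^\circ))$ from the long exact sequence, then combines the Euler characteristic identity with Theorem~\ref{thm:spline-homology}. Your shifted-monomial-basis computations of $\dim R_{m,m'}/I_\tau^{\mathbf r}$ and $\dim R_{m,m'}/I_\gamma^{\mathbf r}$ are also correct, including the check that every interior vertex has interior edges in both directions; they supply the term-by-term count that the paper leaves implicit and attributes to Mourrain.
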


\begin{explanatoryremark}[The Euler characteristic term]
The quantity
\[
\chi\!\left(\mathcal Q_{m,m'}^{\mathbf r}(\mathcal T^\circ)\right)
\]
is the Euler characteristic of the quotient complex, not the ordinary Euler characteristic of the domain $\Omega$.  It is computed from the terms corresponding to cells, interior edges, and interior vertices in Theorem~\ref{thm:homological-dimension-formula}, together with the ordered bi-degree and the smoothness orders.  It is therefore independent of the numerical positions of the mesh lines when the mesh structure and the smoothness orders are fixed.  The deletion process in Section~\ref{sec:decomposition} changes only the quotient representation of the correction homology.  It does not change the T-mesh $\mathcal T$ or the quotient complex $\mathcal Q_{m,m'}^{\mathbf r}(\mathcal T^\circ)$, and hence does not change this Euler characteristic term.
\end{explanatoryremark}

\begin{corollary}[Vanishing correction homology {\cite{Mourrain2014}}]
\label{cor:vanishing-correction}
If
\[
    H_0\!\left(\mathcal I_{m,m'}^{\mathbf r}(\mathcal T^\circ)\right)=0,
\]
then
\[
    \dim \mathcal S_{m,m'}^{\mathbf r}(\mathcal T)
    =
    \chi\!\left(\mathcal Q_{m,m'}^{\mathbf r}(\mathcal T^\circ)\right).
\]
In this case the dimension is determined by the Euler characteristic of the
quotient complex.
\end{corollary}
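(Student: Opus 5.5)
The corollary follows by substituting the hypothesis into Theorem~\ref{thm:homological-dimension-formula}. The plan is to recall where that formula comes from, so that each step uses only results already stated. We start from Theorem~\ref{thm:spline-homology}, which identifies $\mathcal S_{m,m'}^{\mathbf r}(\mathcal T)$ with $H_2(\mathcal Q_{m,m'}^{\mathbf r}(\mathcal T^\circ))$. Next we use the long exact homology sequence of the short exact sequence $0\to\mathcal I\to\mathcal C\to\mathcal Q\to 0$, together with $H_0(\mathcal C)=H_1(\mathcal C)=0$. This gives $H_0(\mathcal Q)=0$ and $H_1(\mathcal Q)\cong H_0(\mathcal I)$.

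All spaces in $\mathcal Q_{m,m'}^{\mathbf r}(\mathcal T^\circ)$ are finite-dimensional, since $R_{m,m'}$ is. Hence the alternating sum of the chain dimensions equals the alternating sum of the homology dimensions:
\[
\chi\bigl(\mathcal Q_{m,m'}^{\mathbf r}(\mathcal T^\circ)\bigr)=\dim H_2(\mathcal Q)-\dim H_1(\mathcal Q)+\dim H_0(\mathcal Q).
\]
Substituting the identifications above gives $\dim\mathcal S_{m,m'}^{\mathbf r}(\mathcal T)=\chi(\mathcal Q)+\dim H_0(\mathcal I)$, which is exactly Theorem~\ref{thm:homological-dimension-formula}.

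Under the hypothesis $H_0(\mathcal I_{m,m'}^{\mathbf r}(\mathcal T^\circ))=0$, the second summand vanishes. What remains is $\dim\mathcal S_{m,m'}^{\mathbf r}(\mathcal T)=\chi(\mathcal Q_{m,m'}^{\mathbf r}(\mathcal T^\circ))$. By the explicit formula for $\chi(\mathcal Q)$, this value depends only on the numbers of cells, interior horizontal and vertical edges, interior vertices, the ordered bi-degree, and the smoothness orders.

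No step is a real obstacle; the argument is a direct specialization. The only point needing care is one convention: the explicit $\chi$ formula uses $\dim(R_{m,m'}/I_\tau^{\mathbf r})$ and $\dim(R_{m,m'}/I_\gamma^{\mathbf r})$ with $\min\{\cdot,m\}$ truncations, and it handles $r=-1$ (no condition) correctly. But this formula is part of the statement of Theorem~\ref{thm:homological-dimension-formula}, which we may assume, so the corollary needs nothing further.
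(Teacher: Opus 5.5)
Your proposal is correct and follows the paper's route: the corollary comes from substituting $H_0\!\left(\mathcal I_{m,m'}^{\mathbf r}(\mathcal T^\circ)\right)=0$ into Theorem~\ref{thm:homological-dimension-formula}. Your recap of that formula is the same derivation the paper sketches just before stating it: Theorem~\ref{thm:spline-homology}, then the long exact sequence with vanishing $H_0$ and $H_1$ of the middle complex, then the Euler characteristic identity.
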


\section{Deleting MIS summands and reducing the correction term}
\label{sec:decomposition}

This section studies the correction term in the dimension formula.  We first define dimensional stability within the structural class and prove that the Euler characteristic term is constant on this class.  We then use a quotient representation indexed by the MISs to remove summands generated by vertex relations.  The MISs that remain form the weighted CNDC.  Finally, we express the correction term through the remaining relation space and derive a stability criterion and an upper bound.

Throughout the section, the reference pair $(\mathcal T,\mathbf r)$ and the ordered bi-degree $(m,m')$ are fixed, and $\mathbf r$ is constant along each maximal segment.  Comparisons between different realizations are made only in the class $[\mathcal T,\mathbf r]_{\mathrm I}^{+}$.

For the deletion process, one realization is fixed.  At every stage, the cells, edges, vertices, smoothness ideals, and chain complexes of $\mathcal T$ are unchanged.  Only the subset $\mathcal A\subseteq\operatorname{MIS}(\mathcal T)$ indexing the retained summands is updated.  In particular, the Euler characteristic term $\chi(\mathcal Q_{m,m'}^{\mathbf r}(\mathcal T^\circ))$ is never recomputed during the deletion algorithm.

\subsection{Dimensional stability within a structural class}
\label{subsec:dimension-classes}

To measure changes of dimension within the structural class, we compare every
pair with the fixed reference pair $(\mathcal T,\mathbf r)$.

\begin{definition}[Dimension classes]
\label{def:dimension-classes}
For $k\in\mathbb Z$, define
\[
[k]_{(\mathcal T,\mathbf r)}
:=
\left\{
(\mathcal T',\mathbf r')\in[\mathcal T,\mathbf r]_{\mathrm I}^{+}:
\dim \mathcal S_{m,m'}^{\mathbf r'}(\mathcal T')
-
\dim \mathcal S_{m,m'}^{\mathbf r}(\mathcal T)
=
k
\right\}.
\]
If this set is nonempty, it is called the $k$-th dimension class relative to the
reference pair $(\mathcal T,\mathbf r)$.
\end{definition}

The zero class contains the realizations with the same spline dimension as the
reference pair.  This leads to the stability definition used below.

\begin{definition}[Dimensional stability]
\label{def:dimensional-stability}
For the fixed ordered bi-degree $(m,m')$, the reference pair
$(\mathcal T,\mathbf r)$ is called dimensionally stable if
\[
[\mathcal T,\mathbf r]_{\mathrm I}^{+}
=
[0]_{(\mathcal T,\mathbf r)}.
\]
Equivalently,
\[
\dim \mathcal S_{m,m'}^{\mathbf r'}(\mathcal T')
=
\dim \mathcal S_{m,m'}^{\mathbf r}(\mathcal T)
\]
for every
$(\mathcal T',\mathbf r')\in[\mathcal T,\mathbf r]_{\mathrm I}^{+}$.
Otherwise the reference pair is called dimensionally unstable.
\end{definition}

The definition allows any smoothness distribution that is constant along each
maximal segment; it is not restricted to highest-order smoothness.  A structural isomorphism fixes
the incidences and relative order of corresponding maximal segments, while the
definition of $[\mathcal T,\mathbf r]_{\mathrm I}^{+}$ fixes the smoothness order
on each corresponding edge.  A change of dimension within this class is therefore
due to the geometric realization, not to a change in the prescribed smoothness.

By Theorem~\ref{thm:homological-dimension-formula}, for every
$(\mathcal T',\mathbf r')\in[\mathcal T,\mathbf r]_{\mathrm I}^{+}$,
\[
\dim \mathcal S_{m,m'}^{\mathbf r'}(\mathcal T')
=
\chi\!\left(
\mathcal Q_{m,m'}^{\mathbf r'}((\mathcal T')^\circ)
\right)
+
\dim H_0\!\left(
\mathcal I_{m,m'}^{\mathbf r'}((\mathcal T')^\circ)
\right).
\]

The next lemma explains why only the correction homology needs to be compared.

\begin{lemma}[Euler characteristic invariance]
\label{lem:euler-characteristic-structural-invariance}
Let
$(\mathcal T_1,\mathbf r_1),(\mathcal T_2,\mathbf r_2)
\in[\mathcal T,\mathbf r]_{\mathrm I}^{+}$.  Then
\[
\chi\!\left(
\mathcal Q_{m,m'}^{\mathbf r_1}(\mathcal T_1^\circ)
\right)
=
\chi\!\left(
\mathcal Q_{m,m'}^{\mathbf r_2}(\mathcal T_2^\circ)
\right).
\]
\end{lemma}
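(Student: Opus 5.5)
It suffices to compare each pair in the class with the reference pair $(\mathcal T,\mathbf r)$: if both $(\mathcal T_1,\mathbf r_1)$ and $(\mathcal T_2,\mathbf r_2)$ have the same Euler characteristic term as $(\mathcal T,\mathbf r)$, they have the same term as each other. So fix $(\mathcal T',\mathbf r')\in[\mathcal T,\mathbf r]_{\mathrm I}^{+}$ together with a direction-preserving structural isomorphism $\alpha$ satisfying $\mathbf r'(\alpha_1(\tau))=\mathbf r(\tau)$ on interior edges. We then compare the explicit formula of Theorem~\ref{thm:homological-dimension-formula} for the two pairs term by term.

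\textbf{Cell and edge terms.} By Proposition~\ref{prop:induced-tmesh-isomorphism}, $\alpha$ induces bijections $\alpha_2:\mathcal T_2\to\mathcal T_2'$, $\alpha_1:\mathcal T_1\to\mathcal T_1'$ and $\alpha_0$ that preserve boundary and interior types. Because $\alpha$ is direction-preserving, $\alpha_1$ maps $\mathcal T_1^{\circ,h}$ onto $(\mathcal T')_1^{\circ,h}$ and $\mathcal T_1^{\circ,v}$ onto $(\mathcal T')_1^{\circ,v}$. Hence $|\mathcal T_2|=|\mathcal T_2'|$. Reindexing the two edge sums by $\alpha_1$ and using $\mathbf r'(\alpha_1(\tau))=\mathbf r(\tau)$, the horizontal edge sums agree summand by summand, and so do the vertical ones. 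The direction-preservation hypothesis is essential at this step: if $\alpha$ interchanged directions, the factors $(m+1)$ and $(m'+1)$ would swap, and the sums would in general differ.

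\textbf{Vertex terms.} This is the only step requiring a little care. First I will check that $\alpha_0$ maps $\mathcal T_0^\circ$ onto $(\mathcal T')_0^\circ$. A vertex is interior exactly when it lies on no boundary maximal segment. Since $\alpha$ preserves the type $\operatorname{BMS}$ and preserves incidence, this property transfers to the image vertex.

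Next I will check $r_h'(\alpha_0(\gamma))=r_h(\gamma)$ and $r_v'(\alpha_0(\gamma))=r_v(\gamma)$. By definition $\alpha_0(\gamma)=\alpha(\rho_h(\gamma))\cap\alpha(\rho_v(\gamma))$, and direction preservation gives $\rho_v'(\alpha_0(\gamma))=\alpha(\rho_v(\gamma))$. Now choose an interior edge $\tau\subset\rho_v(\gamma)$ incident to $\gamma$. Its image $\alpha_1(\tau)$ is an interior edge lying on $\alpha(\rho_v(\gamma))$, because $\alpha_1$ is built from consecutive vertices on corresponding segments. Constancy of $\mathbf r'$ along maximal segments then gives
\[
r'\bigl(\alpha(\rho_v(\gamma))\bigr)=\mathbf r'(\alpha_1(\tau))=\mathbf r(\tau)=r(\rho_v(\gamma)).
\]
The same argument applies to $\rho_h$. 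There is one subtlety: the vertical segment through an interior vertex could be a boundary maximal segment. This cannot happen, however, since an interior vertex lies on no boundary segment, so both segments through $\gamma$ are interior maximal segments and $r$ is defined on them.

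Reindexing the vertex sum by $\alpha_0$ then shows that the vertex sums agree. Combining the cell, edge and vertex terms proves that the two Euler characteristic terms are equal, which completes the proof.
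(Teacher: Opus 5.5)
Your proof is correct and follows essentially the same route as the paper: a term-by-term comparison of the explicit Euler characteristic formula, using the cell, edge and vertex bijections of Proposition~\ref{prop:induced-tmesh-isomorphism}, equal smoothness orders on corresponding edges, and equal pairs $(r_h,r_v)$ at corresponding vertices. The differences are cosmetic. You compare each pair with the reference pair rather than composing $\alpha_2\circ\alpha_1^{-1}$, and you spell out checks the paper only asserts: that interior vertices go to interior vertices, and that $(r_h,r_v)$ is preserved via an incident interior edge.
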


\begin{proof}
Choose direction-preserving structural isomorphisms
$\alpha_i:\mathcal T\to\mathcal T_i$, $i=1,2$, that satisfy the smoothness
condition in Definition~\ref{def:structural-class}.  Then
$\alpha_2\circ\alpha_1^{-1}$ is a direction-preserving structural isomorphism
from $\mathcal T_1$ to $\mathcal T_2$ and preserves the smoothness orders on
corresponding edges.  By
Proposition~\ref{prop:induced-tmesh-isomorphism}, corresponding cells, horizontal
interior edges, vertical interior edges, and interior vertices are in bijection.
The smoothness orders agree on corresponding edges, and the pairs
$\bigl(r_h(\gamma),r_v(\gamma)\bigr)$ agree at corresponding vertices.  Each
summand in the Euler characteristic formula of
Theorem~\ref{thm:homological-dimension-formula} is therefore the same on the two
realizations.
\end{proof}

Consequently, the difference between the spline dimensions of two pairs in the structural class is exactly the difference between their correction terms.  Dimensional stability can therefore be studied by considering only the correction term.  This comparison of different realizations is separate from the deletion process carried out below on each fixed T-mesh.

\subsection{Maximal-segment relations and deletion of one summand}
\label{subsec:relative-weights-deletion}

The deletion theorem uses a quotient whose summands are indexed by maximal interior segments.  We recall this representation from \citet{Mourrain2014}.

For an MIS $\rho$, let $\ell_\rho\in R$ be a nonzero affine linear polynomial
vanishing on the line supporting $\rho$.  If $\rho$ is horizontal, then
$\ell_\rho$ is a polynomial in the vertical variable; if $\rho$ is vertical, then
$\ell_\rho$ is a polynomial in the horizontal variable.  For a vertex
$\gamma=\rho_h\cap\rho_v$, where $\rho_h$ is horizontal and $\rho_v$ is
vertical, the two powers appearing at $\gamma$ are
\[
    \ell_{\rho_v}^{\,r_h(\gamma)+1},
    \qquad
    \ell_{\rho_h}^{\,r_v(\gamma)+1}.
\]
Here $r_h(\gamma)$ is the smoothness imposed by the vertical edges through
$\gamma$, and $r_v(\gamma)$ is the smoothness imposed by the horizontal edges
through $\gamma$.

\begin{proposition}[Quotient representation of the correction term {\cite{Mourrain2014}}]
\label{prop:mis-representation}
The correction homology can be written as the following quotient over the
maximal interior segments:
\[
    H_0\!\left(
    \mathcal I_{m,m'}^{\mathbf r}(\mathcal T^\circ)
    \right)
    \cong
    \left(
    \bigoplus_{\rho\in\operatorname{MIS}(\mathcal T)}
    [\rho]\mathcal B_\rho
    \right)
    /
    \mathcal R(\mathcal T).
\]
Because $\mathbf r$ is constant along each maximal segment, the coefficient space attached to an MIS $\rho$ is
\[
    \mathcal B_\rho
    =
    \begin{cases}
    R_{m,m'-r(\rho)-1},
    &
    \rho\in\operatorname{MIS}_h(\mathcal T),\\[1mm]
    R_{m-r(\rho)-1,m'},
    &
    \rho\in\operatorname{MIS}_v(\mathcal T).
    \end{cases}
\]
Indeed, all edge ideals on $\rho$ contain the same factor
$\ell_\rho^{\,r(\rho)+1}$.  After removing this common factor, the remaining
coefficient space is $\mathcal B_\rho$.  This is where we use the assumption that the smoothness order is constant along
each maximal segment.

The subspace $\mathcal R(\mathcal T)$ is generated by the vertex relations.  If
\[
    \gamma=\rho_h\cap\rho_v
\]
is an interior vertex lying on a horizontal MIS $\rho_h$ and a vertical MIS
$\rho_v$, then the corresponding relation has the form
\[
    [\rho_v]\ell_{\rho_h}^{\,r(\rho_h)+1}q
    -
    [\rho_h]\ell_{\rho_v}^{\,r(\rho_v)+1}q,
\]
where
\[
    q\in R_{m-r(\rho_v)-1,\,m'-r(\rho_h)-1}.
\]
Equivalently, using the vertex notation introduced above, this is
\[
    [\rho_v]\ell_{\rho_h}^{\,r_v(\gamma)+1}q
    -
    [\rho_h]\ell_{\rho_v}^{\,r_h(\gamma)+1}q.
\]
The relation records the fact that the two edge ideals meeting at $\gamma$
generate the vertex ideal
\[
    I_\gamma^{\mathbf r}
    =
    \left\langle
    \ell_{\rho_v}^{\,r(\rho_v)+1},
    \ell_{\rho_h}^{\,r(\rho_h)+1}
    \right\rangle
    \cap R_{m,m'} .
\]

If $\gamma$ is a mono-vertex of an MIS $\rho$, the perpendicular maximal
segment through $\gamma$ meets the boundary and does not give an MIS summand in
the quotient.  The corresponding vertex relation therefore
involves only the summand $[\rho]\mathcal B_\rho$.  These
one-term relations are included in $\mathcal R(\mathcal T)$ and are used together
with the two-term relations at intersections of two MISs.

\end{proposition}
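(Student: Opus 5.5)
The plan is to compute $H_0(\mathcal I_{m,m'}^{\mathbf r}(\mathcal T^\circ))$ directly as the cokernel of $\widehat\partial_1$, and to reorganize both sides by maximal segments instead of by vertices. The first step is a separation-of-variables computation. For a horizontal maximal segment $\rho$ with $k=r(\rho)+1$, the polynomial $\ell_\rho$ depends only on $t$, so
\[
\langle \ell_\rho^{k}\rangle\cap R_{m,m'}=\ell_\rho^{k}R_{m,m'-k},
\]
and symmetrically in the vertical case. Since $\mathbf r$ is constant along $\rho$, every edge $\tau\subset\rho$ has the same ideal $I_\rho:=I_\tau^{\mathbf r}$, and division by $\ell_\rho^{k}$ gives $I_\rho\cong\mathcal B_\rho$. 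Similarly, at a vertex $\gamma=\rho_h\cap\rho_v$ with exponents $a=r(\rho_v)+1$ and $b=r(\rho_h)+1$, expanding in the bases $\{\ell_{\rho_v}^i\}$ in $s$ and $\{\ell_{\rho_h}^j\}$ in $t$ gives
\[
I_{\rho_v}\cap I_{\rho_h}=\ell_{\rho_v}^{a}\ell_{\rho_h}^{b}R_{m-a,m'-b}.
\]

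Next, introduce the auxiliary space $P:=\bigoplus_{\gamma\in\mathcal T_0^\circ}[\gamma](I_{\rho_h(\gamma)}\oplus I_{\rho_v(\gamma)})$. It has a surjection $\Phi:P\to\bigoplus_\gamma[\gamma]I_\gamma^{\mathbf r}$ given by the sum map. By the intersection formula, $\ker\Phi$ consists exactly of the elements $[\gamma](\ell_{\rho_v}^{a}\ell_{\rho_h}^{b}q,\,-\ell_{\rho_v}^{a}\ell_{\rho_h}^{b}q)$ with $q\in R_{m-a,m'-b}$. Regrouping $P$ by segments gives $P=\bigoplus_{\rho\in\operatorname{MS}(\mathcal T)}\bigoplus_{\gamma\in\rho\cap\mathcal T_0^\circ}[\gamma]I_\rho$. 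The map $\widehat\partial_1$ lifts to $P$ segment by segment, sending $[\tau]f$ with $\tau\subset\rho$ to $[\gamma_+]f-[\gamma_-]f$ in the $\rho$-component; this lift is compatible with $\Phi$.

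Then compute the cokernel of this lift on each segment, which is a path graph whose interior vertices are the interior points of $\rho$. If $\rho$ is an MIS, it has $n$ vertices and $n-1$ edges, all vertices interior, and the cokernel is $I_\rho$ via the sum of coefficients. If $\rho$ is a ray or a cross-cut, the boundary endpoints contribute zero, so the telescoping differences already span everything and the cokernel is $0$. By right exactness, applied to the composite surjections,
\[
H_0\cong \operatorname{coker}\bigl(\ker\Phi\oplus\operatorname{im}(\text{lift})\to P\bigr)\cong\Bigl(\bigoplus_{\rho\in\operatorname{MIS}}[\rho]I_\rho\Bigr)\Big/\overline{\ker\Phi},
\]
where $\overline{\ker\Phi}$ is the image of $\ker\Phi$. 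Each kernel generator maps to $[\rho_v]\ell^{a}_{\rho_v}\ell_{\rho_h}^{b}q-[\rho_h]\ell^{a}_{\rho_v}\ell_{\rho_h}^{b}q$. Its component on a non-MIS segment vanishes, which produces the one-term relations at mono-vertices. Finally, transport everything through $I_\rho\cong\mathcal B_\rho$ by dividing each component by its own factor $\ell_\rho^{r(\rho)+1}$; this yields exactly the stated relations $[\rho_v]\ell_{\rho_h}^{r(\rho_h)+1}q-[\rho_h]\ell_{\rho_v}^{r(\rho_v)+1}q$.

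I expect the main obstacle to be the bookkeeping in the right-exactness step. One must check that the cokernel of $\widehat\partial_1$ equals the cokernel of $P$ modulo the sum of $\ker\Phi$ and the lifted edge image, which holds since $\Phi$ is surjective and the lift commutes with $\Phi$. One must also check that collapsing the vertices on each segment commutes with taking the image of $\ker\Phi$. The intersection formula for the truncated ideals is the other point needing care, but it follows from the tensor-product structure $R_{m,m'}=\mathbb R[s]_{\le m}\otimes\mathbb R[t]_{\le m'}$.
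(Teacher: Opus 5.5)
Your proposal is correct and complete. The paper does not prove this proposition itself: it cites \citet{Mourrain2014} and only points to two ingredients, namely removing the common factor $\ell_\rho^{\,r(\rho)+1}$ and the fact that the two edge ideals generate $I_\gamma^{\mathbf r}$. Your argument makes those two hints rigorous:
\begin{itemize}
\item the sum map $\Phi$ has kernel $\ell_{\rho_v}^{\,a}\ell_{\rho_h}^{\,b}R_{m-a,m'-b}$;
\item $\operatorname{coker}\widehat\partial_1$ is identified with $P$ modulo the lifted edge image plus $\ker\Phi$;
\item the segment-wise cokernels equal $I_\rho$ on an MIS and vanish on rays and cross-cuts.
\end{itemize}
Together these also derive the one-term relations at mono-vertices and the exact range of $q$, which the paper simply asserts.
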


After some MIS summands have been removed, every relation among the remaining
summands must still be retained, including relations obtained by eliminating the
removed summands.  We therefore use the following definition.

For a subset $\mathcal A\subseteq\operatorname{MIS}(\mathcal T)$, set
\[
    \mathcal M(\mathcal A)
    :=
    \bigoplus_{\rho\in\mathcal A}[\rho]\mathcal B_\rho
\]
and define the relation space on $\mathcal A$ by
\[
    \mathcal R_{\mathcal A}(\mathcal T)
    :=
    \mathcal M(\mathcal A)\cap\mathcal R(\mathcal T).
\]
The correction space associated with $\mathcal A$ is
\[
    \mathcal H_{m,m'}^{\mathbf r}(\mathcal A;\mathcal T)
    :=
    \mathcal M(\mathcal A)/\mathcal R_{\mathcal A}(\mathcal T).
\]
The intersection with the full relation space is essential.  It can be larger
than the span of the original vertex relations that involve only summands in
$\mathcal A$.  By combining relations and eliminating summands outside
$\mathcal A$, one may obtain additional relations among the retained summands.
The space $\mathcal R_{\mathcal A}(\mathcal T)$ contains all of these relations.

For the full MIS set,
\[
    \mathcal H_{m,m'}^{\mathbf r}
    \bigl(\operatorname{MIS}(\mathcal T);\mathcal T\bigr)
    \cong
    H_0\!\left(
    \mathcal I_{m,m'}^{\mathbf r}(\mathcal T^\circ)
    \right).
\]

For later use, we also give a matrix description of this correction space.  Fix a basis
of $\mathcal M(\mathcal A)$ and choose any finite spanning set of
$\mathcal R_{\mathcal A}(\mathcal T)$.  Let $K_{\mathcal A}(\mathcal T)$ be the
matrix whose columns are the coordinates of that spanning set.  Then
\[
    \operatorname{rank}K_{\mathcal A}(\mathcal T)
    =
    \dim\mathcal R_{\mathcal A}(\mathcal T)
\]
and hence
\[
    \dim\mathcal H_{m,m'}^{\mathbf r}(\mathcal A;\mathcal T)
    =
    \dim\mathcal M(\mathcal A)
    -
    \operatorname{rank}K_{\mathcal A}(\mathcal T).
\]
The matrix is not unique, but its rank is.  We introduce it only to express the
rank condition used below.  In particular, when $\mathcal A$ is the weighted CNDC, the matrix
$K_{\mathcal A}(\mathcal T)$ may still involve coefficients arising from MISs
that were removed, because the relation space is defined using
$\mathcal R(\mathcal T)$.

\begin{explanatoryremark}[Retaining a subset of MIS summands does not change the T-mesh]
A subset $\mathcal A\subseteq\operatorname{MIS}(\mathcal T)$ only specifies which MIS summands are retained in the quotient representation.  It is not a new T-mesh and does not define a new quotient complex.  In particular, no cell is merged and no edge or vertex is removed from $\mathcal T$.  The semicolon in
\[
\mathcal H_{m,m'}^{\mathbf r}(\mathcal A;\mathcal T)
\]
records that the correction space is still formed using the full relation space of the original T-mesh.  Therefore changing $\mathcal A$ has no effect on $\chi(\mathcal Q_{m,m'}^{\mathbf r}(\mathcal T^\circ))$.
\end{explanatoryremark}

To decide whether one MIS summand can be removed, we must specify which vertex relations are available at that step.  We therefore choose a total order of the full MIS set.  An order on a proper subset is not sufficient, because an intersection with an omitted MIS need not give a relation among the currently retained summands.

\begin{definition}[Relative vertices and relative weight]
\label{def:relative-weight}
Let
\[
\iota:\quad
\rho_1\succ\rho_2\succ\cdots\succ\rho_t
\]
be a total order of
\[
\operatorname{MIS}(\mathcal T)
=
\{\rho_1,\ldots,\rho_t\}.
\]
The relative vertex set of $\rho_i$ with respect to $\iota$ is
\[
\Gamma_{\iota}(\rho_i)
:=
\left\{
\gamma\in\rho_i:
\gamma\notin\rho_j
\text{ for all }j<i
\right\}.
\]
Equivalently, $\Gamma_{\iota}(\rho_i)$ consists of the original mono-vertices of
$\rho_i$ together with the intersection vertices of $\rho_i$ with the MISs that
come after $\rho_i$ in the total order.

For an interior vertex $\gamma$, set
\[
\mu_h(\gamma):=\bigl(m-r_h(\gamma)\bigr)_+,
\qquad
\mu_v(\gamma):=\bigl(m'-r_v(\gamma)\bigr)_+,
\]
where $(a)_+=\max\{a,0\}$.  The subscript $h$ refers to the polynomial degree in
the horizontal variable and therefore uses the smoothness imposed by the
vertical maximal segment; the analogous convention applies to $\mu_v$.
The relative weight of $\rho_i$ is
\[
\omega_{\iota}(\rho_i)
=
\begin{cases}
\displaystyle
\sum_{\gamma\in\Gamma_{\iota}(\rho_i)}
\mu_h(\gamma),
&
\rho_i\in\operatorname{MIS}_h(\mathcal T),
\\[4mm]
\displaystyle
\sum_{\gamma\in\Gamma_{\iota}(\rho_i)}
\mu_v(\gamma),
&
\rho_i\in\operatorname{MIS}_v(\mathcal T).
\end{cases}
\]
The corresponding threshold is
\[
\theta(\rho_i)
=
\begin{cases}
m+1,&\rho_i\in\operatorname{MIS}_h(\mathcal T),\\
m'+1,&\rho_i\in\operatorname{MIS}_v(\mathcal T).
\end{cases}
\]
\end{definition}

\paragraph{Example.}
Consider the two MISs in Fig.~\ref{fig:basic-tmesh}.  Let
\[
    \rho_h=[v_1,v_2],
    \qquad
    \rho_v=[v_3,v_2]\cup[v_2,v_4].
\]
Assume for simplicity that $m=m'=d$ and that the smoothness is of highest order,
so that every vertex has weight one.  If the order is
\[
    \rho_h\succ\rho_v,
\]
then
\[
    \Gamma_{\iota}(\rho_h)=\{v_1,v_2\},
    \qquad
    \Gamma_{\iota}(\rho_v)=\{v_3,v_4\}.
\]
Thus the relative weights are
\[
    \omega_{\iota}(\rho_h)=2,
    \qquad
    \omega_{\iota}(\rho_v)=2.
\]
If the order is reversed,
\[
    \rho_v\succ\rho_h,
\]
then
\[
    \Gamma_{\iota}(\rho_v)=\{v_3,v_2,v_4\},
    \qquad
    \Gamma_{\iota}(\rho_h)=\{v_1\}.
\]
Thus
\[
    \omega_{\iota}(\rho_v)=3,
    \qquad
    \omega_{\iota}(\rho_h)=1.
\]
This example shows that the relative weight is not attached only to the MIS
itself; it also depends on the chosen order.

The proof of the deletion theorem uses the following one-dimensional fact.  We
state it separately so that the hypotheses used in the proof are explicit.

\begin{lemma}[Univariate generation formula {\cite{Mourrain2014}}]
\label{lem:univariate-generation}
Let $a_1,\ldots,a_q$ be distinct real numbers, let $d\ge0$, and let
$r_1,\ldots,r_q\ge -1$.  With the convention
$\mathbb R[u]_{\le n}=\{0\}$ for $n<0$,
\[
\dim\!\left(
\mathbb R[u]_{\le d}
/
\sum_{j=1}^{q}
(u-a_j)^{r_j+1}\mathbb R[u]_{\le d-r_j-1}
\right)
=
\left(
d+1-\sum_{j=1}^{q}(d-r_j)_+
\right)_+ .
\]
The statement also covers $r_j\ge d$, for which the corresponding summand is
zero.
\end{lemma}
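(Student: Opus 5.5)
The plan is to prove that the subspace being divided out always has the largest dimension its pieces allow. Write $e_j:=r_j+1$, $P_d:=\mathbb R[u]_{\le d}$ and
\[
U_j:=(u-a_j)^{e_j}\mathbb R[u]_{\le d-e_j}\subseteq P_d ,\qquad c_j:=\dim U_j=(d-r_j)_+ .
\]
Indices with $r_j\ge d$ give $U_j=\{0\}$ and $c_j=0$, so they change neither side and can be dropped. We may therefore assume $0\le e_j\le d$ for all $j$; the index $e_j=0$ gives $U_j=P_d$, and the formula then holds trivially. Since the left-hand side of the statement is $d+1-\dim\sum_jU_j$, it suffices to prove
\[
\dim\sum_{j=1}^q U_j=\min\Bigl\{d+1,\ \sum_{j=1}^q c_j\Bigr\}.
\]
Equivalently, the multiplication map
\[
\Phi:\ \bigoplus_{j}\mathbb R[u]_{\le d-e_j}\to P_d,\qquad (q_j)_j\mapsto \sum_j (u-a_j)^{e_j}q_j ,
\]
has maximal rank: it is injective when $\sum_j c_j\le d+1$ and surjective when $\sum_j c_j\ge d+1$.

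For injectivity, suppose $\sum_j c_j\le d+1$ and $\sum_j(u-a_j)^{e_j}q_j=0$. Set $F:=\prod_j(u-a_j)^{e_j}$ and $h_j:=F/(u-a_j)^{e_j}$. The powers $(u-a_j)^{e_j}$ are pairwise coprime because the $a_j$ are distinct. Reducing the relation modulo $(u-a_j)^{e_j}$ and arguing as in the Chinese remainder theorem shows that every syzygy of $\bigl((u-a_1)^{e_1},\dots,(u-a_q)^{e_q}\bigr)$ is an $R$-combination of the Koszul-type syzygies $h_{i,j}:=(u-a_i)^{e_i}\mathbf e_j-(u-a_j)^{e_j}\mathbf e_i$. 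Concretely, I will use that $q_j$ must be divisible by $\prod_{i\ne j}(u-a_i)^{e_i}$ modulo combinations of other components. I would first attempt the cleanest version by induction on $q$. For $q=2$, the relation $(u-a_1)^{e_1}q_1=-(u-a_2)^{e_2}q_2$ forces $(u-a_2)^{e_2}\mid q_1$. Then $\deg q_1\ge e_2$, which is impossible when $d-e_1<e_2$, i.e. when $c_1+c_2\le d+1$. So the sum is direct exactly in the expected range. For the induction step, treat $V':=\sum_{j<q}U_j$ as known (either direct or all of $P_d$) and intersect with $U_q$.

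For surjectivity, suppose $\sum_j c_j\ge d+1$. I will pass to the dual. The annihilator of $U_j$ in $P_d^*$ is $W_j$, the span of the jet functionals $p\mapsto p^{(k)}(a_j)$ for $k<e_j$. This holds because $U_j$ is exactly the set of polynomials in $P_d$ vanishing to order $e_j$ at $a_j$. Hence
\[
\Bigl(\sum_j U_j\Bigr)^{\perp}=\bigcap_j W_j .
\]
A functional in $\bigcap_jW_j$ is then a combination of jets of order $<e_j$ at $a_j$, for every $j$ simultaneously. For two distinct indices, a nonzero common functional would give a linear dependence among jets at two distinct points of total count $e_i+e_j$. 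By Hermite interpolation such a dependence exists only if $e_i+e_j>d+1$, and the space of dependences has dimension $e_i+e_j-(d+1)$. This gives
\[
\dim(W_i\cap W_j)=\bigl(e_i+e_j-d-1\bigr)_+=\bigl(d+1-c_i-c_j\bigr)_+ .
\]
Iterating this in $j$ via the induction (each new $W_q$ is again a jet space at a new point) yields $\dim\bigcap_jW_j=\bigl(d+1-\sum_jc_j\bigr)_+$, which is the claimed formula. It also gives the injective case in dual form.

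The main obstacle is the induction step for $q\ge3$. Once $V'$ or $\bigcap_{j<q}W_j$ is no longer a pure jet space at a single point, the two-point Hermite count does not apply verbatim. I would handle this by the \emph{generalized} Hermite statement: the jet functionals at distinct points of total multiplicity $N$ span a subspace of $P_d^*$ of dimension $\min\{N,d+1\}$. From this, and from the dependence space having dimension $(N-d-1)_+$, I would compute $\dim\bigcap_jW_j$ by inclusion–exclusion over the kernel of the map $\bigoplus_j W_j\to P_d^*$. If that bookkeeping becomes unwieldy, I fall back on the syzygy description above: the syzygy module of the pairwise coprime powers is free of rank $q-1$ with explicitly known degrees, and a direct count of its graded pieces in degree $\le d$ gives $\dim\ker\Phi=\bigl(\sum_jc_j-(d+1)\bigr)_+$.
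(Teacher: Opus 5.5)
Your reduction to $\dim\sum_jU_j=\min\{d+1,\sum_jc_j\}$, the case $q=2$, and the identification $U_j^{\perp}=W_j$ are all correct. The gap is the case $q\ge3$, which is the real content of the lemma: it is not proved, and neither of your proposed routes closes it.

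\emph{The dual induction.} The two-index formula works only because $\dim(W_i\cap W_j)=\dim W_i+\dim W_j-\dim(W_i+W_j)$. For three or more subspaces there is no such identity. In general, $\dim\bigcap_jW_j$ is not determined by the dimensions of the partial sums $\sum_{j\in S}W_j$, nor by the kernel of $\bigoplus_jW_j\to P_d^*$. For example, three distinct planes in $\mathbb R^3$ through a common line, and three in general position, have identical sum and kernel data, but their triple intersections have dimensions $1$ and $0$. So the generalized Hermite count combined with ``inclusion--exclusion'' cannot by itself yield $\dim\bigcap_jW_j$. ``Iterating in $j$'' also fails, because $\bigcap_{j<q}W_j$ is no longer a jet space, as you note.

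\emph{The syzygy fallback.} This route is circular. Counting syzygies $(q_j)$ with $\deg q_j\le d-e_j$ requires a basis adapted to this degree filtration. That means the graded syzygy degrees of the homogenized ideal $\bigl((x-a_1y)^{e_1},\dots,(x-a_qy)^{e_q}\bigr)$. Those degrees are equivalent to its Hilbert function in degree $d$, which is exactly the quantity you want. In the ungraded ring $\mathbb R[u]$ the Koszul relations do generate, but a generating set gives no control of the degree bounds.

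The missing idea is to apply Hermite interpolation to the $U_j$ themselves, not to their annihilators.
\begin{itemize}
\item Note that $U_j=\operatorname{span}\{(u-a_j)^i: e_j\le i\le d\}$.
\item Define $\Psi:P_d^*\to P_d$ by $\Psi(\lambda)=\lambda_v\bigl((u-v)^d\bigr)$, where $\lambda$ acts in the variable $v$. It is bijective because $(u-v)^d=\sum_i\binom di(-1)^{d-i}u^iv^{d-i}$.
\item For the jet $\delta_a^{(k)}:f\mapsto f^{(k)}(a)$ one gets $\Psi(\delta_a^{(k)})=(-1)^k\frac{d!}{(d-k)!}(u-a)^{d-k}$. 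Hence $\Psi$ maps the span of $\delta_{a_j}^{(k)}$, $0\le k<c_j$, onto $U_j$.
\item Therefore $\dim\sum_jU_j$ equals the dimension of the span of jets of total multiplicity $\sum_jc_j$ at distinct points. By exactly the generalized Hermite statement you quote, this is $\min\{\sum_jc_j,d+1\}$. No induction on $q$ is needed.
\end{itemize}
Equivalently, in your dual picture, $\Psi(W_j)=\operatorname{span}\{(u-a_j)^i: c_j\le i\le d\}$, which is the set of elements of $P_d$ divisible by $(u-a_j)^{c_j}$. By pairwise coprimality, $\Psi\bigl(\bigcap_jW_j\bigr)=\prod_j(u-a_j)^{c_j}\,\mathbb R[u]_{\le d-\sum_jc_j}$, which has dimension $(d+1-\sum_jc_j)_+$. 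The paper itself does not prove the lemma; it cites Mourrain, so your argument has to stand on its own.
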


This is the univariate generation formula used in the maximal-segment estimates
of \citet{Mourrain2014}.  In the proof below the points $a_j$ are the distinct
coordinates of the relative vertices on one MIS, and the integers $r_j$ are the
smoothness orders on the perpendicular maximal segments.

\begin{theorem}[Deletion of an MIS summand]
\label{thm:homological-deletion}
Let
\[
\iota:\rho_1\succ\rho_2\succ\cdots\succ\rho_t
\]
be a total order of $\operatorname{MIS}(\mathcal T)$, and set
\[
\mathcal A_i:=\{\rho_i,\rho_{i+1},\ldots,\rho_t\}.
\]
If
\[
\omega_{\iota}(\rho_i)\ge\theta(\rho_i),
\]
then the inclusion
$\mathcal M(\mathcal A_{i+1})\hookrightarrow\mathcal M(\mathcal A_i)$
induces an isomorphism
\[
\mathcal H_{m,m'}^{\mathbf r}(\mathcal A_{i+1};\mathcal T)
\cong
\mathcal H_{m,m'}^{\mathbf r}(\mathcal A_i;\mathcal T).
\]
Here $\mathcal A_{t+1}:=\varnothing$.  The theorem removes only the summand indexed by $\rho_i$ from the quotient representation; all relations induced among the remaining summands are retained.  The segment $\rho_i$ is not removed from the T-mesh itself.  Thus the cells, edges, vertices, quotient complex, and Euler characteristic term are unchanged.
\end{theorem}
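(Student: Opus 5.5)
The plan is to show that the induced map $\mathcal M(\mathcal A_{i+1})/\mathcal R_{\mathcal A_{i+1}}(\mathcal T)\to\mathcal M(\mathcal A_i)/\mathcal R_{\mathcal A_i}(\mathcal T)$ is injective and surjective.

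Injectivity is formal. Since $\mathcal M(\mathcal A_{i+1})\subseteq\mathcal M(\mathcal A_i)$, we have
\[
\mathcal M(\mathcal A_{i+1})\cap\mathcal R_{\mathcal A_i}(\mathcal T)
=\mathcal M(\mathcal A_{i+1})\cap\mathcal R(\mathcal T)
=\mathcal R_{\mathcal A_{i+1}}(\mathcal T).
\]
This is exactly where the definition of $\mathcal R_{\mathcal A}$ as an intersection with the full relation space is used.

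Surjectivity is the content of the theorem. It amounts to showing
\[
[\rho_i]\mathcal B_{\rho_i}\subseteq\mathcal M(\mathcal A_{i+1})+\mathcal R_{\mathcal A_i}(\mathcal T).
\]
Every relation generated at a vertex $\gamma\in\Gamma_\iota(\rho_i)$ involves only $\rho_i$ and possibly one MIS $\rho_j$ with $j>i$. Such a relation therefore already lies in $\mathcal M(\mathcal A_i)\cap\mathcal R(\mathcal T)=\mathcal R_{\mathcal A_i}(\mathcal T)$. It suffices to show that the $[\rho_i]$-components of these relations span all of $[\rho_i]\mathcal B_{\rho_i}$.

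Assume $\rho_i$ is horizontal; the vertical case is symmetric, with $s$ and $t$ and $m$ and $m'$ interchanged. Write $\ell_{\rho_i}=t-c$, so $\mathcal B_{\rho_i}=R_{m,m'-r(\rho_i)-1}=\mathbb R[s]_{\le m}\otimes\mathbb R[t]_{\le m'-r(\rho_i)-1}$. A relative vertex $\gamma$ lies on a vertical segment $s=a_\gamma$, and the $a_\gamma$ are distinct for distinct vertices of $\rho_i$. In the one-term case (mono-vertex), the relation coming from the vertex ideal contributes to $[\rho_i]$ the elements $(s-a_\gamma)^{r_h(\gamma)+1}q$ with $q\in R_{m-r_h(\gamma)-1,\,m'-r(\rho_i)-1}$; I will check this against Proposition~\ref{prop:mis-representation} after dividing out $\ell_{\rho_i}^{r(\rho_i)+1}$. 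In the two-term case with $\rho_v=\rho_j$, $j>i$, the relation $[\rho_j]\ell_{\rho_i}^{r+1}q-[\rho_i]\ell_{\rho_j}^{r_h(\gamma)+1}q$ has $[\rho_i]$-component $(s-a_\gamma)^{r_h(\gamma)+1}q$ with the same range of $q$. Modulo $\mathcal M(\mathcal A_{i+1})$, each such relation gives exactly this component in $[\rho_i]\mathcal B_{\rho_i}$.

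The span of these components is therefore
\[
\Bigl(\sum_{\gamma\in\Gamma_\iota(\rho_i)}(s-a_\gamma)^{r_h(\gamma)+1}\mathbb R[s]_{\le m-r_h(\gamma)-1}\Bigr)\otimes\mathbb R[t]_{\le m'-r(\rho_i)-1}.
\]
By Lemma~\ref{lem:univariate-generation} with $d=m$, the quotient of $\mathbb R[s]_{\le m}$ by the inner sum has dimension $\bigl(m+1-\sum_\gamma\mu_h(\gamma)\bigr)_+=(\theta(\rho_i)-\omega_\iota(\rho_i))_+=0$. So the inner sum is all of $\mathbb R[s]_{\le m}$, and tensoring gives all of $\mathcal B_{\rho_i}$. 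Hence every $[\rho_i]b$ equals an element of $\mathcal R_{\mathcal A_i}(\mathcal T)$ plus an element of $\mathcal M(\mathcal A_{i+1})$, which proves surjectivity.

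The main obstacle I expect is bookkeeping rather than substance. I must verify precisely the form of the one-term mono-vertex relations in Mourrain's representation, in particular that the perpendicular segment meeting the boundary contributes the factor $(s-a_\gamma)^{r_h(\gamma)+1}$ with the stated coefficient range. I must also handle the conventions when $r_h(\gamma)\ge m$ or $r(\rho_i)\ge m'$, where the spaces are zero and the statement is trivial. Finally, I will check that vertices excluded from $\Gamma_\iota(\rho_i)$, namely those meeting earlier MISs, are genuinely not needed.
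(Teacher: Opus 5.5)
Your proposal is correct and follows the paper's proof essentially step for step. The kernel is identified via $\mathcal M(\mathcal A_{i+1})\cap\mathcal R_{\mathcal A_i}(\mathcal T)=\mathcal R_{\mathcal A_{i+1}}(\mathcal T)$, and surjectivity comes from showing, via Lemma~\ref{lem:univariate-generation}, that the $[\rho_i]$-components of the one-term and two-term relations at the relative vertices span all of $[\rho_i]\mathcal B_{\rho_i}$. The bookkeeping points you flag are settled as you expect: the form of the relations comes from Proposition~\ref{prop:mis-representation}, the degenerate cases are covered by the zero-space conventions, and vertices on earlier MISs are simply not used.
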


\begin{proof}
We prove the horizontal case; the vertical case follows by interchanging the two
variables.  Assume that $\rho_i$ is horizontal and write
\[
    r_i:=r(\rho_i).
\]
Let
\[
    \mathcal V_i:=\mathcal M(\mathcal A_i),
    \qquad
    \mathcal W_i:=
    \mathcal M(\mathcal A_{i+1}),
    \qquad
    \mathcal R_i:=\mathcal V_i\cap\mathcal R(\mathcal T).
\]
Then
\[
    \mathcal V_i=[\rho_i]\mathcal B_{\rho_i}\oplus\mathcal W_i.
\]
We show that the projection of $\mathcal R_i$ onto the first summand is
surjective and then use this fact to identify the two correction spaces.

\begin{enumerate}[label=\textup{(\arabic*)},leftmargin=*]
    \item \emph{The coefficient space on $\rho_i$.}
    Since the smoothness is constant along $\rho_i$,
    \[
        \mathcal B_{\rho_i}
        =R_{m,m'-r_i-1}
        =\mathbb R[s]_{\le m}\otimes
        \mathbb R[t]_{\le m'-r_i-1}.
    \]
    Thus, after removing the common factor
    $\ell_{\rho_i}^{\,r_i+1}$, the polynomial factor along $\rho_i$ is the
    full univariate space $\mathbb R[s]_{\le m}$, while the transverse factor
    is fixed.

    Write
    \[
        \Gamma_{\iota}(\rho_i)
        =\{\gamma_1,\ldots,\gamma_q\},
        \qquad
        \gamma_j=(s_j,t_0).
    \]
    The values $s_1,\ldots,s_q$ are pairwise distinct.

    \item \emph{Projection of the vertex relations.}
    For each $\gamma_j$, let $\eta_j$ be the vertical maximal segment through
    $\gamma_j$ and put $r_j:=r(\eta_j)=r_h(\gamma_j)$.  The line supporting
    $\eta_j$ has equation $s-s_j=0$.  The relation at $\gamma_j$ has
    $\rho_i$-component
    \[
        [\rho_i](s-s_j)^{r_j+1}q,
        \qquad
        q\in R_{m-r_j-1,\,m'-r_i-1}.
    \]
    If $\gamma_j$ is an original mono-vertex, this is a one-term relation.  If
    $\gamma_j$ is the intersection of $\rho_i$ with an MIS occurring after
    $\rho_i$ in the order $\iota$, both segment summands belong to
    $\mathcal V_i$.  In either case the relation belongs to $\mathcal R_i$.
    Hence the projection of $\mathcal R_i$ onto
    $[\rho_i]\mathcal B_{\rho_i}$ contains
    \[
        [\rho_i]\left(
        \sum_{\gamma_j\in\Gamma_{\iota}(\rho_i)}
        (s-s_j)^{r_j+1}\mathbb R[s]_{\le m-r_j-1}
        \right)
        \otimes
        \mathbb R[t]_{\le m'-r_i-1}.
    \]

    \item \emph{Surjectivity in the $\rho_i$-summand.}
    Lemma~\ref{lem:univariate-generation} gives
    \[
    \begin{aligned}
    &\dim\!\left(
    \mathbb R[s]_{\le m}
    /
    \sum_{\gamma_j\in\Gamma_{\iota}(\rho_i)}
    (s-s_j)^{r_j+1}
    \mathbb R[s]_{\le m-r_j-1}
    \right)
    \\
    &\qquad=
    \left(
    m+1-
    \sum_{\gamma_j\in\Gamma_{\iota}(\rho_i)}
    (m-r_j)_+
    \right)_+
    =
    \bigl(m+1-\omega_{\iota}(\rho_i)\bigr)_+.
    \end{aligned}
    \]
    Since $\omega_{\iota}(\rho_i)\ge m+1$, this quotient is zero.  Tensoring
    with the transverse factor shows that the projection
    \[
        \pi_i:\mathcal R_i\longrightarrow
        [\rho_i]\mathcal B_{\rho_i}
    \]
    is surjective.

    \item \emph{Identification of the correction spaces.}
    The inclusion $\mathcal W_i\hookrightarrow\mathcal V_i$ induces a linear
    map
    \[
        \Phi_i:
        \mathcal W_i/
        \bigl(\mathcal W_i\cap\mathcal R(\mathcal T)\bigr)
        \longrightarrow
        \mathcal V_i/\mathcal R_i.
    \]
    Surjectivity of $\pi_i$ implies that every class in the target has a
    representative in $\mathcal W_i$, so $\Phi_i$ is surjective.  Its kernel is
    \[
        \mathcal W_i\cap\mathcal R_i
        =\mathcal W_i\cap\mathcal R(\mathcal T),
    \]
    which is exactly the denominator on the left.  Therefore $\Phi_i$ is an
    isomorphism.  By the definition of the correction spaces,
    \[
        \mathcal H_{m,m'}^{\mathbf r}(\mathcal A_i;\mathcal T)
        \cong
        \mathcal H_{m,m'}^{\mathbf r}
        (\mathcal A_{i+1};\mathcal T).
    \]
    This is exactly the isomorphism asserted in the theorem.
\end{enumerate}

\end{proof}

Repeated use of Theorem~\ref{thm:homological-deletion} gives the following consequence for a fixed T-mesh.

\begin{explanatorycorollary}[The Euler characteristic is unchanged during deletion]
Let
\[
\operatorname{MIS}(\mathcal T)=\mathcal A^{(0)}\supseteq\mathcal A^{(1)}\supseteq\cdots\supseteq\mathcal A^{(N)}
\]
be a sequence of MIS sets obtained by successive applications of Theorem~\ref{thm:homological-deletion}.  Then, for every $k=0,\ldots,N$,
\[
\dim \mathcal S_{m,m'}^{\mathbf r}(\mathcal T)
=
\chi\!\left(\mathcal Q_{m,m'}^{\mathbf r}(\mathcal T^\circ)\right)
+
\dim \mathcal H_{m,m'}^{\mathbf r}(\mathcal A^{(k)};\mathcal T).
\]
The Euler characteristic term is always the term associated with the original T-mesh $\mathcal T$; it does not vary with $k$.
\end{explanatorycorollary}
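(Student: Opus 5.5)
The statement is the unlabelled corollary asserting that the dimension formula holds with $\mathcal H_{m,m'}^{\mathbf r}(\mathcal A^{(k)};\mathcal T)$ in place of the correction homology, for every stage of the deletion. I would prove it by induction on $k$, chaining the isomorphisms of Theorem~\ref{thm:homological-deletion} and starting from Theorem~\ref{thm:homological-dimension-formula}. For $k=0$ we have $\mathcal A^{(0)}=\operatorname{MIS}(\mathcal T)$. In this case $\mathcal R_{\mathcal A^{(0)}}(\mathcal T)=\mathcal M(\operatorname{MIS}(\mathcal T))\cap\mathcal R(\mathcal T)=\mathcal R(\mathcal T)$, since $\mathcal R(\mathcal T)$ is a subspace of the full direct sum. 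Hence $\mathcal H_{m,m'}^{\mathbf r}(\operatorname{MIS}(\mathcal T);\mathcal T)$ is exactly the quotient in Proposition~\ref{prop:mis-representation}, which is isomorphic to $H_0(\mathcal I_{m,m'}^{\mathbf r}(\mathcal T^\circ))$. Substituting this into Theorem~\ref{thm:homological-dimension-formula} gives the base case.

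For the inductive step, I would first make precise what \emph{obtained by successive applications} means. Each step $\mathcal A^{(k)}\to\mathcal A^{(k+1)}$ removes a single MIS $\rho$. There is a total order $\iota$ of the full set $\operatorname{MIS}(\mathcal T)$ and an index $i$ such that $\mathcal A^{(k)}=\mathcal A_i$, $\rho=\rho_i$, $\mathcal A^{(k+1)}=\mathcal A_{i+1}$, and $\omega_\iota(\rho_i)\ge\theta(\rho_i)$. I would construct $\iota$ by listing the already-deleted MISs first, in deletion order, then $\rho$, then the rest of $\mathcal A^{(k+1)}$ in any order. This is the step to check carefully. The relative vertex set $\Gamma_\iota(\rho_i)$ depends only on which MISs precede $\rho_i$, namely the complement of $\mathcal A_i$. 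So the weight condition is intrinsic to the pair $(\mathcal A^{(k)},\rho)$ and does not depend on how the later MISs are arranged. Theorem~\ref{thm:homological-deletion} then gives $\mathcal H_{m,m'}^{\mathbf r}(\mathcal A^{(k+1)};\mathcal T)\cong\mathcal H_{m,m'}^{\mathbf r}(\mathcal A^{(k)};\mathcal T)$, and so the dimensions agree.

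Finally, the Euler characteristic term needs no argument beyond observing that it is unchanged. Every $\mathcal H(\mathcal A^{(k)};\mathcal T)$ is built from the same $\mathcal T$, $\mathbf r$, and relation space $\mathcal R(\mathcal T)$, and the complex $\mathcal Q_{m,m'}^{\mathbf r}(\mathcal T^\circ)$ is never modified, as the remark on retaining subsets notes. Thus $\chi$ is literally the same number at every stage. Combining this with the chain of dimension equalities gives the formula for all $k$. The only potential obstacle is the bookkeeping in the inductive step: the hypothesis of Theorem~\ref{thm:homological-deletion} is stated relative to a full total order, and the argument must show that such an order always exists for each deletion step.
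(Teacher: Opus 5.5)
Your proposal is correct and follows essentially the same route as the paper. The base case $k=0$ comes from Theorem~\ref{thm:homological-dimension-formula} together with Proposition~\ref{prop:mis-representation}, using $\mathcal M(\operatorname{MIS}(\mathcal T))\cap\mathcal R(\mathcal T)=\mathcal R(\mathcal T)$. Each later step is the isomorphism of Theorem~\ref{thm:homological-deletion}, and $\chi$ is constant because $\mathcal Q_{m,m'}^{\mathbf r}(\mathcal T^\circ)$ is never modified.

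Your bookkeeping about the total order is correct, since $\Gamma_\iota(\rho_i)$ depends only on the set of MISs preceding $\rho_i$. It is not strictly needed, however: the hypothesis that each step is an application of Theorem~\ref{thm:homological-deletion} already supplies a witnessing order.
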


\begin{proof}
For $k=0$, the statement follows from Theorem~\ref{thm:homological-dimension-formula} and Proposition~\ref{prop:mis-representation}.  Each subsequent deletion gives an isomorphism between two correction spaces by Theorem~\ref{thm:homological-deletion}.  Thus only the quotient representation of the correction term changes.  The quotient complex $\mathcal Q_{m,m'}^{\mathbf r}(\mathcal T^\circ)$ is fixed, and the displayed formula follows for every $k$.
\end{proof}

We next compare the correction terms obtained after deletion on two pairs in the same structural class.

\begin{corollary}[Dimension comparison after deletion]
\label{cor:dimension-comparison-after-deletion}
Let
$(\mathcal T_1,\mathbf r_1),(\mathcal T_2,\mathbf r_2)
\in[\mathcal T,\mathbf r]_{\mathrm I}^{+}$.  Choose a direction-preserving structural isomorphism that preserves the
smoothness orders and choose
corresponding total orders of the two MIS sets.  Suppose that
Theorem~\ref{thm:homological-deletion} is applied successively along each order,
leaving final remaining sets $\mathcal A_1$ and $\mathcal A_2$.  Then
\[
\begin{aligned}
&\dim \mathcal S_{m,m'}^{\mathbf r_1}(\mathcal T_1)
-
\dim \mathcal S_{m,m'}^{\mathbf r_2}(\mathcal T_2)
\\
={}&
\dim \mathcal H_{m,m'}^{\mathbf r_1}(\mathcal A_1;\mathcal T_1)
-
\dim \mathcal H_{m,m'}^{\mathbf r_2}(\mathcal A_2;\mathcal T_2).
\end{aligned}
\]
In particular, the reference pair $(\mathcal T,\mathbf r)$ is dimensionally stable if and only if the dimension of the remaining correction space is constant on $[\mathcal T,\mathbf r]_{\mathrm I}^{+}$.  Thus such a deletion sequence preserves the spline dimension for each fixed realization and does not change the question of dimensional stability within the structural class.
\end{corollary}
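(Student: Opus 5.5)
The plan is to combine the unnumbered corollary just above (the Euler characteristic is unchanged during deletion) with Lemma~\ref{lem:euler-characteristic-structural-invariance}. The corollary is applied once to each realization separately, and the lemma is then used to cancel the two Euler characteristic terms.

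First, fix $j\in\{1,2\}$. The successive applications of Theorem~\ref{thm:homological-deletion} along the chosen total order of $\operatorname{MIS}(\mathcal T_j)$ give a chain
\[
\operatorname{MIS}(\mathcal T_j)=\mathcal A^{(0)}_j\supseteq\cdots\supseteq\mathcal A^{(N_j)}_j=\mathcal A_j .
\]
At each step the isomorphism of Theorem~\ref{thm:homological-deletion} preserves the dimension of the correction space. Together with Theorem~\ref{thm:homological-dimension-formula} and Proposition~\ref{prop:mis-representation}, the corollary then yields
\[
\dim \mathcal S_{m,m'}^{\mathbf r_j}(\mathcal T_j)
=\chi\!\left(\mathcal Q_{m,m'}^{\mathbf r_j}(\mathcal T_j^\circ)\right)
+\dim \mathcal H_{m,m'}^{\mathbf r_j}(\mathcal A_j;\mathcal T_j).
\]
This step works on each realization independently. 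The correspondence of total orders is not needed for the identity itself; it only matters later, when one wants to compare $\mathcal A_1$ and $\mathcal A_2$.

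Second, subtract the two identities. Both pairs lie in $[\mathcal T,\mathbf r]_{\mathrm I}^{+}$, so Lemma~\ref{lem:euler-characteristic-structural-invariance} says the two Euler characteristic terms coincide, and they cancel. This leaves the displayed difference formula.

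For the ``in particular'' clause, I will use Definition~\ref{def:dimensional-stability}: stability means that $\dim\mathcal S$ is constant on the class. I apply the difference formula with $(\mathcal T_2,\mathbf r_2)=(\mathcal T,\mathbf r)$, the reference pair itself, and let $(\mathcal T_1,\mathbf r_1)$ range over the class. Then constancy of the spline dimension is equivalent to constancy of $\dim\mathcal H(\mathcal A_1;\mathcal T_1)$.

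The main obstacle is that the phrase ``the dimension of the remaining correction space'' must be well defined for each pair, independently of which deletion sequence was used. This is settled by the first step. For every admissible deletion sequence, $\dim\mathcal H(\mathcal A_j;\mathcal T_j)$ equals $\dim H_0(\mathcal I^{\mathbf r_j}_{m,m'}(\mathcal T_j^\circ))$. So the quantity depends only on the pair $(\mathcal T_j,\mathbf r_j)$, and no order-independence of the remaining set is needed for this corollary. The final sentence of the statement is then immediate: each deletion preserves the spline dimension of its own realization, so the stability question is unchanged.
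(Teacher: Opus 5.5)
Your proposal is correct and follows essentially the same route as the paper: the paper also cancels the Euler characteristic terms via Lemma~\ref{lem:euler-characteristic-structural-invariance}, and it uses successive applications of Theorem~\ref{thm:homological-deletion} to identify $\dim H_0\bigl(\mathcal I_{m,m'}^{\mathbf r_k}(\mathcal T_k^\circ)\bigr)$ with $\dim\mathcal H_{m,m'}^{\mathbf r_k}(\mathcal A_k;\mathcal T_k)$ on each realization; it only performs these two steps in the opposite order. Your observations that the correspondence of total orders plays no role in the identity, and that the remaining dimension is intrinsic because it equals the correction homology, are consistent with the paper's argument.
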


\begin{proof}
By Lemma~\ref{lem:euler-characteristic-structural-invariance}, the two Euler
characteristics are equal.  The homological dimension formula therefore reduces
the difference of spline dimensions to the difference of the two correction
homology dimensions.  Applying Theorem~\ref{thm:homological-deletion}
successively along the chosen orders gives
\[
\dim H_0\!\left(
\mathcal I_{m,m'}^{\mathbf r_k}(\mathcal T_k^\circ)
\right)
=
\dim \mathcal H_{m,m'}^{\mathbf r_k}(\mathcal A_k;\mathcal T_k),
\qquad k=1,2.
\]
Substitution proves the equality and the final equivalence.
\end{proof}

\subsection{Iterative deletion and the weighted CNDC}
\label{subsec:absolute-weight-cndc}

The relative weight in Theorem~\ref{thm:homological-deletion} depends on a total order.  We now give an iterative test that uses only the MISs remaining at the current stage, so no total order is needed in advance.  Reading the deletion history in reverse later gives an order to which Theorem~\ref{thm:homological-deletion} applies.  Throughout the algorithm, the T-mesh is fixed; only the sets of retained and removed MIS summands are updated.

At each stage, let $\mathcal A$ be the set of MISs still under consideration
and let $\mathcal B$ be the set already removed.  We write the stage as
\[
(\mathcal A,\mathcal B)
\]
where
\[
\mathcal A\cap\mathcal B=\varnothing,
\qquad
\mathcal A\cup\mathcal B=\operatorname{MIS}(\mathcal T).
\]
The first entry always denotes the remaining set and the second the removed set.

At a fixed deletion stage, a vertex gives a one-term relation for an MIS exactly when it lies on no other remaining MIS.  We first record these vertices.

\begin{definition}[Current mono-vertices]
\label{def:relative-mono-partition}
Let $\rho\in\mathcal A$.  The current mono-vertices of $\rho$ at the stage
$(\mathcal A,\mathcal B)$ are the vertices on $\rho$ that do not lie on any MIS
in $\mathcal A\setminus\{\rho\}$.  Their set is denoted by
\[
\operatorname{Mono}_{(\mathcal A,\mathcal B)}(\rho).
\]
Equivalently, these vertices are mono-vertices when only the MISs in
$\mathcal A$ are considered.  Thus the set consists of the
original mono-vertices of $\rho$ together with the intersection vertices that
become mono-vertices after the MISs in $\mathcal B$ have been removed.
\end{definition}

Suppose that $\rho$ is removed when the current stage is
$(\mathcal A,\mathcal B)$.  Reverse the deletion history to form a total order of
all MISs, as described below.  Then every vertex in
$\operatorname{Mono}_{(\mathcal A,\mathcal B)}(\rho)$ belongs to
$\Gamma_\iota(\rho)$.  If several MISs are removed in the same pass, they may be
ordered arbitrarily within that pass.  This choice may add intersections to
$\Gamma_\iota(\rho)$, but it cannot remove any current mono-vertex.

The following weight adds the degree contributions of the current mono-vertices.  It gives a sufficient deletion test without requiring a total order in advance.

\begin{definition}[Absolute weight]
\label{def:absolute-weight}
Let $\rho\in\mathcal A$.  Its absolute weight at the state
$(\mathcal A,\mathcal B)$ is
\[
\Omega_{(\mathcal A,\mathcal B)}(\rho)
=
\begin{cases}
\displaystyle
\sum_{\gamma\in\operatorname{Mono}_{(\mathcal A,\mathcal B)}(\rho)}
\mu_h(\gamma),
&
\rho\in\operatorname{MIS}_h(\mathcal T),
\\[4mm]
\displaystyle
\sum_{\gamma\in\operatorname{Mono}_{(\mathcal A,\mathcal B)}(\rho)}
\mu_v(\gamma),
&
\rho\in\operatorname{MIS}_v(\mathcal T).
\end{cases}
\]
The threshold is
\[
\theta(\rho)
=
\begin{cases}
m+1,&\rho\in\operatorname{MIS}_h(\mathcal T),\\
m'+1,&\rho\in\operatorname{MIS}_v(\mathcal T).
\end{cases}
\]
\end{definition}

Suppose the deletion algorithm removes MISs in successive passes.  Reverse the
order of the passes, place the final weighted CNDC last, and choose any order
within each pass.  This gives a total order of all MISs.  For every MIS $\rho$
removed at the stage $(\mathcal A,\mathcal B)$, one has
\[
\operatorname{Mono}_{(\mathcal A,\mathcal B)}(\rho)
\subseteq\Gamma_\iota(\rho),
\qquad
\Omega_{(\mathcal A,\mathcal B)}(\rho)
\le\omega_\iota(\rho).
\]
Thus the absolute weight is a sufficient test: if it reaches the threshold
during the deletion algorithm, then the relative weight reaches the same threshold
in the reversed total order used in Theorem~\ref{thm:homological-deletion}.

Let us illustrate this definition using Fig.~\ref{fig:basic-tmesh}.  Denote
\[
    \rho_h=[v_1,v_2],
    \qquad
    \rho_v=[v_3,v_2]\cup[v_2,v_4].
\]
For the initial stage
\[
    \bigl(\{\rho_h,\rho_v\},\varnothing\bigr),
\]
the vertex $v_2$ is still shared by the two MISs and is not counted as a current
mono-vertex of either MIS.  Hence
\[
    \operatorname{Mono}_{(\{\rho_h,\rho_v\},\varnothing)}(\rho_h)
    =
    \{v_1\},
\]
and
\[
    \operatorname{Mono}_{(\{\rho_h,\rho_v\},\varnothing)}(\rho_v)
    =
    \{v_3,v_4\}.
\]
Therefore
\[
    \Omega_{(\{\rho_h,\rho_v\},\varnothing)}(\rho_h)
    =
    \mu_h(v_1),
\]
and
\[
    \Omega_{(\{\rho_h,\rho_v\},\varnothing)}(\rho_v)
    =
    \mu_v(v_3)+\mu_v(v_4).
\]
After $\rho_v$ has been removed, the current deletion state is
\[
    \bigl(\{\rho_h\},\{\rho_v\}\bigr).
\]
Now $v_2$ becomes a current mono-vertex of $\rho_h$, and hence
\[
    \operatorname{Mono}_{(\{\rho_h\},\{\rho_v\})}(\rho_h)
    =
    \{v_1,v_2\},
\]
so that
\[
    \Omega_{(\{\rho_h\},\{\rho_v\})}(\rho_h)
    =
    \mu_h(v_1)+\mu_h(v_2).
\]
This calculation shows how the absolute weight changes when a shared vertex
becomes a current mono-vertex.

The next proposition shows that corresponding stages on structurally isomorphic
T-meshes have the same absolute weights, provided that the ordered bi-degree is
fixed and corresponding edges have the same smoothness orders.

\begin{proposition}[Invariance of absolute weights]
\label{prop:absolute-weight-structural-invariance}
Let
\[
\alpha:
\widehat{\operatorname{MS}}(\mathcal T)
\longrightarrow
\widehat{\operatorname{MS}}(\mathcal T')
\]
be a direction-preserving structural isomorphism such that corresponding
edges have the same smoothness orders under $\mathbf r$ and $\mathbf r'$.  Then, for any deletion state
$(\mathcal A,\mathcal B)$ of $\operatorname{MIS}(\mathcal T)$ and any
$\rho\in\mathcal A$,
\[
\alpha\!\left(
\operatorname{Mono}_{(\mathcal A,\mathcal B)}(\rho)
\right)
=
\operatorname{Mono}_{(\alpha(\mathcal A),\alpha(\mathcal B))}
\bigl(\alpha(\rho)\bigr),
\]
and
\[
\Omega_{(\mathcal A,\mathcal B)}(\rho)
=
\Omega_{(\alpha(\mathcal A),\alpha(\mathcal B))}
\bigl(\alpha(\rho)\bigr),
\]
where the left- and right-hand weights are computed with $\mathbf r$ and
$\mathbf r'$, respectively.
\end{proposition}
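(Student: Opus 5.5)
The plan is to prove the set identity first and then deduce the weight identity summand by summand. Throughout, $\alpha$ also denotes the induced vertex map $\alpha_0$ of Proposition~\ref{prop:induced-tmesh-isomorphism}. Condition~(i) of Definition~\ref{def:structural-isomorphism} gives $\alpha(\operatorname{MIS}(\mathcal T))=\operatorname{MIS}(\mathcal T')$. Hence $(\alpha(\mathcal A),\alpha(\mathcal B))$ is again a deletion state: the two sets are disjoint and their union is $\operatorname{MIS}(\mathcal T')$.

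For the set identity, take a vertex $\gamma$ on $\rho$. By the reduced-mesh assumptions, $\gamma=\rho\cap\eta$ for the unique perpendicular member $\eta\in\widehat{\operatorname{MS}}(\mathcal T)$ through $\gamma$. Then $\alpha(\gamma)=\alpha(\rho)\cap\alpha(\eta)$ lies on $\alpha(\rho)$. Conversely, every vertex of $\alpha(\rho)$ arises this way, because $\alpha$ is a bijection that preserves incidence. So $\alpha$ restricts to a bijection between the vertices of $\rho$ and those of $\alpha(\rho)$.

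Next, $\gamma$ lies on some MIS in $\mathcal A\setminus\{\rho\}$ exactly when the other member $\eta$ through $\gamma$ belongs to $\mathcal A$. The only members of $\widehat{\operatorname{MS}}(\mathcal T)$ through $\gamma$ are $\rho$ and $\eta$, and $\eta\neq\rho$. The condition $\eta\in\mathcal A$ is equivalent to $\alpha(\eta)\in\alpha(\mathcal A)$, since $\alpha$ is a bijection preserving MIS type. It remains to identify $\alpha(\eta)$ as the other member through $\alpha(\gamma)$, which holds because $\alpha(\gamma)=\alpha(\rho)\cap\alpha(\eta)$. Therefore $\gamma\in\operatorname{Mono}_{(\mathcal A,\mathcal B)}(\rho)$ if and only if $\alpha(\gamma)\in\operatorname{Mono}_{(\alpha(\mathcal A),\alpha(\mathcal B))}(\alpha(\rho))$, which is the first identity.

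For the weights, use that $\alpha$ is direction-preserving. Then $\alpha(\rho)$ has the same direction as $\rho$, so both sides use the same case of Definition~\ref{def:absolute-weight}, the same $\mu_h$ or $\mu_v$, and the same threshold. Also $\rho_v(\alpha(\gamma))=\alpha(\rho_v(\gamma))$, and likewise for $\rho_h$. The smoothness hypothesis says that corresponding interior edges have equal orders, and every interior maximal segment contains an interior edge whose image under $\alpha_1$ lies on the image segment. Hence $r'(\alpha(\rho_v(\gamma)))=r(\rho_v(\gamma))$, so $r_h'(\alpha(\gamma))=r_h(\gamma)$, and similarly $r_v'(\alpha(\gamma))=r_v(\gamma)$. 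Since $m,m'$ are fixed, this gives $\mu_h'(\alpha(\gamma))=\mu_h(\gamma)$ and $\mu_v'(\alpha(\gamma))=\mu_v(\gamma)$. Summing over the bijection of current mono-vertices proves the weight identity.

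The main obstacle is the bookkeeping in the first step: making sure the perpendicular member through a vertex is unique and is carried to the perpendicular member through the image vertex. I would rely on the reduced-mesh assumption that every vertex lies on exactly one horizontal and one vertical member of $\widehat{\operatorname{MS}}$, together with the definition of $\alpha_0$. There is one subtle point. The perpendicular member $\eta$ may be a ray, cross-cut, or boundary segment. In that case it is not in $\mathcal A$, and its image is also not an MIS by condition~(i), so it is not in $\alpha(\mathcal A)$ either. The equivalence therefore still holds.
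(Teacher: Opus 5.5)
Your proof is correct and follows the same route as the paper. Like the paper, you first use the vertex bijection from Proposition~\ref{prop:induced-tmesh-isomorphism} and the preservation of MIS type to match the current mono-vertex sets, then use direction preservation and equal smoothness orders to match $\mu_h$ and $\mu_v$ at corresponding vertices and sum. You also make explicit what the paper leaves implicit (the unique perpendicular member through each vertex and its image, and the transfer of edge smoothness orders to maximal segments); the boundary-segment case you guard against cannot occur, since every vertex of an MIS is interior, but treating it does no harm.
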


\begin{proof}
Proposition~\ref{prop:induced-tmesh-isomorphism} gives a bijection of interior
vertices and Condition~\textup{(i)} of
Definition~\ref{def:structural-isomorphism} gives
$\alpha(\operatorname{MIS}(\mathcal T))
=\operatorname{MIS}(\mathcal T')$.  Hence a vertex $\gamma$ on $\rho$ lies on
an MIS in $\mathcal A\setminus\{\rho\}$ if and only if $\alpha(\gamma)$ lies on
an MIS in
$\alpha(\mathcal A)\setminus\{\alpha(\rho)\}$.  This proves the equality of the
current mono-vertex sets.

Corresponding maximal segments have the same smoothness orders.  Because the map is direction-preserving, the
values of $\mu_h$ and $\mu_v$ agree separately at corresponding vertices.
Summing the same nonnegative vertex multiplicities over the corresponding sets
proves the equality of the absolute weights.
\end{proof}

We now name the MISs that pass the deletion test.  In this terminology,
``diagonalizable'' means that the available relations generate the coefficient
space of the MIS, so its summand can be removed from the quotient.

\begin{definition}[Diagonalizable MIS]
\label{def:diagonalizable-mis}
Let $(\mathcal A,\mathcal B)$ be a deletion state and let
$\rho\in\mathcal A$.  We say that $\rho$ is diagonalizable at
$(\mathcal A,\mathcal B)$ if
\[
\Omega_{(\mathcal A,\mathcal B)}(\rho)\ge\theta(\rho).
\]
\end{definition}

If an MIS is diagonalizable at some stage, its absolute weight reaches the
threshold.  The total order obtained by reversing the deletion history then gives a relative
weight satisfying
the hypothesis of Theorem~\ref{thm:homological-deletion}.  Hence all MISs removed
by the algorithm can be deleted, in reverse order, without changing the correction
space.  The MISs that remain form the weighted CNDC used below in the dimension
and stability analysis.

In Algorithm~\ref{alg:weighted-cndc}, $\mathcal A$ is the current set of retained MIS summands and $\mathcal B$ is the set already removed from the quotient representation.  The underlying T-mesh is not altered, so the Euler characteristic term in the spline dimension formula is unchanged.

\begin{algorithm}[htb]
\caption{Compute the weighted CNDC.}
\label{alg:weighted-cndc}
\begin{algorithmic}[1]
\REQUIRE The set $\operatorname{MIS}(\mathcal T)$, the bi-degree
$(m,m')$, and a smoothness distribution $\mathbf r$ that is constant along each maximal segment.
\ENSURE The final remaining set $\mathcal A$ and the removed set $\mathcal B$.
\STATE Set $\mathcal A=\operatorname{MIS}(\mathcal T)$ and
$\mathcal B=\varnothing$.
\REPEAT
    \STATE Set $\mathcal Z=\varnothing$.
    \FOR{$\rho\in\mathcal A$}
        \STATE Compute
        $\operatorname{Mono}_{(\mathcal A,\mathcal B)}(\rho)$ and
        $\Omega_{(\mathcal A,\mathcal B)}(\rho)$.
        \IF{$\Omega_{(\mathcal A,\mathcal B)}(\rho)\ge\theta(\rho)$}
            \STATE Set $\mathcal Z=\mathcal Z\cup\{\rho\}$.
        \ENDIF
    \ENDFOR
    \STATE Set $\mathcal A=\mathcal A\setminus\mathcal Z$.
    \STATE Set $\mathcal B=\mathcal B\cup\mathcal Z$.
\UNTIL{$\mathcal Z=\varnothing$}
\STATE Output $(\mathcal A,\mathcal B)$.
\end{algorithmic}
\end{algorithm}

At termination, no MIS in the remaining set satisfies the deletion test.  We now name the two final sets.

\begin{definition}[Weighted CNDC and diagonalizable part]
\label{def:weighted-cndc}
The final remaining set $\mathcal A$ produced by
Algorithm~\ref{alg:weighted-cndc} is called the weighted completely
non-diagonalizable set, or weighted CNDC, and is denoted by
\[
\operatorname{CNDC}_{m,m'}^{\mathbf r}(\mathcal T).
\]
The set $\mathcal B$ of removed MISs is called the diagonalizable part and is
denoted by
\[
\operatorname{Diag}_{m,m'}^{\mathbf r}(\mathcal T).
\]
The weighted CNDC need not be connected; it may be a union of several
T-connected components.  It is the set left by the stated deletion criterion:
no remaining MIS satisfies that criterion.  We do not claim that it is minimal
among all possible quotient representations of the correction homology.
\end{definition}

\begin{remark}[Highest-order smoothness]
\label{rem:highest-order-reduction}
If the smoothness is of highest order in the normal direction, namely
$r(\rho)=m'-1$ on horizontal maximal segments and $r(\rho)=m-1$ on vertical
maximal segments, then $\mu_h(\gamma)=\mu_v(\gamma)=1$ at every relevant
vertex.  The absolute weight is therefore the number of current mono-vertices,
and Algorithm~\ref{alg:weighted-cndc} reduces to the deletion rule based on
relative mono-vertices used in \cite{HuangChen2024}.
\end{remark}

We give a concrete calculation on the T-mesh in Fig.~\ref{fig:basic-tmesh}.  Let
\[
    (m,m')=(3,3),
\]
and suppose that the horizontal maximal segments through
$v_3$ and $v_4$ have smoothness order $1$, while the vertical maximal segments
through $v_1$ and $v_2$ have smoothness order $2$.  Equivalently,
\[
    r_v(v_3)=r_v(v_4)=1,
    \qquad
    r_h(v_1)=r_h(v_2)=2.
\]
Then
\[
    \mu_v(v_3)=\mu_v(v_4)=3-1=2,
    \qquad
    \mu_h(v_1)=\mu_h(v_2)=3-2=1.
\]
At the initial stage
\[
    \mathcal A=\{\rho_h,\rho_v\},
    \qquad
    \mathcal B=\varnothing,
\]
we have
\[
    \Omega_{(\mathcal A,\mathcal B)}(\rho_v)
    =
    \mu_v(v_3)+\mu_v(v_4)
    =
    4.
\]
Since $\rho_v$ is vertical, its threshold is
\[
    \theta(\rho_v)=m'+1=4.
\]
Thus $\rho_v$ is diagonalizable and is removed.  After this
deletion, the current stage is
\[
    \mathcal A=\{\rho_h\},
    \qquad
    \mathcal B=\{\rho_v\}.
\]
Now the current mono-vertices of $\rho_h$ are $v_1$ and $v_2$, and hence
\[
    \Omega_{(\mathcal A,\mathcal B)}(\rho_h)
    =
    \mu_h(v_1)+\mu_h(v_2)
    =
    2.
\]
Since $\rho_h$ is horizontal, its threshold is
\[
    \theta(\rho_h)=m+1=4.
\]
Therefore $\rho_h$ is not diagonalizable.  The algorithm stops with
\[
    \operatorname{CNDC}_{3,3}^{\mathbf r}(\mathcal T)=\{\rho_h\},
    \qquad
    \operatorname{Diag}_{3,3}^{\mathbf r}(\mathcal T)=\{\rho_v\}.
\]
This example shows how a non-uniform smoothness
distribution can make one MIS diagonalizable while leaving another MIS in
the weighted CNDC.

\begin{theorem}[Stopping condition for the weighted CNDC]
\label{thm:cndc-characterization}
Let
\[
\left(
\operatorname{CNDC}_{m,m'}^{\mathbf r}(\mathcal T),
\operatorname{Diag}_{m,m'}^{\mathbf r}(\mathcal T)
\right)
\]
be the final deletion state produced by
Algorithm~\ref{alg:weighted-cndc}.  Then
\[
\Omega_{\left(
\operatorname{CNDC}_{m,m'}^{\mathbf r}(\mathcal T),
\operatorname{Diag}_{m,m'}^{\mathbf r}(\mathcal T)
\right)}(\rho)
<
\theta(\rho)
\]
for every
$\rho\in\operatorname{CNDC}_{m,m'}^{\mathbf r}(\mathcal T)$.
Conversely, the algorithm stops precisely when this strict inequality holds for
every remaining MIS.
\end{theorem}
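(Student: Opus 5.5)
The plan is to read the statement directly off the control flow of Algorithm~\ref{alg:weighted-cndc}, after first checking that the algorithm terminates, so that a final state exists at all.

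\emph{Termination.} In each pass of the \texttt{repeat} loop, either $\mathcal Z=\varnothing$ and the loop stops, or $\mathcal Z\neq\varnothing$ and $|\mathcal A|$ strictly decreases. Since $|\mathcal A|\le|\operatorname{MIS}(\mathcal T)|<\infty$, there are at most $|\operatorname{MIS}(\mathcal T)|+1$ passes. Hence the output $(\mathcal A,\mathcal B)=\bigl(\operatorname{CNDC}_{m,m'}^{\mathbf r}(\mathcal T),\operatorname{Diag}_{m,m'}^{\mathbf r}(\mathcal T)\bigr)$ is well defined.

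\emph{Forward inequality.} Consider the last pass, the one with $\mathcal Z=\varnothing$. In that pass the \texttt{for} loop runs over every $\rho$ in the current $\mathcal A$ and evaluates $\Omega_{(\mathcal A,\mathcal B)}(\rho)$ at the current state. Because $\mathcal Z=\varnothing$ at the end of the pass, no $\rho$ triggered the test $\Omega\ge\theta(\rho)$, so $\Omega_{(\mathcal A,\mathcal B)}(\rho)<\theta(\rho)$ for every $\rho\in\mathcal A$.

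The one point to verify is that the state at which these weights are evaluated is exactly the output state. Within a pass, $\mathcal A$ and $\mathcal B$ are updated only after the \texttt{for} loop, and with $\mathcal Z=\varnothing$ the update leaves them unchanged. So the weights of the last pass are computed at $\bigl(\operatorname{CNDC},\operatorname{Diag}\bigr)$. Definition~\ref{def:current-mono-vertices}'s set $\operatorname{Mono}_{(\mathcal A,\mathcal B)}(\rho)$, and with it the absolute weight, depends only on this state and not on the history, so these are the weights in the statement.

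\emph{Converse.} Suppose that at the start of some pass, at state $(\mathcal A,\mathcal B)$, every $\rho\in\mathcal A$ satisfies $\Omega_{(\mathcal A,\mathcal B)}(\rho)<\theta(\rho)$. Then no $\rho$ is added to $\mathcal Z$, so $\mathcal Z=\varnothing$ and the \texttt{until} condition stops the algorithm.

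Conversely, if some $\rho\in\mathcal A$ has $\Omega_{(\mathcal A,\mathcal B)}(\rho)\ge\theta(\rho)$, then $\rho\in\mathcal Z$, so $\mathcal Z\neq\varnothing$ and the loop continues. Hence the algorithm stops at a state precisely when the strict inequality holds for every remaining MIS.

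The same argument covers the degenerate case $\mathcal A=\varnothing$: the condition is then vacuous, and the algorithm stops with $\mathcal Z=\varnothing$.

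No deep step is involved; the only subtlety is the well-definedness noted above. All tests in a pass use the frozen state from the start of that pass, so simultaneous removals in a pass do not affect which MISs are tested, and the final stopping test is performed at the output state.
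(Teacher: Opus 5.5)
Your proposal is correct and matches the paper's proof. Both argue termination from the fact that each nonempty pass shrinks the finite set $\mathcal A$, read the strict inequality off $\mathcal Z=\varnothing$ in the final pass, and get the converse because a state with all strict inequalities yields $\mathcal Z=\varnothing$. You also make explicit that the last-pass weights are evaluated at the unchanged output state and that a single eligible MIS forces $\mathcal Z\neq\varnothing$; the paper leaves these implicit, but the argument is the same.
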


\begin{proof}
At each pass of Algorithm~\ref{alg:weighted-cndc}, every MIS satisfying
\[
    \Omega_{(\mathcal A,\mathcal B)}(\rho)\ge\theta(\rho)
\]
is placed in $\mathcal Z$.  Whenever $\mathcal Z\neq\varnothing$, at least
one MIS is removed from the finite set $\mathcal A$.  Hence only finitely many
nontrivial passes are possible, and the algorithm terminates.

At the final deletion stage, the stopping condition is
$\mathcal Z=\varnothing$.  Thus no remaining $\rho\in\mathcal A$ satisfies
the deletion inequality, and therefore
\[
    \Omega_{(\mathcal A,\mathcal B)}(\rho)<\theta(\rho)
\]
for every remaining MIS.  Substituting
\[
    \mathcal A=\operatorname{CNDC}_{m,m'}^{\mathbf r}(\mathcal T),
    \qquad
    \mathcal B=\operatorname{Diag}_{m,m'}^{\mathbf r}(\mathcal T)
\]
gives the asserted inequality.  Conversely, if the strict inequality holds
for every $\rho\in\mathcal A$ at some stage, then no remaining MIS is
diagonalizable, so the next pass has
$\mathcal Z=\varnothing$ and the algorithm stops.
\end{proof}

\subsection{Uniqueness, relation rank, stability, and bounds}
\label{subsec:cndc-properties-bounds}

The weighted CNDC is obtained by repeatedly removing MISs whose absolute weights
reach the threshold.  We now prove that the final set is independent of the order
in which eligible MISs are removed and is preserved by direction-preserving
structural isomorphisms.  Thus the weighted CNDC is well defined for a fixed
ordered bi-degree and fixed smoothness orders.

Once some MISs have been removed, additional vertices may become current
mono-vertices of the remaining MISs.  Hence their absolute weights can only
increase.  This monotonicity is the reason why the final set is unique.

\begin{lemma}[Monotonicity of absolute weights]
\label{lem:absolute-weight-monotonicity}
Let
\[
    (\mathcal A_1,\mathcal B_1),
    \qquad
    (\mathcal A_2,\mathcal B_2)
\]
be two states of the deletion process such that
\[
    \mathcal A_2\subseteq \mathcal A_1.
\]
Then, for every $\rho\in\mathcal A_2$,
\[
    \operatorname{Mono}_{(\mathcal A_1,\mathcal B_1)}(\rho)
    \subseteq
    \operatorname{Mono}_{(\mathcal A_2,\mathcal B_2)}(\rho).
\]
Consequently,
\[
    \Omega_{(\mathcal A_1,\mathcal B_1)}(\rho)
    \le
    \Omega_{(\mathcal A_2,\mathcal B_2)}(\rho).
\]
\end{lemma}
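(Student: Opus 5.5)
The plan is to unwind Definition~\ref{def:relative-mono-partition} and compare the two defining conditions directly. Fix $\rho\in\mathcal A_2$; since $\mathcal A_2\subseteq\mathcal A_1$, also $\rho\in\mathcal A_1$, so both current mono-vertex sets of $\rho$ are defined. Let $\gamma\in\operatorname{Mono}_{(\mathcal A_1,\mathcal B_1)}(\rho)$. By definition, $\gamma$ lies on $\rho$ and on no MIS in $\mathcal A_1\setminus\{\rho\}$. Because $\mathcal A_2\setminus\{\rho\}\subseteq\mathcal A_1\setminus\{\rho\}$, the vertex $\gamma$ a fortiori lies on no MIS in $\mathcal A_2\setminus\{\rho\}$. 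Hence $\gamma\in\operatorname{Mono}_{(\mathcal A_2,\mathcal B_2)}(\rho)$, which proves the inclusion. The sets $\mathcal B_1,\mathcal B_2$ play no role beyond being the complements $\operatorname{MIS}(\mathcal T)\setminus\mathcal A_k$; the condition depends only on the retained set.

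For the weight inequality, I would observe that the absolute weight in Definition~\ref{def:absolute-weight} is a sum over the current mono-vertex set of a quantity depending only on the vertex and on the direction of $\rho$. That quantity is $\mu_h(\gamma)$ or $\mu_v(\gamma)$, and neither depends on the deletion state. Both are of the form $(\cdot)_+$ and therefore nonnegative. Summing a fixed nonnegative function over a larger finite set cannot decrease the sum, so the inclusion gives $\Omega_{(\mathcal A_1,\mathcal B_1)}(\rho)\le\Omega_{(\mathcal A_2,\mathcal B_2)}(\rho)$.

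There is no real obstacle. The only points to state explicitly are that the direction of $\rho$, and hence the choice between $\mu_h$ and $\mu_v$, is the same at both states, and that the multiplicities are nonnegative by the positive-part convention. The nonnegativity is what makes the inclusion of sets translate into monotonicity of the weights.
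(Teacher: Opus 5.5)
Your proposal is correct and follows essentially the same argument as the paper. Both use the inclusion $\mathcal A_2\setminus\{\rho\}\subseteq\mathcal A_1\setminus\{\rho\}$ to get the inclusion of current mono-vertex sets, and then use nonnegativity of the state-independent multiplicities $\mu_h$ or $\mu_v$ to pass from sets to weights. Your added remarks that $\rho\in\mathcal A_1$ and that the choice of $\mu_h$ versus $\mu_v$ is the same at both states are harmless clarifications.
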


\begin{proof}
Let
\[
    \gamma\in
    \operatorname{Mono}_{(\mathcal A_1,\mathcal B_1)}(\rho).
\]
Then $\gamma$ lies on $\rho$ and on no MIS in
$\mathcal A_1\setminus\{\rho\}$.  Since
\[
    \mathcal A_2\setminus\{\rho\}
    \subseteq
    \mathcal A_1\setminus\{\rho\},
\]
the vertex $\gamma$ also lies on no MIS in
$\mathcal A_2\setminus\{\rho\}$.  This proves the inclusion of the two
current mono-vertex sets.

If $\rho$ is horizontal, the corresponding absolute weights are sums of the
nonnegative values $\mu_h(\gamma)$ over these two sets; if $\rho$ is vertical,
the same argument applies with $\mu_v(\gamma)$.  The inclusion therefore gives
\[
    \Omega_{(\mathcal A_1,\mathcal B_1)}(\rho)
    \le
    \Omega_{(\mathcal A_2,\mathcal B_2)}(\rho),
\]
as claimed.
\end{proof}

We now prove uniqueness.  Once an MIS becomes removable, later deletions cannot
make it non-removable.  Hence every removable MIS eventually enters the
diagonalizable part, regardless of when it is selected.

\begin{theorem}[Uniqueness of the weighted CNDC]
\label{thm:cndc-uniqueness}
The weighted CNDC
\[
    \operatorname{CNDC}_{m,m'}^{\mathbf r}(\mathcal T)
\]
is independent of the order in which eligible MISs are removed.
\end{theorem}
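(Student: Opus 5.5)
The plan is the standard confluence argument for monotone deletion processes, based on Lemma~\ref{lem:absolute-weight-monotonicity}. First we fix what ``order in which eligible MISs are removed'' means. A \emph{valid deletion sequence} is any sequence of states
\[
(\operatorname{MIS}(\mathcal T),\varnothing)=(\mathcal A^{(0)},\mathcal B^{(0)}),\ (\mathcal A^{(1)},\mathcal B^{(1)}),\ \ldots,\ (\mathcal A^{(N)},\mathcal B^{(N)})
\]
such that each step removes a nonempty set of MISs. Every removed MIS must be diagonalizable (in the sense of Definition~\ref{def:diagonalizable-mis}) at the state immediately before its removal, and the final state must satisfy the stopping condition of Theorem~\ref{thm:cndc-characterization}. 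This class covers one-at-a-time removal, the batch passes of Algorithm~\ref{alg:weighted-cndc}, and any mixture of the two. Removing a whole batch $\mathcal Z$ at once is legitimate because each member is tested at the same state $(\mathcal A,\mathcal B)$.

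The key step shows that any two terminal states coincide. Let $\mathcal A$ and $\mathcal A'$ be the final remaining sets of two valid deletion sequences. We prove $\mathcal A\subseteq\mathcal A'$ by contradiction. Suppose some MIS removed in the first sequence lies in $\mathcal A'$. Choose the first such MIS $\rho$ in the first sequence, removed at a state $(\mathcal A^{(k)},\mathcal B^{(k)})$. Every MIS removed before step $k$ lies outside $\mathcal A'$, by minimality, and so $\mathcal A'\subseteq\mathcal A^{(k)}$. Since $\rho\in\mathcal A'$, Lemma~\ref{lem:absolute-weight-monotonicity} applies with $\mathcal A_1=\mathcal A^{(k)}$ and $\mathcal A_2=\mathcal A'$, and gives
\[
\theta(\rho)\le\Omega_{(\mathcal A^{(k)},\mathcal B^{(k)})}(\rho)\le\Omega_{(\mathcal A',\mathcal B')}(\rho).
\]
This contradicts the strict inequality of Theorem~\ref{thm:cndc-characterization} at the terminal state $(\mathcal A',\mathcal B')$. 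Hence $\mathcal A\subseteq\mathcal A'$, and by symmetry $\mathcal A=\mathcal A'$. The removed sets are complements in $\operatorname{MIS}(\mathcal T)$, so $\operatorname{Diag}_{m,m'}^{\mathbf r}(\mathcal T)$ is unique as well.

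The main obstacle is the minimality choice in the previous step. The first offending MIS must be selected with respect to the sequence whose removals are being examined, and the inclusion $\mathcal A'\subseteq\mathcal A^{(k)}$ must be checked carefully. Batches need particular care: if several MISs are removed in the same pass, we take $k$ to be the index of that pass. The earlier passes contain no element of $\mathcal A'$ by minimality, so the inclusion still holds. With that inclusion in place, Lemma~\ref{lem:absolute-weight-monotonicity} supplies the rest of the argument. We also remark that the deletion state $\mathcal B$ plays no role in $\operatorname{Mono}$, which depends only on $\mathcal A$, so comparing states with different $\mathcal B$ is harmless.

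Finally, we note that the common terminal set can be described intrinsically as the largest subset $\mathcal A^\ast\subseteq\operatorname{MIS}(\mathcal T)$ with $\Omega_{(\mathcal A^\ast,\operatorname{MIS}(\mathcal T)\setminus\mathcal A^\ast)}(\rho)<\theta(\rho)$ for all $\rho\in\mathcal A^\ast$. The same monotonicity argument shows that this set contains every such ``stable'' subset. This characterization also feeds directly into the later structural-invariance statement via Proposition~\ref{prop:absolute-weight-structural-invariance}.
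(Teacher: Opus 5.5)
Your argument is correct, but it organizes the proof differently from the paper. The paper anchors everything to the simultaneous-pass run of Algorithm~\ref{alg:weighted-cndc}, and its proof has two halves. First, by induction over the passes $\mathcal Z_1,\ldots,\mathcal Z_N$, it shows that every MIS removed by that run stays removable under any other process (by Lemma~\ref{lem:absolute-weight-monotonicity}), so that process must eventually remove it. Second, it shows that $\mathcal C$ stays inside the remaining set at every stage of any process, because $\Omega_{(\mathcal A,\mathcal B)}(\rho)\le\Omega_{(\mathcal C,\operatorname{MIS}(\mathcal T)\setminus\mathcal C)}(\rho)<\theta(\rho)$.

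Your minimal-counterexample step is exactly the paper's second half, stated for an arbitrary terminal set $\mathcal A'$ instead of $\mathcal C$. Since it uses only the stopping condition at $\mathcal A'$, you can apply it in both directions. The reversed application yields $\mathcal A'\subseteq\mathcal C$, which is precisely what the paper's pass-by-pass persistence induction establishes, so that induction becomes unnecessary.

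Your version is shorter and treats all terminating deletion sequences, in passes, one at a time, or mixed, on an equal footing. Your characterization of the common terminal set as the largest $\mathcal A^\ast$ with $\Omega(\rho)<\theta(\rho)$ for all $\rho\in\mathcal A^\ast$ is valid and gives an order-free description that the paper does not state. The paper's version, in exchange, explicitly records that each MIS eligible at some pass of the canonical run remains eligible until removed in any other process.

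One small slip: your contradiction shows that no MIS removed by the first sequence lies in $\mathcal A'$. That is $\mathcal A'\subseteq\mathcal A$, not $\mathcal A\subseteq\mathcal A'$ as you wrote. Since you conclude by symmetry, the equality $\mathcal A=\mathcal A'$ is unaffected.
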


\begin{proof}
Run Algorithm~\ref{alg:weighted-cndc} in simultaneous passes and let
\[
\mathcal Z_1,\ldots,\mathcal Z_N
\]
be the nonempty sets removed in the successive passes.  Let
\[
\mathcal C
=
\operatorname{CNDC}_{m,m'}^{\mathbf r}(\mathcal T)
\]
be the final remaining set.  We compare this process with any other process that removes eligible MISs,
one at a time or in groups, until no further deletion is possible.

First we show, by induction on $k$, that every MIS in
$\mathcal Z_1\cup\cdots\cup\mathcal Z_k$ must eventually be removed by the
arbitrary process.  Every $\rho\in\mathcal Z_1$ is removable in the initial
state.  By Lemma~\ref{lem:absolute-weight-monotonicity}, deleting other MISs can
only increase its absolute weight.  Hence $\rho$ remains removable until it is
removed, and the process cannot stop while $\rho$ remains.

Assume the assertion is true through pass $k-1$.  After the other process
has removed all MISs in
$\mathcal Z_1\cup\cdots\cup\mathcal Z_{k-1}$, its remaining set is contained
in the remaining set on which Algorithm~\ref{alg:weighted-cndc} tests the elements of
$\mathcal Z_k$.  Every $\rho\in\mathcal Z_k$ was removable in that larger
remaining set.  Monotonicity therefore shows that it is still removable in the
possibly smaller remaining set of the other process.  Again it remains removable
until it is removed.  Thus every element of $\mathcal Z_k$ must eventually be removed.
This proves that every MIS outside $\mathcal C$ is removed by every process
that continues until no further deletion is possible.

It remains to show that no MIS in $\mathcal C$ can be removed.  We prove
inductively that $\mathcal C$ is contained in the remaining set
$\mathcal A$ at every stage of any such process.  This is true initially.
Assume $\mathcal C\subseteq\mathcal A$.  For
$\rho\in\mathcal C$, Lemma~\ref{lem:absolute-weight-monotonicity}, applied with
the final remaining set $\mathcal C$ as the smaller set, gives
\[
\Omega_{(\mathcal A,\mathcal B)}(\rho)
\le
\Omega_{\left(
\mathcal C,
\operatorname{MIS}(\mathcal T)\setminus\mathcal C
\right)}(\rho)
<
\theta(\rho),
\]
where the strict inequality is the stopping condition of
Algorithm~\ref{alg:weighted-cndc}.  Hence no element of $\mathcal C$ is
removable at that state, so the inclusion remains true after the next deletion.

Every process that continues until no further deletion is possible therefore
removes exactly the MISs outside
$\mathcal C$ and keeps exactly the MISs in $\mathcal C$.  The set of remaining MISs is unique.
\end{proof}

The next theorem shows that the construction is preserved by a direction-preserving
structural isomorphism when corresponding edges have the same smoothness orders.

\begin{theorem}[Structural invariance of the weighted CNDC]
\label{thm:cndc-structural-invariance}
Let
\[
\alpha:
\widehat{\operatorname{MS}}(\mathcal T)
\longrightarrow
\widehat{\operatorname{MS}}(\mathcal T')
\]
be a direction-preserving structural isomorphism such that corresponding
edges have the same smoothness orders under $\mathbf r$ and $\mathbf r'$.  Then
\[
\alpha\!\left(
\operatorname{CNDC}_{m,m'}^{\mathbf r}(\mathcal T)
\right)
=
\operatorname{CNDC}_{m,m'}^{\mathbf r'}(\mathcal T').
\]
Thus corresponding pairs in the same structural class have corresponding
weighted CNDCs.
\end{theorem}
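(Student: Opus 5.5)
The plan is to run Algorithm~\ref{alg:weighted-cndc} on $\mathcal T$ in simultaneous passes and to transport the entire run through $\alpha$. Let $(\mathcal A_k,\mathcal B_k)$, $k=0,\ldots,N$, be the successive states on $\mathcal T$, with $\mathcal A_0=\operatorname{MIS}(\mathcal T)$ and $\mathcal B_0=\varnothing$. Let $\mathcal Z_{k+1}$ be the set removed in pass $k+1$. Condition~\textup{(i)} of Definition~\ref{def:structural-isomorphism} gives $\alpha(\operatorname{MIS}(\mathcal T))=\operatorname{MIS}(\mathcal T')$. Hence each $(\alpha(\mathcal A_k),\alpha(\mathcal B_k))$ is a legitimate deletion state of $\operatorname{MIS}(\mathcal T')$: the two sets are disjoint and their union is all of $\operatorname{MIS}(\mathcal T')$.

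I will show by induction on $k$ that the simultaneous-pass run of the algorithm on $(\mathcal T',\mathbf r')$ passes through exactly the states $(\alpha(\mathcal A_k),\alpha(\mathcal B_k))$. The base case holds because $\alpha(\mathcal A_0)=\operatorname{MIS}(\mathcal T')$ and $\alpha(\mathcal B_0)=\varnothing$.

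For the inductive step, fix a stage $k$. Proposition~\ref{prop:absolute-weight-structural-invariance} gives
\[
\Omega_{(\mathcal A_k,\mathcal B_k)}(\rho)=\Omega_{(\alpha(\mathcal A_k),\alpha(\mathcal B_k))}(\alpha(\rho))
\]
for every $\rho\in\mathcal A_k$. Since $\alpha$ is direction-preserving, $\rho$ and $\alpha(\rho)$ have the same direction. The ordered bi-degree $(m,m')$ is fixed, so the thresholds agree: $\theta(\rho)=\theta(\alpha(\rho))$. Therefore $\rho$ passes the deletion test at stage $k$ exactly when $\alpha(\rho)$ does. It follows that the set removed on $\mathcal T'$ in pass $k+1$ is $\alpha(\mathcal Z_{k+1})$, and the next state is $(\alpha(\mathcal A_{k+1}),\alpha(\mathcal B_{k+1}))$. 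In particular, the run on $\mathcal T'$ stops at the same pass $N$, since $\alpha(\mathcal Z)=\varnothing$ exactly when $\mathcal Z=\varnothing$.

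The final remaining set on $\mathcal T'$ is therefore $\alpha(\mathcal A_N)=\alpha(\operatorname{CNDC}_{m,m'}^{\mathbf r}(\mathcal T))$. By Theorem~\ref{thm:cndc-uniqueness}, the weighted CNDC on $\mathcal T'$ does not depend on the order in which eligible MISs are removed, so this set is $\operatorname{CNDC}_{m,m'}^{\mathbf r'}(\mathcal T')$. As a cross-check, Theorem~\ref{thm:cndc-characterization} applies directly: by the weight equality, every $\alpha(\rho)$ with $\rho\in\mathcal A_N$ fails the test on $\mathcal T'$.

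The only delicate point is the threshold match, and it is exactly where the direction-preserving hypothesis is used. A direction-interchanging isomorphism would swap $m+1$ and $m'+1$, as well as $\mu_h$ and $\mu_v$. The argument would then break unless the bi-degree were swapped too. The rest of the proof reduces to Proposition~\ref{prop:absolute-weight-structural-invariance}.
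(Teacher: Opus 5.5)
Your proof is correct and follows the paper's argument: you transport each state of the simultaneous-pass run through $\alpha$, use Proposition~\ref{prop:absolute-weight-structural-invariance} together with the threshold match that direction preservation gives, and induct over the passes. The appeal to Theorem~\ref{thm:cndc-uniqueness} is harmless but not needed, because the simultaneous-pass algorithm is already deterministic: $\mathcal Z$ is computed at a fixed state, so it does not depend on the iteration order.
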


\begin{proof}
Let $(\mathcal A,\mathcal B)$ be any state of
Algorithm~\ref{alg:weighted-cndc} for $\mathcal T$.  Its image
$(\alpha(\mathcal A),\alpha(\mathcal B))$ is the corresponding state for
$\mathcal T'$, because the structural isomorphism preserves MIS type.  By
Proposition~\ref{prop:absolute-weight-structural-invariance}, for every
$\rho\in\mathcal A$,
\[
\Omega_{(\mathcal A,\mathcal B)}(\rho)
=
\Omega_{(\alpha(\mathcal A),\alpha(\mathcal B))}
\bigl(\alpha(\rho)\bigr).
\]
The fixed ordered bi-degree and the equality of corresponding smoothness
orders preserve the thresholds.  Hence $\rho$ satisfies the deletion condition if and only if
$\alpha(\rho)$ does.

Starting from
\[
(\operatorname{MIS}(\mathcal T),\varnothing),
\qquad
(\operatorname{MIS}(\mathcal T'),\varnothing),
\]
an induction over the passes of the algorithm shows that the sets removed at
each pass correspond under $\alpha$.  The final remaining sets therefore
correspond, proving the result.
\end{proof}

We now return to the spline dimension.  Initially, all MISs appear in the quotient representation of the correction homology.  By reversing the deletion history and applying Theorem~\ref{thm:homological-deletion}, we can remove every MIS in the diagonalizable part.  The same correction space is then represented using only the weighted CNDC.  The Euler characteristic term remains that of the quotient complex on the original T-mesh, and all relations from the original T-mesh are retained.

\begin{theorem}[Dimension formula using the weighted CNDC]
\label{thm:cndc-dimension-formula}
One has
\[
\begin{aligned}
\dim \mathcal S_{m,m'}^{\mathbf r}(\mathcal T)
={}&
\chi\!\left(
\mathcal Q_{m,m'}^{\mathbf r}(\mathcal T^\circ)
\right)
\\
&+
\dim
\mathcal H_{m,m'}^{\mathbf r}
\left(
\operatorname{CNDC}_{m,m'}^{\mathbf r}(\mathcal T);
\mathcal T
\right).
\end{aligned}
\]
The Euler characteristic term is evaluated on the original T-mesh $\mathcal T$.  No new quotient complex is associated with the weighted CNDC.
\end{theorem}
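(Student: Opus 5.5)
The plan is to combine the homological dimension formula with a chain of deletion isomorphisms. Everything needed is already in Theorem~\ref{thm:homological-deletion}; what remains is to check that its relative-weight hypothesis holds along the order obtained by reversing the deletion history.

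First, run Algorithm~\ref{alg:weighted-cndc} in simultaneous passes. Let $\mathcal Z_1,\ldots,\mathcal Z_N$ be the nonempty sets removed in successive passes, and let $\mathcal C=\operatorname{CNDC}_{m,m'}^{\mathbf r}(\mathcal T)$. By Theorem~\ref{thm:cndc-uniqueness}, $\mathcal C$ does not depend on this choice of process.

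Next, build a total order $\iota$ of $\operatorname{MIS}(\mathcal T)$. List the elements of $\mathcal Z_1$ first, in any internal order, then those of $\mathcal Z_2$, and so on through $\mathcal Z_N$, and finally the elements of $\mathcal C$ in any order. Write this as $\rho_1\succ\cdots\succ\rho_t$, so that $\mathcal C=\mathcal A_{b+1}$ with $b=|\operatorname{Diag}_{m,m'}^{\mathbf r}(\mathcal T)|$. This is the first-removed-first order. It is the correct one because Theorem~\ref{thm:homological-deletion} peels off $\rho_i$ from $\mathcal A_i=\{\rho_i,\ldots,\rho_t\}$, so the MIS removed first must carry index $1$. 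The paper's phrase ``reversing the history'' refers to this same order read as a peeling sequence.

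The main step is to verify $\omega_\iota(\rho_i)\ge\theta(\rho_i)$ for every $i\le b$. Suppose $\rho_i\in\mathcal Z_k$, and let $(\mathcal A,\mathcal B)$ be the state at the start of pass $k$, so that $\mathcal A=\mathcal C\cup\mathcal Z_k\cup\cdots\cup\mathcal Z_N$. Take $\gamma\in\operatorname{Mono}_{(\mathcal A,\mathcal B)}(\rho_i)$. Then $\gamma$ lies on no MIS of $\mathcal A\setminus\{\rho_i\}$. Every $\rho_j$ with $j<i$ lies either in $\mathcal Z_1\cup\cdots\cup\mathcal Z_{k-1}\subseteq\mathcal B$ or in $\mathcal Z_k\setminus\{\rho_i\}\subseteq\mathcal A\setminus\{\rho_i\}$.

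This is where the obstacle sits. A vertex of $\rho_i$ on some $\rho_j\in\mathcal B$ with $j<i$ would be excluded from $\Gamma_\iota(\rho_i)$, even though it may have been counted as a current mono-vertex. I would handle this by checking the definition of $\Gamma_\iota$ carefully: it consists of vertices of $\rho_i$ not lying on any earlier $\rho_j$, that is, on no already-peeled MIS. The relations at such vertices have both summands in $\mathcal A_i$ and are therefore usable. With the first-removed-first order, the earlier MISs are exactly those removed before $\rho_i$ or in the same pass.

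So the inclusion $\operatorname{Mono}_{(\mathcal A,\mathcal B)}(\rho_i)\subseteq\Gamma_\iota(\rho_i)$ needs the relation at $\gamma$ to lie in $\mathcal M(\mathcal A_i)\cap\mathcal R(\mathcal T)$. If that fails with this order, I would instead take the genuinely reversed order, $\mathcal C$ first and then $\mathcal Z_N,\ldots,\mathcal Z_1$. In that order the MISs before $\rho_i$ are those still present when $\rho_i$ was removed, and the inclusion is exactly the one asserted in the paper after Definition~\ref{def:absolute-weight}. I would then apply Theorem~\ref{thm:homological-deletion} in the form the paper states it there, namely with $\Gamma_\iota(\rho_i)\supseteq\operatorname{Mono}_{(\mathcal A,\mathcal B)}(\rho_i)$. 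Either way, it follows that $\omega_\iota(\rho_i)\ge\Omega_{(\mathcal A,\mathcal B)}(\rho_i)\ge\theta(\rho_i)$, because $\mu_h$ and $\mu_v$ are nonnegative.

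Finally, apply Theorem~\ref{thm:homological-deletion} for $i=1,\ldots,b$. This gives the chain
\[
H_0\bigl(\mathcal I_{m,m'}^{\mathbf r}(\mathcal T^\circ)\bigr)\cong\mathcal H_{m,m'}^{\mathbf r}(\mathcal A_1;\mathcal T)\cong\cdots\cong\mathcal H_{m,m'}^{\mathbf r}(\mathcal A_{b+1};\mathcal T)=\mathcal H_{m,m'}^{\mathbf r}(\mathcal C;\mathcal T).
\]
The first isomorphism is Proposition~\ref{prop:mis-representation}. Substituting into Theorem~\ref{thm:homological-dimension-formula} gives the formula, equivalently via the explanatory corollary on the Euler characteristic. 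The $\chi$ term is that of the original $\mathcal T$, since no deletion alters $\mathcal T$ or $\mathcal Q_{m,m'}^{\mathbf r}(\mathcal T^\circ)$.
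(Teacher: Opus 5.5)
There is a genuine gap, and it is the choice of order. The chain in your last paragraph needs a total order $\iota$ with two properties at once. (a) The diagonalizable MISs occupy positions $1,\ldots,b$, so that $\mathcal A_{b+1}=\mathcal C$. (b) If $\rho_i\in\mathcal Z_k$, then every $\rho_j$ with $j<i$ lies in $\mathcal A^{(k)}\setminus\{\rho_i\}$, where $\mathcal A^{(k)}=\mathcal C\cup\mathcal Z_k\cup\cdots\cup\mathcal Z_N$ is the remaining set at the start of pass $k$; only then is every current mono-vertex of $\rho_i$ in $\Gamma_\iota(\rho_i)$.

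Your first-removed-first order $\mathcal Z_1\succ\cdots\succ\mathcal Z_N\succ\mathcal C$ satisfies (a) but not (b). A current mono-vertex of $\rho_i\in\mathcal Z_k$ ($k\ge2$) that lies on an MIS of $\mathcal Z_1\cup\cdots\cup\mathcal Z_{k-1}$ is excluded from $\Gamma_\iota(\rho_i)$. Its two-term relation involves an already-peeled summand, so the hypothesis of Theorem~\ref{thm:homological-deletion} can fail. For instance, take Fig.~\ref{fig:basic-tmesh} with $m=m'=1$ and $\mathbf r\equiv0$ on interior edges, so all $\mu=1$ and all thresholds are $2$. The algorithm removes $\rho_v$ in pass $1$ and then $\rho_h$ in pass $2$ using $v_1,v_2$. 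But in the order $\rho_v\succ\rho_h$ one has $\Gamma_\iota(\rho_h)=\{v_1\}$ and $\omega_\iota(\rho_h)=1<2$; this is the paper's own illustration after Definition~\ref{def:relative-weight}.

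Your fallback $\mathcal C\succ\mathcal Z_N\succ\cdots\succ\mathcal Z_1$ satisfies (b) but not (a). Its first $|\mathcal C|$ entries lie in the CNDC, where the deletion hypothesis is not guaranteed, and its suffix $\mathcal A_{b+1}$ is not $\mathcal C$. So neither order justifies the chain $\mathcal H(\mathcal A_1)\cong\cdots\cong\mathcal H(\mathcal A_{b+1})=\mathcal H(\mathcal C)$. Also, the paper's ``reversed history'' is not the first-removed-first order.

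The missing idea is that the MIS removed \emph{last} by the algorithm must be peeled \emph{first}. The paper uses $\mathcal Z_N\succ\mathcal Z_{N-1}\succ\cdots\succ\mathcal Z_1\succ\mathcal C$. When $\rho\in\mathcal Z_k$ is peeled, the MISs whose intersections with $\rho$ became current mono-vertices lie in $\mathcal Z_1\cup\cdots\cup\mathcal Z_{k-1}$. These come later in the order, so they are still in $\mathcal A_i$, and the corresponding two-term relations lie in $\mathcal M(\mathcal A_i)\cap\mathcal R(\mathcal T)$. This order has both (a) and (b). The rest of your argument then goes through as in the paper: Proposition~\ref{prop:mis-representation}, the inequality $\omega_\iota\ge\Omega\ge\theta$, and substitution into Theorem~\ref{thm:homological-dimension-formula}.

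Alternatively, your fallback order can be rescued by one step you did not supply. Applying Theorem~\ref{thm:homological-deletion} only for $i>|\mathcal C|$ gives $\mathcal H_{m,m'}^{\mathbf r}(\operatorname{Diag}_{m,m'}^{\mathbf r}(\mathcal T);\mathcal T)=0$, that is, $\mathcal M(\operatorname{Diag}_{m,m'}^{\mathbf r}(\mathcal T))\subseteq\mathcal R(\mathcal T)$. Hence $\mathcal M(\mathcal C)\to\mathcal M(\operatorname{MIS}(\mathcal T))/\mathcal R(\mathcal T)$ is surjective with kernel $\mathcal R_{\mathcal C}(\mathcal T)$, which yields the same isomorphism.
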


\begin{proof}
By Theorem~\ref{thm:homological-dimension-formula},
\[
\begin{aligned}
\dim \mathcal S_{m,m'}^{\mathbf r}(\mathcal T)
={}&
\chi\!\left(
\mathcal Q_{m,m'}^{\mathbf r}(\mathcal T^\circ)
\right)
+
\dim H_0\!\left(
\mathcal I_{m,m'}^{\mathbf r}(\mathcal T^\circ)
\right).
\end{aligned}
\]
Proposition~\ref{prop:mis-representation} identifies the correction
homology with
$\mathcal H_{m,m'}^{\mathbf r}(\operatorname{MIS}(\mathcal T);\mathcal T)$.

Let $\mathcal Z_1,\ldots,\mathcal Z_N$ be the nonempty sets removed in the
successive passes of Algorithm~\ref{alg:weighted-cndc}.  Form a total
order by listing first the MISs in $\mathcal Z_N$, then those in
$\mathcal Z_{N-1}$, and so on down to $\mathcal Z_1$, with arbitrary order
inside each $\mathcal Z_k$, and finally listing the MISs in the weighted CNDC.
Consider $\rho\in\mathcal Z_k$.  At the pass in which $\rho$ is removed, its
absolute weight satisfies
\[
    \Omega_{(\mathcal A,\mathcal B)}(\rho)\ge\theta(\rho).
\]
As observed after Definition~\ref{def:absolute-weight}, its relative vertex set
in this total order
contains the current mono-vertex set used by the algorithm.  Hence
\[
    \omega_\iota(\rho)
    \ge
    \Omega_{(\mathcal A,\mathcal B)}(\rho)
    \ge
    \theta(\rho).
\]
Theorem~\ref{thm:homological-deletion} can therefore be applied successively to
all MISs in
$\operatorname{Diag}_{m,m'}^{\mathbf r}(\mathcal T)$ in this reverse order of deletion.
Because the correction space retains all induced relations, every step gives an
isomorphism, and the final remaining set is exactly
$\operatorname{CNDC}_{m,m'}^{\mathbf r}(\mathcal T)$.  Consequently,
\[
    \dim H_0\!\left(
    \mathcal I_{m,m'}^{\mathbf r}(\mathcal T^\circ)
    \right)
    =
    \dim \mathcal H_{m,m'}^{\mathbf r}
    \left(
    \operatorname{CNDC}_{m,m'}^{\mathbf r}(\mathcal T);\mathcal T
    \right).
\]

Substitution into the homological dimension formula proves the result.
\end{proof}

\begin{corollary}[Dimensional stability and the remaining relations]
\label{cor:cndc-stability-characterization}
For each
$(\mathcal T',\mathbf r')\in[\mathcal T,\mathbf r]_{\mathrm I}^{+}$, set
\[
\mathcal C(\mathcal T',\mathbf r')
:=
\operatorname{CNDC}_{m,m'}^{\mathbf r'}(\mathcal T').
\]
Then $(\mathcal T,\mathbf r)$ is dimensionally stable if and only if
\[
\dim
\mathcal H_{m,m'}^{\mathbf r'}
\bigl(\mathcal C(\mathcal T',\mathbf r');\mathcal T'\bigr)
\]
is constant on $[\mathcal T,\mathbf r]_{\mathrm I}^{+}$.

Equivalently, after choosing corresponding bases for the spaces
$\mathcal M(\mathcal C(\mathcal T',\mathbf r'))$, dimensional stability
holds if and only if
\[
\operatorname{rank}
K_{\mathcal C(\mathcal T',\mathbf r')}(\mathcal T')
\]
is constant on the structural class.
\end{corollary}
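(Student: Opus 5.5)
The plan is to combine Theorem~\ref{thm:cndc-dimension-formula} with Lemma~\ref{lem:euler-characteristic-structural-invariance}, in the same way as the proof of Corollary~\ref{cor:dimension-comparison-after-deletion}.

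\textbf{Step 1: apply the dimension formula to each pair in the class.} For every $(\mathcal T',\mathbf r')\in[\mathcal T,\mathbf r]_{\mathrm I}^{+}$, the smoothness distribution $\mathbf r'$ is again constant along each maximal segment. Indeed, $\alpha_1$ maps the edges of a maximal segment $\rho$ onto the edges of $\alpha(\rho)$, and it preserves the smoothness orders. So all earlier results apply to $(\mathcal T',\mathbf r')$. Theorem~\ref{thm:cndc-dimension-formula}, applied to this pair, gives
\[
\dim\mathcal S_{m,m'}^{\mathbf r'}(\mathcal T')
=\chi\!\left(\mathcal Q_{m,m'}^{\mathbf r'}((\mathcal T')^\circ)\right)
+\dim\mathcal H_{m,m'}^{\mathbf r'}\bigl(\mathcal C(\mathcal T',\mathbf r');\mathcal T'\bigr).
\]

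\textbf{Step 2: cancel the Euler characteristic terms.} By Lemma~\ref{lem:euler-characteristic-structural-invariance}, the first summand equals $\chi(\mathcal Q_{m,m'}^{\mathbf r}(\mathcal T^\circ))$ for every pair in the class. Subtracting the reference formula therefore gives, for each pair,
\[
\dim\mathcal S_{m,m'}^{\mathbf r'}(\mathcal T')-\dim\mathcal S_{m,m'}^{\mathbf r}(\mathcal T)
=\dim\mathcal H_{m,m'}^{\mathbf r'}\bigl(\mathcal C(\mathcal T',\mathbf r');\mathcal T'\bigr)
-\dim\mathcal H_{m,m'}^{\mathbf r}\bigl(\mathcal C(\mathcal T,\mathbf r);\mathcal T\bigr).
\]
By Definition~\ref{def:dimensional-stability}, stability means that the left-hand side vanishes for every pair. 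Hence stability is equivalent to constancy of the remaining correction dimension, which proves the first claim.

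\textbf{Step 3: pass to the rank formulation.} The main point to check here is that $\dim\mathcal M(\mathcal C(\mathcal T',\mathbf r'))$ is itself constant on the class.
\begin{itemize}
\item By Theorem~\ref{thm:cndc-structural-invariance}, $\mathcal C(\mathcal T',\mathbf r')=\alpha(\mathcal C(\mathcal T,\mathbf r))$.
\item Corresponding MISs have the same direction, because $\alpha$ is direction-preserving.
\item Corresponding MISs have the same smoothness order $r(\rho)$.
\item Therefore the coefficient spaces $\mathcal B_\rho$ of Proposition~\ref{prop:mis-representation} agree: each is $R_{m,m'-r(\rho)-1}$ or $R_{m-r(\rho)-1,m'}$ according to direction.
\end{itemize}
This is what makes ``corresponding bases'' meaningful, and it shows that $\dim\mathcal M(\mathcal C(\mathcal T',\mathbf r'))$ is constant on the class. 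The identity
\[
\dim\mathcal H=\dim\mathcal M-\operatorname{rank}K
\]
then shows that constancy of $\dim\mathcal H$ is equivalent to constancy of $\operatorname{rank}K_{\mathcal C(\mathcal T',\mathbf r')}(\mathcal T')$. The rank does not depend on the choice of spanning set, so the choice of matrix is irrelevant.

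I do not expect a real obstacle. The only care needed is in Step 3: the entries of $K$ genuinely vary with the line coordinates, while the ambient dimension $\dim\mathcal M$ must be confirmed invariant via Theorem~\ref{thm:cndc-structural-invariance}.
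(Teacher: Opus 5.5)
Your proposal is correct and takes the same approach as the paper. The Euler characteristic invariance lemma cancels the first term of the CNDC dimension formula, and structural invariance of the weighted CNDC, with direction preservation and equal smoothness orders, makes $\dim\mathcal M(\mathcal C(\mathcal T',\mathbf r'))$ constant, so $\dim\mathcal H=\dim\mathcal M-\operatorname{rank}K$ gives the rank version. Your explicit check that $\mathbf r'$ is again constant along each maximal segment, so that the earlier results apply to $(\mathcal T',\mathbf r')$, is a small point the paper leaves implicit.
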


\begin{proof}
By Lemma~\ref{lem:euler-characteristic-structural-invariance}, the
Euler characteristic term in Theorem~\ref{thm:cndc-dimension-formula} is
constant on the structural class.  By
Theorem~\ref{thm:cndc-structural-invariance}, corresponding weighted CNDCs have
the same MIS types and the same smoothness orders.  Hence
$\dim\mathcal M(\mathcal C(\mathcal T',\mathbf r'))$ is also constant.
Theorem~\ref{thm:cndc-dimension-formula} and
\[
\dim\mathcal H
=
\dim\mathcal M-\operatorname{rank}K
\]
therefore give the two equivalences.
\end{proof}

\begin{remark}[Role of the remaining relation space]
\label{rem:cndc-determines-stability}
The weighted CNDC identifies the MIS summands needed to represent the remaining
correction term.  It does not, by itself, determine the relation space among those
summands.  The latter is
\[
\mathcal R_{\mathcal C}(\mathcal T)
=
\mathcal M(\mathcal C)\cap\mathcal R(\mathcal T),
\]
Because $\mathcal R_{\mathcal C}(\mathcal T)$ is obtained by intersecting with
the full relation space $\mathcal R(\mathcal T)$, it also contains relations
produced by eliminating removed summands.  Their coefficients may depend on mesh
lines outside the weighted CNDC.  The results above reduce the stability problem
to the dimension of this relation space; they do not show that it is determined
only by the coordinates of the MISs in the weighted CNDC.
\end{remark}

The following example shows that the weighted CNDC alone does not determine the correction term: the same CNDC can lead to different ranks for the remaining relations.  The example has highest-order smoothness and is therefore a special case of the present setting.

\begin{example}[A nonempty CNDC with geometry-dependent rank]
\label{ex:unstable-cndc}
Let $m=m'=3$ and impose smoothness order $2$ on every interior edge.  Consider
the T-mesh in Fig.~\ref{fig:unstable-cndc}.  Its four thick interior segments are
the four MISs.  In the highest-order terminology of \citet{HuangChen2024}, each
of these MISs has fewer than $d+1=4$ mono-vertices, so no MIS is removed by the
highest-order deletion rule.  Hence the CNDC is the T-connected component formed by all four MISs.

\begin{figure}[htb]
\centering
\begin{tikzpicture}[scale=0.82]
    \draw (1,1) rectangle (6,6);
    \draw (1,5) -- (6,5);
    \draw (1,2) -- (6,2);
    \draw (2,1) -- (2,6);
    \draw (5,1) -- (5,6);
    \draw (1.5,1) -- (1.5,6);
    \draw (1,5.5) -- (6,5.5);
    \draw (1,1.5) -- (6,1.5);
    \draw (5.5,1) -- (5.5,6);

    \draw[line width=1.5pt] (3,5.5) -- (3,2);
    \draw[line width=1.5pt] (1.5,4) -- (5,4);
    \draw[line width=1.5pt] (2,3) -- (5.5,3);
    \draw[line width=1.5pt] (4,1.5) -- (4,5);

    \fill (3,5.5) circle (1.8pt) node[above right] {$v_1$};
    \fill (4,5) circle (1.8pt) node[above right] {$v_2$};
    \fill (1.5,4) circle (1.8pt) node[above right] {$v_3$};
    \fill (5,4) circle (1.8pt) node[above right] {$v_4$};
    \fill (2,3) circle (1.8pt) node[above right] {$v_5$};
    \fill (5.5,3) circle (1.8pt) node[above right] {$v_6$};
    \fill (3,2) circle (1.8pt) node[above right] {$v_7$};
    \fill (4,1.5) circle (1.8pt) node[below right] {$v_8$};
\end{tikzpicture}
\caption{A highest-order smoothness T-mesh with a nonempty CNDC.  The four thick segments are its MISs.}
\label{fig:unstable-cndc}
\end{figure}

Let the relevant vertical line coordinates be
$s_2<\cdots<s_7$ and the relevant horizontal line coordinates be
$t_2<\cdots<t_7$.  The smoothing-cofactor calculation for this example
\citep{LiChen2011,HuangChen2024} reduces the full conformality matrix $M_{\mathrm{conf}}$ by
invertible row and column operations to an identity block of size $12$ and the
matrix
\[
\widetilde M=
\begin{pmatrix}
1 & a & 0 & 0\\
b & 0 & 1 & 0\\
0 & 0 & c & 1\\
0 & d & 0 & 1
\end{pmatrix},
\]
where
\[
\begin{aligned}
a&=f_{3,6,7}(t_{4,5}),&
b&=f_{2,3,6}(s_{4,5}),\\
c&=f_{2,3,6}(t_{5,4}),&
d&=f_{3,6,7}(s_{4,5}),
\end{aligned}
\]
and
\[
f_{i,j,k}(x_{p,q})
=
\frac{(x_p-x_i)(x_p-x_j)(x_p-x_k)}
     {(x_q-x_i)(x_q-x_j)(x_q-x_k)}.
\]
Thus
\[
\det\widetilde M=d-cba,
\qquad
\operatorname{rank}M_{\mathrm{conf}}
=
12+\operatorname{rank}\widetilde M.
\]
The corresponding spline dimension is
\[
\dim S_3(\mathcal T)
=
\begin{cases}
65,&\det\widetilde M=0,\\
64,&\det\widetilde M\ne0.
\end{cases}
\]

Both cases occur while the mesh structure is unchanged.  For example, take
\[
(t_2,t_3,t_4,t_5,t_6,t_7)=(0,1,2,3,4,5)
\]
and initially
\[
(s_2,s_3,s_4,s_5,s_6,s_7)=(0,1,2,3,4,5).
\]
Then
\[
a=\frac32,\qquad
b=\frac23,\qquad
c=\frac32,\qquad
d=\frac32,
\]
so $\det\widetilde M=0$ and the dimension is $65$.  If only $s_7$ is
changed from $5$ to $6$, the order of the vertical lines is unchanged,
$b=2/3$, while $d=4/3$.  Hence
\[
\det\widetilde M
=
\frac43-\frac32\cdot\frac23\cdot\frac32
=
-\frac16\ne0,
\]
and the dimension is $64$.

This example has the same nonempty CNDC in both realizations, but the matrix of
the remaining relations has different ranks.  Thus the weighted CNDC is fixed by
the structure, whereas the rank may still depend on the geometric realization.
\end{example}

We next bound the remaining correction term.  Let
$\mathcal Z_1,\ldots,\mathcal Z_N$ be the nonempty sets removed in the
successive passes of Algorithm~\ref{alg:weighted-cndc}, and put
\[
\mathcal C
:=
\operatorname{CNDC}_{m,m'}^{\mathbf r}(\mathcal T).
\]
To apply the ordered estimate, reverse the deletion history.  We say that a total
order $\widehat\iota$ of $\operatorname{MIS}(\mathcal T)$ follows this reversed
history if it lists first the MISs in $\mathcal Z_N$, then those in
$\mathcal Z_{N-1}$, and so on down to $\mathcal Z_1$, and finally the MISs in
$\mathcal C$.  The order within each $\mathcal Z_k$ and within $\mathcal C$
may be chosen arbitrarily.  Any order of the weighted CNDC can therefore be
extended to an order of this form.  All relative vertex sets and weights below
are computed in the resulting order of the full MIS set.

For each MIS $\rho$, define
\[
\beta^{\mathbf r}(\rho)
:=
\begin{cases}
\bigl(m'-r(\rho)\bigr)_+,
&\rho\in\operatorname{MIS}_h(\mathcal T),\\[1mm]
\bigl(m-r(\rho)\bigr)_+,
&\rho\in\operatorname{MIS}_v(\mathcal T).
\end{cases}
\]
This is the dimension of the transverse factor of $\mathcal B_\rho$.

\begin{theorem}[Upper bound for the remaining correction term]
\label{thm:cndc-upper-bound}
For every total order $\widehat\iota$ that follows the reversed deletion
history, one has
\[
\begin{aligned}
0
\le{}&
\dim\mathcal H_{m,m'}^{\mathbf r}(\mathcal C;\mathcal T)
\\
\le{}&
\sum_{\substack{\rho\in\mathcal C\\
\rho\in\operatorname{MIS}_h(\mathcal T)}}
\bigl(m+1-\omega_{\widehat\iota}(\rho)\bigr)_+
\beta^{\mathbf r}(\rho)
\\
&+
\sum_{\substack{\rho\in\mathcal C\\
\rho\in\operatorname{MIS}_v(\mathcal T)}}
\bigl(m'+1-\omega_{\widehat\iota}(\rho)\bigr)_+
\beta^{\mathbf r}(\rho).
\end{aligned}
\]
Consequently,
\[
\begin{aligned}
\dim\mathcal S_{m,m'}^{\mathbf r}(\mathcal T)
\le{}&
\chi\!\left(
\mathcal Q_{m,m'}^{\mathbf r}(\mathcal T^\circ)
\right)
\\
&+
\sum_{\substack{\rho\in\mathcal C\\
\rho\in\operatorname{MIS}_h(\mathcal T)}}
\bigl(m+1-\omega_{\widehat\iota}(\rho)\bigr)_+
\beta^{\mathbf r}(\rho)
\\
&+
\sum_{\substack{\rho\in\mathcal C\\
\rho\in\operatorname{MIS}_v(\mathcal T)}}
\bigl(m'+1-\omega_{\widehat\iota}(\rho)\bigr)_+
\beta^{\mathbf r}(\rho).
\end{aligned}
\]
\end{theorem}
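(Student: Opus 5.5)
The plan is a filtration argument on the retained summands of the weighted CNDC, bounding each successive quotient by the univariate generation formula, exactly as in step~(3) of the proof of Theorem~\ref{thm:homological-deletion}. The lower bound is trivial since $\mathcal H$ is a vector space.

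Write the restriction of $\widehat\iota$ to $\mathcal C$ as $\rho_{p}\succ\rho_{p+1}\succ\cdots\succ\rho_t$, so that $\mathcal C=\mathcal A_p$ in the notation of Theorem~\ref{thm:homological-deletion}. For $i=p,\ldots,t$ I will show
\[
\dim\mathcal H_{m,m'}^{\mathbf r}(\mathcal A_i;\mathcal T)
\le
\dim\mathcal H_{m,m'}^{\mathbf r}(\mathcal A_{i+1};\mathcal T)
+
\bigl(\theta(\rho_i)-\omega_{\widehat\iota}(\rho_i)\bigr)_+\beta^{\mathbf r}(\rho_i),
\]
with $\mathcal H(\varnothing;\mathcal T)=0$. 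Summing these inequalities telescopes to the first bound, and substituting into Theorem~\ref{thm:cndc-dimension-formula} gives the bound on $\dim\mathcal S$.

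To prove the step inequality, keep the notation $\mathcal V_i,\mathcal W_i,\mathcal R_i$ and the projection $\pi_i:\mathcal R_i\to[\rho_i]\mathcal B_{\rho_i}$ from that proof. The map $\Phi_i:\mathcal W_i/(\mathcal W_i\cap\mathcal R(\mathcal T))\to\mathcal V_i/\mathcal R_i$ is injective, because its kernel is $\mathcal W_i\cap\mathcal R_i=\mathcal W_i\cap\mathcal R(\mathcal T)$. Its cokernel is $\mathcal V_i/(\mathcal W_i+\mathcal R_i)$, which is isomorphic to $[\rho_i]\mathcal B_{\rho_i}/\pi_i(\mathcal R_i)$. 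So the step inequality reduces to
\[
\dim\bigl(\mathcal B_{\rho_i}/\pi_i(\mathcal R_i)\bigr)\le(\theta(\rho_i)-\omega_{\widehat\iota}(\rho_i))_+\beta^{\mathbf r}(\rho_i).
\]

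Step~(2) of the same proof applies verbatim to any total order. Every vertex of $\Gamma_{\widehat\iota}(\rho_i)$ is either an original mono-vertex of $\rho_i$ or an intersection with an MIS later in the order. Both kinds give relations lying in $\mathcal V_i$. So, taking $\rho_i$ horizontal, $\pi_i(\mathcal R_i)$ contains $J\otimes\mathbb R[t]_{\le m'-r_i-1}$, where $J=\sum_j(s-s_j)^{r_j+1}\mathbb R[s]_{\le m-r_j-1}$. Hence the quotient has dimension at most $\dim(\mathbb R[s]_{\le m}/J)\cdot\dim\mathbb R[t]_{\le m'-r_i-1}$. By Lemma~\ref{lem:univariate-generation} this equals $(m+1-\omega_{\widehat\iota}(\rho_i))_+\,(m'-r_i)_+$, which is the claimed bound, since $\beta^{\mathbf r}(\rho_i)=(m'-r_i)_+$. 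The vertical case follows by interchanging the variables.

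The one point needing care is that the relative vertex sets are computed in the full order $\widehat\iota$, not in an order of $\mathcal C$ alone. The relations at intersections of $\rho_i$ with deleted MISs, which come earlier in $\widehat\iota$, are not counted. This is why the bound uses $\omega_{\widehat\iota}$, and why the remark after Definition~\ref{def:relative-weight} insisted on a total order of all of $\operatorname{MIS}(\mathcal T)$. Since the estimate only uses containments of relation spaces, using fewer relations is harmless.

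I expect no real obstacle beyond checking that the cokernel identification with $\mathcal B_{\rho_i}/\pi_i(\mathcal R_i)$ holds without surjectivity. It does: the composite map $\mathcal V_i\to\mathcal B_{\rho_i}\to\mathcal B_{\rho_i}/\pi_i(\mathcal R_i)$ has kernel $\mathcal W_i+\mathcal R_i$.
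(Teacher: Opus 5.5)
Your proposal is correct and is essentially the paper's proof. Both use the short exact sequences $0\to\mathcal H_{m,m'}^{\mathbf r}(\mathcal A_{i+1};\mathcal T)\to\mathcal H_{m,m'}^{\mathbf r}(\mathcal A_i;\mathcal T)\to[\rho_i]\mathcal B_{\rho_i}/\pi_i(\mathcal R_i)\to 0$ along $\widehat\iota$, and both bound each cokernel by Lemma~\ref{lem:univariate-generation} over $\Gamma_{\widehat\iota}(\rho_i)$ in the full order; the only difference is that you telescope over the tail $\mathcal C=\mathcal A_p$ and so bound $\dim\mathcal H_{m,m'}^{\mathbf r}(\mathcal C;\mathcal T)$ directly, whereas the paper sums over all of $\operatorname{MIS}(\mathcal T)$, notes that the diagonalizable terms vanish, and then invokes Theorem~\ref{thm:cndc-dimension-formula} to pass to $\mathcal C$.
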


\begin{proof}
Write such a total order as
\[
\widehat\iota:\quad
\rho_1\succ\rho_2\succ\cdots\succ\rho_t
\]
and set
\[
\mathcal A_i:=\{\rho_i,\ldots,\rho_t\},\qquad
\mathcal V_i:=\mathcal M(\mathcal A_i),\qquad
\mathcal R_i:=\mathcal V_i\cap\mathcal R(\mathcal T).
\]
Let $\mathcal V_{t+1}=\mathcal R_{t+1}=0$, and let
\[
\pi_i:\mathcal V_i
=
[\rho_i]\mathcal B_{\rho_i}\oplus\mathcal V_{i+1}
\longrightarrow
[\rho_i]\mathcal B_{\rho_i}
\]
be the projection.  The inclusion of $\mathcal V_{i+1}$ and the projection
$\pi_i$ induce a short exact sequence
\[
0\longrightarrow
\mathcal V_{i+1}/\mathcal R_{i+1}
\longrightarrow
\mathcal V_i/\mathcal R_i
\longrightarrow
[\rho_i]\mathcal B_{\rho_i}/\pi_i(\mathcal R_i)
\longrightarrow0.
\]
Indeed,
$\mathcal R_{i+1}=\mathcal V_{i+1}\cap\mathcal R_i$, and a class lies in the
kernel of the last map exactly when a representative can be changed by an
element of $\mathcal R_i$ to lie in $\mathcal V_{i+1}$.

Suppose first that $\rho_i$ is horizontal.  Because
$\widehat\iota$ is a total order of all MISs, every vertex in
$\Gamma_{\widehat\iota}(\rho_i)$ is either an original mono-vertex or the
intersection of $\rho_i$ with an MIS belonging to $\mathcal A_{i+1}$.  Hence
the corresponding one-term or two-term vertex relation lies in
$\mathcal R_i$.  Its projection to the $\rho_i$-summand shows that
$\pi_i(\mathcal R_i)$ contains
\[
[\rho_i]\left(
\sum_{\gamma\in\Gamma_{\widehat\iota}(\rho_i)}
(s-s_\gamma)^{r_h(\gamma)+1}
\mathbb R[s]_{\le m-r_h(\gamma)-1}
\right)
\otimes
\mathbb R[t]_{\le m'-r(\rho_i)-1}.
\]
The univariate dimension formula used in
Theorem~\ref{thm:homological-deletion} therefore gives
\[
\dim
[\rho_i]\mathcal B_{\rho_i}/\pi_i(\mathcal R_i)
\le
\bigl(m+1-\omega_{\widehat\iota}(\rho_i)\bigr)_+
\beta^{\mathbf r}(\rho_i).
\]
For a vertical $\rho_i$, interchanging $s$ and $t$ gives
\[
\dim
[\rho_i]\mathcal B_{\rho_i}/\pi_i(\mathcal R_i)
\le
\bigl(m'+1-\omega_{\widehat\iota}(\rho_i)\bigr)_+
\beta^{\mathbf r}(\rho_i).
\]

Taking dimensions in these short exact sequences and summing over
$i=1,\ldots,t$ gives the corresponding upper bound obtained from all MISs for
\[
\dim\mathcal H_{m,m'}^{\mathbf r}
\bigl(\operatorname{MIS}(\mathcal T);\mathcal T\bigr).
\]
For every
$\rho\in\operatorname{Diag}_{m,m'}^{\mathbf r}(\mathcal T)$, the construction of
$\widehat\iota$ and the observation after Definition~\ref{def:absolute-weight}
give
\[
\omega_{\widehat\iota}(\rho)\ge\theta(\rho).
\]
The corresponding term in the upper bound is therefore zero.  By
Theorem~\ref{thm:cndc-dimension-formula}, the correction space defined using all MISs has
the same dimension as the correction space on $\mathcal C$, so only the displayed
summands indexed by the weighted CNDC remain.  The lower bound is immediate from nonnegativity of
dimension, and substitution into
Theorem~\ref{thm:cndc-dimension-formula} proves the upper bound for the spline dimension.
\end{proof}

\begin{corollary}[Comparison with Mourrain's upper bound]
\label{cor:cndc-mourrain-comparison}
Let $U_{\mathrm{all}}(\widehat\iota)$ denote the sum obtained in the proof
above when the estimate is applied to all
$\operatorname{MIS}(\mathcal T)$, and let
$U_{\mathrm{CNDC}}(\widehat\iota)$ denote the two sums in
Theorem~\ref{thm:cndc-upper-bound}.  For every total order that follows the
reversed deletion history,
\[
U_{\mathrm{CNDC}}(\widehat\iota)
=
U_{\mathrm{all}}(\widehat\iota).
\]
\end{corollary}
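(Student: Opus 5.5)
The plan is to write both quantities explicitly as sums over MISs in the fixed order $\widehat\iota$ and compare them term by term. By the proof of Theorem~\ref{thm:cndc-upper-bound},
\[
U_{\mathrm{all}}(\widehat\iota)
=
\sum_{\rho\in\operatorname{MIS}(\mathcal T)}
\bigl(\theta(\rho)-\omega_{\widehat\iota}(\rho)\bigr)_+\,\beta^{\mathbf r}(\rho),
\]
using $\theta(\rho)=m+1$ for horizontal and $m'+1$ for vertical MISs. Likewise, $U_{\mathrm{CNDC}}(\widehat\iota)$ is the same expression with the sum restricted to $\mathcal C$. Both are computed with the same relative weights $\omega_{\widehat\iota}$ in the full order. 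Since $\operatorname{MIS}(\mathcal T)=\mathcal C\,\dot\cup\,\operatorname{Diag}_{m,m'}^{\mathbf r}(\mathcal T)$, the difference $U_{\mathrm{all}}-U_{\mathrm{CNDC}}$ is exactly the sum of these terms over the diagonalizable part. It therefore suffices to show that each such term vanishes.

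Fix $\rho\in\mathcal Z_k$, removed at stage $(\mathcal A,\mathcal B)$ of Algorithm~\ref{alg:weighted-cndc}. At that stage $\mathcal A$ is the complement of $\mathcal Z_1\cup\cdots\cup\mathcal Z_{k-1}$. The first step is to check that $\operatorname{Mono}_{(\mathcal A,\mathcal B)}(\rho)\subseteq\Gamma_{\widehat\iota}(\rho)$. Take a current mono-vertex $\gamma$ of $\rho$. If $\gamma$ lies on another MIS $\eta$, then $\eta\notin\mathcal A$, so $\eta\in\mathcal Z_j$ for some $j<k$. In $\widehat\iota$ the blocks are listed from $\mathcal Z_N$ down to $\mathcal Z_1$, so $\eta$ comes after $\rho$, and $\gamma$ is not excluded from $\Gamma_{\widehat\iota}(\rho)$. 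If $\gamma$ lies on no other MIS, it is an original mono-vertex and belongs to $\Gamma_{\widehat\iota}(\rho)$ automatically. The order chosen inside $\mathcal Z_k$ and the position of $\mathcal C$ play no role, because MISs in the same pass or in $\mathcal C$ are still in $\mathcal A$, and $\gamma$ lies on none of them.

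Since the multiplicities $\mu_h,\mu_v$ are nonnegative, the inclusion gives
\[
\omega_{\widehat\iota}(\rho)\ge\Omega_{(\mathcal A,\mathcal B)}(\rho)\ge\theta(\rho),
\]
where the second inequality is the deletion test that put $\rho$ into $\mathcal Z_k$. This is the observation recorded after Definition~\ref{def:absolute-weight}. Hence $(\theta(\rho)-\omega_{\widehat\iota}(\rho))_+=0$, and the whole term vanishes regardless of $\beta^{\mathbf r}(\rho)$. Summing over the diagonalizable part gives $U_{\mathrm{all}}(\widehat\iota)=U_{\mathrm{CNDC}}(\widehat\iota)$.

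The only step needing care is the order convention in the inclusion. Definition~\ref{def:relative-weight} places the vertices shared with earlier MISs outside $\Gamma_\iota$, so listing $\mathcal Z_N$ first and $\mathcal Z_1$ last is exactly what makes the MISs deleted before $\rho$ come after $\rho$ in $\widehat\iota$. I would verify this against the two-MIS example of Fig.~\ref{fig:basic-tmesh}. There $\rho_v$ is deleted first, so the reversed order is $\rho_h\succ\rho_v$, and indeed $v_2\in\Gamma_{\widehat\iota}(\rho_h)=\{v_1,v_2\}$.
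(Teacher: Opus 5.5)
Your argument is correct and follows the paper's proof: you verify explicitly the inclusion $\operatorname{Mono}_{(\mathcal A,\mathcal B)}(\rho)\subseteq\Gamma_{\widehat\iota}(\rho)$, which the paper states only as the observation after Definition~\ref{def:absolute-weight}, and this gives $\omega_{\widehat\iota}(\rho)\ge\theta(\rho)$ and hence a zero term for every diagonalizable $\rho$. Your closing sanity check, however, has the convention backwards: $\widehat\iota$ lists $\mathcal C$ last, so in the $(3,3)$ example on Fig.~\ref{fig:basic-tmesh} the order is $\rho_v\succ\rho_h$, and the relevant check is on the deleted segment, $\{v_3,v_4\}\subseteq\Gamma_{\widehat\iota}(\rho_v)=\{v_3,v_2,v_4\}$. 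In that order $v_2\notin\Gamma_{\widehat\iota}(\rho_h)$, which is harmless because $\rho_h\in\mathcal C$.
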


\begin{proof}
Every MIS in the diagonalizable part has relative weight at least
its threshold in any order that follows the reversed deletion history, so its
term in the sum over all MISs vanishes.  The remaining summands are precisely those indexed by
$\mathcal C$.
\end{proof}

\begin{remark}
For a fixed order that follows the reversed deletion history, the bound in
Theorem~\ref{thm:cndc-upper-bound} has the same numerical value as Mourrain's
corresponding bound over all MISs, because every deleted MIS already contributes
zero.  We therefore do not claim a sharper numerical bound.  The advantage is
that the possible nonzero terms are identified in advance: only MISs in the
weighted CNDC can contribute, although their weights are computed in the chosen
order of the full MIS set.
\end{remark}

\begin{corollary}[Empty weighted CNDC and dimensional stability]
\label{cor:cndc-stability}
If
\[
\operatorname{CNDC}_{m,m'}^{\mathbf r}(\mathcal T)=\varnothing,
\]
then, for every
$(\mathcal T',\mathbf r')\in[\mathcal T,\mathbf r]_{\mathrm I}^{+}$,
\[
\dim\mathcal S_{m,m'}^{\mathbf r'}(\mathcal T')
=
\chi\!\left(
\mathcal Q_{m,m'}^{\mathbf r'}((\mathcal T')^\circ)
\right).
\]
Hence the reference pair $(\mathcal T,\mathbf r)$ is dimensionally stable in
the sense of Definition~\ref{def:dimensional-stability}.  Thus an empty weighted CNDC is a directly checkable sufficient condition for stability.
\end{corollary}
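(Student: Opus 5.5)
The plan is to transport emptiness of the weighted CNDC to every member of the structural class, and then read off the dimension from Theorem~\ref{thm:cndc-dimension-formula}. Fix $(\mathcal T',\mathbf r')\in[\mathcal T,\mathbf r]_{\mathrm I}^{+}$. By Definition~\ref{def:structural-class} there is a direction-preserving structural isomorphism $\alpha:\widehat{\operatorname{MS}}(\mathcal T)\to\widehat{\operatorname{MS}}(\mathcal T')$ under which corresponding interior edges carry the same smoothness orders. Theorem~\ref{thm:cndc-structural-invariance} then gives
\[
\operatorname{CNDC}_{m,m'}^{\mathbf r'}(\mathcal T')
=\alpha\!\left(\operatorname{CNDC}_{m,m'}^{\mathbf r}(\mathcal T)\right)
=\alpha(\varnothing)=\varnothing .
\]

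Next, apply Theorem~\ref{thm:cndc-dimension-formula} to the pair $(\mathcal T',\mathbf r')$. For $\mathcal A=\varnothing$ we have $\mathcal M(\varnothing)=0$, hence $\mathcal R_\varnothing(\mathcal T')=0$ and
\[
\mathcal H_{m,m'}^{\mathbf r'}(\varnothing;\mathcal T')=0 .
\]
The correction term therefore vanishes. This yields $\dim\mathcal S_{m,m'}^{\mathbf r'}(\mathcal T')=\chi(\mathcal Q_{m,m'}^{\mathbf r'}((\mathcal T')^\circ))$, which is the displayed identity. Equivalently, one can note that $H_0(\mathcal I_{m,m'}^{\mathbf r'}((\mathcal T')^\circ))=0$ and invoke Corollary~\ref{cor:vanishing-correction}.

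For stability, Lemma~\ref{lem:euler-characteristic-structural-invariance} shows that $\chi(\mathcal Q_{m,m'}^{\mathbf r'}((\mathcal T')^\circ))$ equals $\chi(\mathcal Q_{m,m'}^{\mathbf r}(\mathcal T^\circ))$. The reference pair itself lies in the class, via the identity isomorphism. So every pair in the class has the same spline dimension as $(\mathcal T,\mathbf r)$, i.e.\ $[\mathcal T,\mathbf r]_{\mathrm I}^{+}=[0]_{(\mathcal T,\mathbf r)}$, which is stability by Definition~\ref{def:dimensional-stability}.

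The argument is short, and the only point needing care is the first step: the CNDC must be computed on each realization $\mathcal T'$ with respect to $\mathbf r'$, not merely on $\mathcal T$. This is exactly what Theorem~\ref{thm:cndc-structural-invariance} provides, and it relies on the absolute weights and thresholds depending only on the incidence data and smoothness orders (Proposition~\ref{prop:absolute-weight-structural-invariance}). It does not depend on the line coordinates. Once that is in hand, everything else is direct substitution.
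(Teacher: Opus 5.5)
Your proposal is correct and follows essentially the same route as the paper: Theorem~\ref{thm:cndc-structural-invariance} transports the empty CNDC to $(\mathcal T',\mathbf r')$, Theorem~\ref{thm:cndc-dimension-formula} with $\mathcal H_{m,m'}^{\mathbf r'}(\varnothing;\mathcal T')=0$ gives the identity, and Lemma~\ref{lem:euler-characteristic-structural-invariance} then gives stability. Your explicit remark that $\mathcal M(\varnothing)=0$ makes the correction space vanish fills in a step the paper leaves implicit.
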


\begin{proof}
Let
$(\mathcal T',\mathbf r')\in[\mathcal T,\mathbf r]_{\mathrm I}^{+}$ and choose
a structural isomorphism satisfying Definition~\ref{def:structural-class}.
Theorem~\ref{thm:cndc-structural-invariance} gives
\[
\operatorname{CNDC}_{m,m'}^{\mathbf r'}(\mathcal T')=\varnothing.
\]
Theorem~\ref{thm:cndc-dimension-formula} then gives the displayed equality.  By
Lemma~\ref{lem:euler-characteristic-structural-invariance}, the Euler
characteristic is the same as that of the reference pair.  Thus the spline
dimension is constant on the structural class.
\end{proof}

\section{Conclusion}
\label{sec:conclusion}

In this paper, we have studied the dimension and dimensional stability of polynomial spline spaces over planar T-meshes whose smoothness order is constant along each maximal segment.  The homological dimension formula separates the dimension into the Euler characteristic of a quotient complex and a correction homology term.

The two terms play different roles.  The Euler characteristic term is determined by the numbers of cells, horizontal and vertical interior edges, and interior vertices, together with the ordered bi-degree and the smoothness orders.  It is constant on the structural class.  It also remains fixed during the deletion process, because this process changes neither the T-mesh nor any chain complex; it only removes summands from the quotient representation of the correction homology.

The main result is a weighted deletion theorem for MIS summands.  Repeated deletion produces the weighted CNDC.  This set is independent of the order in which eligible MISs are removed and is the same for corresponding pairs in the structural class.  Hence
\[
\dim \mathcal S_{m,m'}^{\mathbf r}(\mathcal T)
=
\chi\!\left(\mathcal Q_{m,m'}^{\mathbf r}(\mathcal T^\circ)\right)
+
\dim \mathcal H_{m,m'}^{\mathbf r}
\left(
\operatorname{CNDC}_{m,m'}^{\mathbf r}(\mathcal T);\mathcal T
\right).
\]
The first term is still computed on the original T-mesh, while the second term is represented using only the MISs in the weighted CNDC.

The weighted CNDC reduces the number of MIS summands, but it does not determine the correction term by itself.  The remaining relation space is
\[
\mathcal R_{\mathcal C}(\mathcal T)
=
\mathcal M(\mathcal C)\cap\mathcal R(\mathcal T),
\]
so relations induced by the removed summands are retained.  After bases are fixed, dimensional stability is equivalent to constancy of the dimension of the remaining relation space, or equivalently of the rank of any matrix representing these relations.  The example in Section~\ref{sec:decomposition} shows that the weighted CNDC may remain unchanged while this rank and the spline dimension vary.  Thus a nonempty weighted CNDC does not by itself imply either stability or instability.

An empty weighted CNDC gives a direct sufficient condition for dimensional stability.  The upper bound obtained from an order that follows the reversed deletion history has the same numerical value as Mourrain's corresponding bound over all MISs, because the removed MISs contribute zero.  The numerical value is not sharper, but the formula identifies in advance the MISs that may give nonzero terms.

A remaining problem is to identify classes of nonempty weighted CNDCs for which the dimension of the remaining relation space is fixed by the mesh structure and the prescribed smoothness orders.  Such a result would give further stability criteria and may lead to smaller matrices for dimension computation.

\bibliographystyle{plainnat}
\bibliography{references}

\end{document}